\newcommand{\RR}{{\mathbb{R}}}
\newcommand{\ZZ}{{\mathbb{Z}}}
\newcommand{\bF}{{\mathbf F}}
\newcommand{\bC}{{\mathbf C}}
\newcommand{\bG} {\mathbf G}
\newcommand{\bZ}{{\mathbf Z}}
\newcommand{\EC} {\mathcal E}
\newcommand{\SSS}{\mathsf{S}}
\newcommand{\AAA}{\mathsf{A}}
\newcommand{\DDD}{\mathsf{D}}
\newcommand{\cl}{{{\operatorname{cl}}}}
\newcommand{\Irr}{{{\operatorname{Irr}}}}
\newcommand{\Van}{{{\operatorname{Van}}}}
\newcommand{\GL}{\operatorname{GL}}
\newcommand{\PSL}{\operatorname{PSL}}
\newcommand{\SL}{\operatorname{SL}}
\newcommand{\St}{{\mathsf {St}}}
\let\la=\lambda
\let\si=\sigma
\let\ga=\gamma
\newtheorem{thm}{Theorem}[section]
\newtheorem{lem}[thm]{Lemma}
\newtheorem{pro}[thm]{Proposition}
\newtheorem{que}[thm]{Question}
\newtheorem{thml}{Theorem}
\newtheorem{conl}[thml]{Conjecture}
\theoremstyle{definition}
\newtheorem{rem}{Remark}
\numberwithin{equation}{section}
\newcommand{\ua}{{\uparrow}}
\def\Md#1{\text{ }(\text{\rm mod } #1)\,}
\newcommand{\sgn}{\mathrm{sgn}}
\newcommand{\bfd}{\mathbf{d}}
\begin{document}

\title[Common zeros of irreducible characters]
{Common zeros of irreducible characters}

\author[N.\,N. Hung]{Nguyen N. Hung}
\address{Department of Mathematics, The University of Akron, Akron,
OH 44325, USA} \email{hungnguyen@uakron.edu}

\author[A. Moret\'{o}]{Alexander Moret\'{o}}
\address{Departamento de Matem\'aticas, Universidad de Valencia,
46100 Burjassot, Valencia, Spain} \email{alexander.moreto@uv.es}

\author[L. Morotti]{Lucia Morotti}
\address{Mathematisches Institut, Heinrich-Heine-Universit\"{a}t D\"{u}sseldorf,
40225 D\"{u}sseldorf, Germany} \email{lucia.morotti@uni-duesseldorf.de}

\thanks{The second author thanks Noelia Rizo for helpful conversations.
The first author is grateful for the support of an UA
Faculty Research Grant. The research of the second author is
supported by Ministerio de Ciencia e Innovaci\'on (Grant
PID2019-103854GB-I00 funded by MCIN/AEI/ 10.13039/501100011033) and
Generalitat Valenciana CIAICO/2021/163.}

\keywords{Zeros of characters, irreducible characters}

\subjclass[2010]{Primary 20C15, 20C30, 20D06, 20D10}


\begin{abstract}
We study the zero-sharing behavior among irreducible characters of a
finite group. For symmetric groups $\SSS_n$, it is proved that, with
one exception, any two irreducible characters have at least one
common zero. To further explore this phenomenon, we introduce
\emph{the common-zero graph} of a finite group $G$, with non-linear
irreducible characters of $G$ as vertices, and edges connecting
characters that vanish on some common group element. We show that
for solvable and simple groups, the number of connected components
of this graph is bounded above by 3. Lastly, the result for $\SSS_n$
is applied to prove the non-equivalence of the metrics on
permutations induced from faithful irreducible characters of the
group.
\end{abstract}

\maketitle



\section{Introduction}

Studying zeros of characters (and character values in general) is a
fundamental problem in the representation theory of finite groups.
While values of linear characters are never zero, it is a classical
result of Burnside that every non-linear irreducible character
always vanishes at some element. This was improved by Malle,
Navarro, and Olsson \cite{mno} who showed that the element can be
chosen to be of prime-power order.

More recently, Miller \cite{Miller} proved that almost all values of
irreducible characters of the symmetric group $\SSS_n$ are zero as
$n$ increases. Specifically, if $P_n$ is the probability that
$\chi(\sigma)=0$ where $\chi$ is uniformly chosen at random from the
set $\Irr(\SSS_n)$ of irreducible characters of $\SSS_n$ and
$\sigma$ is chosen at random from $\SSS_n$, then $P_n\rightarrow 1$
as $n\rightarrow \infty$. This remarkable observation has inspired
several subsequent results on the abundance of zeros of character
values, see \cite{Gallagher,Larsen-Miller,Ono,Peluse}.

Here we investigate the many-zero phenomenon from a different
perspective: \emph{when do two distinct irreducible characters of a
finite group have a common zero}? For symmetric groups, our answer
is complete.

\begin{thml}\label{main:A}
Let $n\in\ZZ^{\geq 8}$ and $\chi,\psi\in\Irr(\SSS_n)$ both have
degree larger than 1. Then $\chi$ and $\psi$ have a common zero if
and only if $\{\chi(1),\psi(1)\}\neq \{n(n-3)/2,(n-1)(n-2)/2\}$.
\end{thml}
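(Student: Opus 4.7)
My plan is to prove the two directions of the biconditional separately.

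For the exception, i.e., showing $\chi^{(n-2,2)}$ and $\chi^{(n-2,1,1)}$ have no common zero, I would use the classical formulas obtained from the permutation actions of $\SSS_n$ on unordered and ordered pairs of $\{1,\ldots,n\}$: writing $f$ and $c_2$ for the number of fixed points and the number of $2$-cycles of $\sigma$, one has
\[
\chi^{(n-2,2)}(\sigma)=\binom{f}{2}+c_2-f \quad\text{and}\quad \chi^{(n-2,1,1)}(\sigma)=\binom{f-1}{2}-c_2.
\]
Requiring both values to vanish and eliminating $c_2$ yields the equation $f^2-3f+1=0$, which has no integer root. Since $\chi^{(2,2,1^{n-4})}=\sgn\cdot\chi^{(n-2,2)}$ and $\chi^{(3,1^{n-3})}=\sgn\cdot\chi^{(n-2,1,1)}$ have the same zero sets as their conjugate partners, all four pairs with degree multiset $\{n(n-3)/2,(n-1)(n-2)/2\}$ are zero-disjoint.

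For the other direction I would exhibit an explicit common zero by the Murnaghan--Nakayama rule, invoking the fact that $\chi^\lambda(\sigma)=0$ whenever $\sigma$ has a cycle of length $m$ for which $\lambda$ has no $m$-rim-hook. If neither $\lambda$ nor $\mu$ is a hook, both contain a $2\times 2$ block and thus admit no $n$-rim-hook, so the class of $n$-cycles is a common zero. If both are non-trivial hooks $(n-k,1^k)$, a direct enumeration shows the available rim-hook lengths form $\{1,\ldots,k\}\cup\{1,\ldots,n-k-1\}\cup\{n\}$, which misses $n-1$; thus both characters vanish on the $(n-1,1)$-class. A similar verification identifies the non-hook partitions admitting an $(n-1)$-rim-hook as precisely the ``near-hooks'' $(a,2,1^b)$ with $a\geq 2$; so if $\lambda$ is a hook and $\mu$ is a non-hook, non-near-hook, the $(n-1,1)$-class again serves.

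The main obstacle is the pairing of a hook $\lambda=(n-k,1^k)$ with a near-hook $\mu=(a,2,1^b)$. I would enumerate the rim-hook lengths of $\mu$ to obtain the set $\{1,\ldots,\max(a-2,b)\}\cup\{a,b+2,n-1\}$; in particular $\mu$ has no $(n-2)$-rim-hook unless $\mu\in\{(n-2,2),(2,2,1^{n-4})\}$, and no $(n-3)$-rim-hook unless $a=3$ or $b=1$. Matching these against the rim-hook structure of $\lambda$, I would choose a common zero from among $\{(n-2,2),(n-3,1,1,1),(1,2,n-3),(1,3,n-4)\}$: for $2\le k\le n-3$ and $\mu\notin\{(n-2,2),(2,2,1^{n-4})\}$, take $(n-2,2)$; for $3\le k\le n-4$ and $\mu\in\{(n-2,2),(2,2,1^{n-4})\}$, take $(n-3,1,1,1)$; and for $\lambda\in\{(n-1,1),(2,1^{n-2})\}$ (whose zeros are elements with exactly one fixed point) take $(1,2,n-3)$ in most subcases and $(1,3,n-4)$ when $\mu\in\{(n-3,2),(n-3,2,1),(3,2,1^{n-5})\}$. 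The hypothesis $n\ge 8$ ensures all of these classes are valid and distinct. The only combination left without a common zero is exactly $\{\chi(1),\psi(1)\}=\{n(n-3)/2,(n-1)(n-2)/2\}$, matching the exception above.
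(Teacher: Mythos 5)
Your combinatorial core is sound and takes a genuinely different route from the paper. The paper fixes four specific conjugacy classes (inside $\AAA_n$, chosen according to the parity of $n$), uses Murnaghan--Nakayama to enumerate all partitions whose characters do \emph{not} vanish on each, and intersects these lists; that extra precision pays off because it yields a common zero inside $\AAA_n$, which the paper needs for the alternating-group case of its Theorem D. You instead classify partitions by shape (hooks, near-hooks $(a,2,1^b)$, everything else) and use the criterion ``no $m$-rim-hook $\Rightarrow$ vanishing on any class containing an $m$-cycle.'' I checked your rim-hook length sets for $(n-k,1^k)$ and $(a,2,1^b)$ and the witness classes in each sub-case; they work for $n\ge 8$ (the entry ``$(n-3,2)$'' in your final exceptional list is not a partition of $n$ and should be deleted, but nothing depends on it). Your handling of the exceptional pair via the fixed-point/two-cycle formulas, eliminating $c_2$ to reach $f^2-3f+1=0$, is a correct variant of the paper's observation that $\chi^{(n-2,2)}\equiv\chi^{(n-2,1^2)}+1\ (\mathrm{mod}\ 2)$.

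There is, however, one genuine gap. Your case analysis proves the statement at the level of \emph{partitions}: no common zero iff $\{\lambda,\mu\}$ is one of the four pairs formed from $(n-2,2),(2^2,1^{n-4})$ on one side and $(n-2,1^2),(3,1^{n-3})$ on the other. The theorem is stated in terms of \emph{degrees}, and the direction ``$\{\chi(1),\psi(1)\}=\{n(n-3)/2,(n-1)(n-2)/2\}$ implies no common zero'' requires knowing that no \emph{other} partition of $n$ has degree $n(n-3)/2$ or $(n-1)(n-2)/2$; otherwise such a partition paired with, say, $(n-2,1^2)$ would realize the exceptional degree multiset yet have a common zero by your own analysis. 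You assert this identification only implicitly (``matching the exception above''). The paper devotes a separate inductive argument to exactly this point: via the branching rule it shows $\chi^\lambda(1)>(n-1)(n-2)/2$ for every $\lambda$ with $\lambda_1,\lambda_1'\le n-3$, checking $n=9,10$ from tables and treating the rectangular case $\lambda=(a^b)$ separately. You need to supply this (or an equivalent) degree classification to close the proof.
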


\begin{rem}
It is well-known that irreducible characters of $\SSS_n$ are
parameterized by partitions of $n$, and so we use $\chi^\la$ to
denote the character corresponding to a partition $\lambda$. The
characters of degrees $n(n-3)/2$ and $(n-1)(n-2)/2$ are
corresponding to the partitions $(n-2,2)$, $(n-2,1^2)$ and their
conjugates. Since $\chi^{(n-2,2)}\equiv\chi^{(n-2,1^2)}+1\Md{2}$
(see \cite[p. 93]{JamesBook}), $\chi^{(n-2,2)}$ and
$\chi^{(n-2,1^2)}$ indeed do not share common zeros.
\end{rem}

In Section \ref{sec:metric}, we discuss an application of Theorem
\ref{main:A} to a problem concerning the partition equivalence of
character-induced metrics on permutations. A metric on a finite
group $G$ is a binary function $d:G\times G\rightarrow \RR^{\geq 0}$
that assigns a non-negative real number to each pair of group
elements, satisfying the properties of a metric, such as positivity,
symmetry, and the triangle inequality. If $\chi$ is a faithful
character of $G$, then the induced function $\bfd_\chi$ defined by
$\bfd_\chi(a,b):=(\chi(1)- Re (\chi(ab^{-1})))^{1/2}$ is a
$G$-invariant metric on $G$ (see \cite[Section 6D]{Diaconis}). (Here
$Re(z)$ is the real part of a complex number $z$.) Let
$\mathcal{P}(\bfd_\chi)$ be the partition of $G$ determined by the
equivalence relation: $a\thicksim b$ if and only if
$\bfd_\chi(1,a)=\bfd_\chi(1,b)$. Two metrics $\bfd_\chi$ and
$\bfd_\psi$ are called partition equivalent (or
$\mathcal{P}$-equivalent for short) if
$\mathcal{P}(\bfd_\chi)=\mathcal{P}(\bfd_\psi)$ (see \cite{PV23}).
Using Theorem~\ref{main:A}, we prove that the metrics $\bfd_{\chi}$
on permutations induced from the faithful irreducible characters
$\chi$ of the group are pairwise non-$\mathcal{P}$-equivalent.

Zero-sharing behavior for arbitrary groups is harder to understand.
To explore it further, we introduce \emph{the common-zero graph} of
$G$, denoted by $\Gamma_v(G)$: the vertices are the non-linear
irreducible characters of $G$ and two characters are joined by an
edge if they vanish on some common element. Theorem~\ref{main:A}
shows that $\Gamma_v(\SSS_n)$ with $n\geq 8$ is almost complete and,
in particular, connected. This is not true in general. For solvable
groups, we prove

\begin{thml}\label{main:C}
Let $G$ be a finite solvable group. Then the number of connected
components of $\Gamma_v(G)$ is at most $2$.
\end{thml}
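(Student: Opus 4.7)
The plan is to proceed by induction on $|G|$, combining Clifford theory with respect to a minimal normal subgroup with the theorem of Malle--Navarro--Olsson that every non-linear irreducible character of $G$ vanishes on some element of prime-power order.

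The first reduction is to observe that if $\lambda$ is a linear character of $G$ and $\chi \in \Irr(G)$ is non-linear, then $\chi$ and $\lambda\chi$ have identical zero sets and hence share the same connected component of $\Gamma_v(G)$. Consequently, every component is a union of orbits of non-linear characters under multiplication by linear characters, which for a solvable $G$ are typically coarse. This also lets us freely pass between a character and its Galois and linear conjugates when hunting for common zeros.

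Next I would pick a minimal normal subgroup $N$ of $G$, which by solvability is elementary abelian of prime order $p$, and split the non-linear irreducible characters of $G$ into $\mathcal{A}_N = \{\chi : N \subseteq \Ker\chi\}$ and $\mathcal{B}_N = \{\chi : N \not\subseteq \Ker\chi\}$. The set $\mathcal{A}_N$ is identified with the non-linear irreducible characters of $G/N$, so by induction the subgraph induced on $\mathcal{A}_N$ has at most two components in $\Gamma_v(G/N)$ and hence in $\Gamma_v(G)$. For $\chi \in \mathcal{B}_N$, Clifford theory expresses $\chi_N$ as a positive integer multiple of a $G$-orbit sum of non-trivial linear characters of $N$; in the typical case this orbit sum vanishes on some element of $N$, forcing all of $\mathcal{B}_N$ to form a single clique and contributing at most one further component. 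It would then remain to show that this clique is glued to one of the inductive components of $\mathcal{A}_N$, bringing the total component count down to at most two.

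The main obstacle, I expect, is handling the cases where the Clifford step does not yield a common zero in $N$: namely, when $N$ is central in $G$ (so linear characters of $N$ are $G$-invariant), and when the $G$-orbit on $\Irr(N)\setminus\{1_N\}$ covers all non-trivial characters (as can happen for small Frobenius-type configurations). In such cases one would choose $N$ differently, for instance as a non-central minimal normal subgroup or through a chief factor inside the Fitting subgroup $F(G)$, and apply Malle--Navarro--Olsson to select prime-power order vanishing elements outside $N$; chaining characters across these elements should provide the missing gluing. The delicate accounting is in the induction step when a new clique from $\mathcal{B}_N$ combines with the at most two inductive components of $\mathcal{A}_N$: one must exhibit, in each inductive component (possibly after discarding at most one), a character that shares a zero with some character of $\mathcal{B}_N$. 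A small number of base-case groups, and the case where $G/N$ is abelian (so $\mathcal{A}_N$ contributes no vertices) may require direct verification.
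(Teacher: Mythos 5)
Your proposal has the right general flavor (decompose with respect to a normal subgroup, bound the components coming from each piece, then glue), but the two steps that carry all the mathematical weight are asserted rather than proved, and one of them is false as stated.

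First, the claim that $\mathcal{B}_N$ forms a single clique does not follow from the Clifford decomposition. Even when the orbit sum $\sum_i \theta_i$ of the constituents of $\chi_N$ vanishes somewhere on $N$, two characters $\chi,\chi'\in\mathcal{B}_N$ generally lie over \emph{different} $G$-orbits of $\Irr(N)\setminus\{1_N\}$, and the vanishing sets of the two orbit sums inside $N$ need not meet. Worse, the orbit sum frequently has no zeros in $N$ at all: if $N$ is central (which can be unavoidable, e.g.\ when $G$ is a $p$-group with cyclic center, so the unique minimal normal subgroup is central), then $\chi_N=\chi(1)\theta$ never vanishes on $N$; and if the orbit is all of $\Irr(N)\setminus\{1_N\}$ the sum equals $-1$ on $N\setminus\{1\}$. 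Your proposed escape routes (choose a non-central $N$, or a chief factor inside $\bF(G)$) are not always available and are not developed.

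Second, and more fundamentally, the gluing step --- exhibiting a common zero between a character of $\mathcal{B}_N$ and a character in each (or all but one) inductive component of $\mathcal{A}_N$ --- is exactly where the difficulty lies, and the proposal offers no mechanism for it beyond an appeal to Malle--Navarro--Olsson, which only guarantees that each character vanishes \emph{somewhere} of prime-power order, with no control on whether two characters vanish at the same place. The paper's proof supplies precisely this missing mechanism, and it is of a different nature from anything in your outline: it works in the dual graph $\Delta_v(G)$ on vanishing conjugacy classes, runs the induction along the Fitting series rather than a minimal normal subgroup, and the key linking device is the lemma that if all zeros of $\chi$ lie in a normal subgroup $M$ then $\chi_M$ is irreducible (proved via character triples), after which Gallagher's theorem produces $\chi\psi\in\Irr(G)$ vanishing both inside and outside $M$, tying the putative extra component back to the main one. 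A character of $\bF_{n-1}(G)/\Phi$ induced irreducibly to $G$ (Manz--Wolf) serves as the anchor vertex. Without some substitute for this restriction-plus-Gallagher argument, your induction stalls at the bound $3$ rather than $2$, so as it stands the proposal has a genuine gap.
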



We have observed that two irreducible characters tend to have a
common zero if their degrees are not coprime. Although exceptions
exist, they are rare and difficult to find. This suggests that the
well-studied \emph{common-divisor graph} $\Gamma(G)$, whose vertices
are the same non-linear irreducible characters of $G$ and two
vertices are joined if their degrees are not coprime, is somewhat
close to a subgraph of $\Gamma_v(G)$. Manz, Staszewski, and Willems
\cite{msw} proved that $\Gamma(G)$ in fact has at most three
connected components for all groups. This observation together with
Theorems \ref{main:A} and \ref{main:C} suggest the following.

\begin{conl}\label{conj:main} The number
of connected components in $\Gamma_v(G)$  for any finite group $G$
is at most $3$.
\end{conl}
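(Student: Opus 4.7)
My plan is to induct on $|G|$. The two natural base cases are already in hand: Theorem~\ref{main:C} supplies the sharper bound $2$ when $G$ is solvable, and the companion result announced in the abstract yields the bound $3$ for non-abelian finite simple groups. The inductive step will be a Clifford-theoretic reduction through a minimal normal subgroup $N\trianglelefteq G$, organizing $\Irr(G)$ into characters inflated from $G/N$ versus characters lying properly over a non-trivial $\Irr(N)$-orbit.

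The first key ingredient is an \emph{inflation lemma}: if $\bar\chi,\bar\psi\in\Irr(G/N)$ are non-linear and vanish on a common coset $gN\in G/N$, then their inflations $\chi,\psi\in\Irr(G)$ vanish on every element of $gN$, and are therefore adjacent in $\Gamma_v(G)$. Consequently the induced subgraph of $\Gamma_v(G)$ on $\{\chi\in\Irr(G):N\leq\ker\chi,\,\chi(1)>1\}$ has no more components than $\Gamma_v(G/N)$, which is at most $3$ by induction. The second ingredient handles characters $\chi$ with $N\not\leq\ker\chi$ lying over some $\theta\in\Irr(N)$. When the $G$-orbit of $\theta$ is non-trivial, $\chi_N=e\sum_{i=1}^{t}\theta_i$ forces $\chi$ to vanish on any $n\in N$ with $\sum_i\theta_i(n)=0$, producing a set of common zeros shared by every character lying above the same orbit and expected to be non-empty via an orthogonality/counting argument; when $\theta$ is $G$-invariant and $I_G(\theta)<G$, $\chi$ is induced from $I_G(\theta)$ and vanishes off $\bigcup_x I_G(\theta)^x$. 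Either way these families should collapse into at most one or two components, and a further step is needed to fuse them with the components from $G/N$ by exhibiting a product $gn$ on which characters from both sides simultaneously vanish.

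The main obstacle, as in the Manz--Staszewski--Willems analysis of the common-divisor graph, will be the case where $G$ has a non-abelian minimal normal subgroup $N\cong S^k$ that is not simple. Here Clifford theory is delicate: the relevant $\theta$'s are tensor products of characters of the simple factors, the inertia subgroup is a twisted-wreath-like object, and the bound of $3$ for simple groups leaves essentially no slack in the accounting. Matching the at most three components of $\Gamma_v(G)$ to concrete structural features --- one main component of ``generic'' characters together with at most two exceptional families --- will likely be the final hurdle; almost-simple groups $S\leq G\leq\Aut(S)$ with elaborate outer-automorphism behaviour are the natural stress tests, since they are precisely where the simple-group bound of $3$ can be tight.
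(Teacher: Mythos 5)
The statement you are trying to prove is stated in the paper as a \emph{conjecture}, not a theorem: the authors prove the bound $2$ for solvable groups (Theorem~\ref{main:C}) and the bound $3$ for simple groups (Theorem~\ref{main:D}), and offer the general statement only as Conjecture~\ref{conj:main}, supported by the analogy with the Manz--Staszewski--Willems result for the common-divisor graph. So there is no proof in the paper to compare against, and your proposal does not close the gap either: it is an outline whose decisive steps are missing or false.

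Concretely, two points break down. First, the claim that every character lying over a non-trivial $G$-orbit $\{\theta_1,\dots,\theta_t\}\subseteq\Irr(N)$ vanishes on some common element of $N$ because ``$\sum_i\theta_i(n)=0$ \dots\ expected to be non-empty via an orthogonality/counting argument'' is not true in general. Already for $G=\SSS_3$ and $N=\AAA_3$, the orbit sum of a non-trivial linear character of $N$ takes values $2$ and $-1$ and never vanishes, so the degree-$2$ character has no zero in $N$; Lemma~\ref{res} of the paper is in fact a statement in the opposite spirit (if all zeros of $\chi$ lie in $N$ then $\chi_N$ is irreducible), and the paper's solvable-group argument has to work much harder than this, via Fitting series, character triples, and Gallagher's theorem, precisely because common zeros inside a normal subgroup are not automatic. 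Second, even granting both of your ingredients, the accounting does not yield $3$: induction gives up to $3$ components among the characters inflated from $G/N$ and ``one or two'' more among the rest, i.e.\ up to $5$, and the fusion step that would merge them --- exhibiting an element $gn$ on which characters from both families vanish --- is exactly the content of the conjecture and is nowhere supplied. Your closing paragraph correctly identifies the hard case ($N\cong S^k$ non-abelian, almost-simple quotients with tight simple-group bounds), but identifying the obstacle is not the same as overcoming it; as written, the argument establishes nothing beyond what Theorems~\ref{main:C} and~\ref{main:D} already give.
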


The next result provides further evidence for the conjecture.

\begin{thml}\label{main:D}
Let $G$ be a finite simple group. Then the number of connected
components of $\Gamma_v(G)$ is at most $3$.
\end{thml}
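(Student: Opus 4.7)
The plan is to invoke the Classification of Finite Simple Groups and treat the three families—alternating, sporadic, and Lie type—in turn. For alternating groups $\AAA_n$, the values $n \leq 7$ will be dispatched by direct inspection of character tables. For $n \geq 8$, Theorem \ref{main:A} is the key: every irreducible character of $\AAA_n$ arises either as $\chi^\lambda{\da}_{\AAA_n}$ for a non-self-conjugate partition $\lambda$, or as one of two constituents of $\chi^\lambda{\da}_{\AAA_n}$ for a self-conjugate $\lambda$. Any common zero of $\chi^\lambda,\chi^\mu \in \Irr(\SSS_n)$ lying in $\AAA_n$ descends to a common zero of their $\AAA_n$-restrictions, so the bulk of the work reduces to verifying that the common zeros supplied by Theorem \ref{main:A} can be chosen to be even permutations. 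This is plausible in view of Miller's result on the density of zeros, but it will require a careful re-examination of the proof of Theorem \ref{main:A}. Sporadic simple groups and the Tits group will be handled directly via the \textsc{Atlas} and the GAP Character Table Library.

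For groups of Lie type, which constitute the main case, the Steinberg character $\St$ will serve as a primary hub. Recall that $\St$ has degree $|G|_p$ (with $p$ the defining characteristic) and vanishes on every non-semisimple element. Consequently $\St$ is adjacent in $\Gamma_v(G)$ to every character of $p$-defect zero, and more generally to every non-linear character that vanishes at some $p$-singular element; this should absorb a large portion of $\Irr(G)$ into one component. For the remaining non-linear characters, the approach is to exploit Lusztig's partition $\Irr(G) = \bigsqcup_{(s)}\EC(G,(s))$ indexed by semisimple classes $(s)$ of the dual group $G^*$: characters in $\EC(G,(s))$ tend to vanish on regular semisimple elements $t$ whose centralizer torus type is incompatible with $C_{G^*}(s)$. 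Choosing a regular semisimple element in a Coxeter torus, and possibly one further well-chosen maximal torus, should yield secondary hubs that together with $\St$ cluster the remaining characters into at most two additional components.

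The main obstacle will be the Lie type case for small rank and/or small $q$, where the hub strategy can degenerate: cuspidal characters of low-rank groups and certain exceptional low-degree characters may evade both the Steinberg and torus hubs, and the available maximal tori may be too few to witness the necessary vanishing. A case-by-case analysis by Lie type—$A_n$, $\tw2A_n$, $B_n$, $C_n$, $D_n$, $\tw2D_n$, $\tw3D_4$, the exceptional types $G_2, F_4, E_6, \tw2E_6, E_7, E_8$, and the Suzuki--Ree families $\tw2B_2, \tw2G_2, \tw2F_4$—will be required, with the smallest members of each family dispatched by direct GAP computation. A secondary difficulty is to confirm that the isolated pair $\{\chi^{(n-2,2)},\chi^{(n-2,1^2)}\}$ from Theorem \ref{main:A} does not propagate an extra component in $\Gamma_v(\AAA_n)$; this should follow because their $\AAA_n$-restrictions will typically share zeros with many other characters, but must be verified explicitly to preserve the bound of three.
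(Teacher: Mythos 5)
Your architecture (CFSG, then alternating / sporadic / Lie type) is the paper's, and your treatment of the first two families matches it exactly: the paper proves that for $n\geq 9$ any two characters of $\SSS_n$ with a common zero already have one in $\AAA_n$ (this is built into the proof of Theorem~\ref{main:A}, where the vanishing elements produced are of cycle types $(n-k,k)$, $(n-k,k-1,1)$, etc., chosen to be even), and it transfers zeros via the fact that for $\sigma\in\AAA_n$ a constituent of $\chi^\lambda{\downarrow}_{\AAA_n}$ vanishes at $\sigma$ iff $\chi^\lambda$ does. Your worry about the exceptional pair is unfounded: since that is the \emph{only} missing edge, $\Gamma_v(\AAA_n)$ is still complete minus one edge, hence connected. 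Sporadic groups are indeed a GAP check.

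The gap is in the Lie type case, which is the heart of the theorem. Your Steinberg-hub idea does not by itself bound the number of components: $\St$ is adjacent only to characters vanishing at some $p$-singular element, and the characters that fail this form an a priori uncontrolled set; your proposed fix --- secondary hubs given by regular semisimple elements in a Coxeter torus and ``one further well-chosen maximal torus'' such that every remaining character vanishes on one of them --- is precisely the statement that $G$ admits a \emph{strongly orthogonal pair} of classes $(C,D)$ with $\chi(C)\chi(D)=0$ for all $\chi\notin\{\mathbf{1}_G,\St_G\}$. That statement is a theorem, not a plausibility: it is the content of Malle--Saxl--Weigel for classical groups and L\"ubeck--Malle for exceptional groups, and the paper simply cites it (together with Malle--Navarro--Olsson's four-class result) to get the bound $3$ immediately for every type except $\mathrm{P}\Omega_{2n}^+(q)$. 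Your plan would have to reprove all of this case by case, and moreover it does not isolate the one family where the strongly orthogonal pair may genuinely fail to exist, namely $\mathrm{P}\Omega_{2n}^+(q)$; that family is where essentially all of the paper's work goes (primitive prime divisors $\ell(2n-2)$, $\ell(2n-4)$, $\ell(n)$ or $\ell(n-1)$, three explicit maximal tori, defect-zero arguments via Lusztig series and centralizers of semisimple elements, plus separate treatment of the unipotent characters and of $D_4$ and $(n,q)=(5,2)$). As written, your proposal asserts the conclusion of the hard step rather than proving it, so it is not yet a proof; the quickest repair is to cite the strongly orthogonal pairs literature and then concentrate on $\mathrm{P}\Omega_{2n}^+(q)$.
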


The bounds in both Theorems \ref{main:C} and \ref{main:D} are best
possible, as shown by $\SSS_4, \GL_2(3)$ and $\PSL_2(q)$ for several
choices of $q$.


The paper is organized as follows. In Section~\ref{sec:classes}, we
present some preliminary results on the dual graph of $\Gamma_v(G)$
for vanishing conjugacy classes. Section~\ref{sec:symmetric}
contains the proofs of Theorem~\ref{main:A} and the
alternating-group case of Theorem~\ref{main:D}. Theorem~\ref{main:C}
on solvable groups is proved in Section~\ref{sec:solvable}. We then
handle simple groups of Lie type and complete the proof of
Theorem~\ref{main:D} in Section~\ref{sec:Lie type}. The topic of
character-induced metrics on permutations is presented in
Section~\ref{sec:metric}. In the final
Section~\ref{section:relation-character-degree}, we discuss some
relationship between common zeros and character degrees.


\section{The dual graph for vanishing
classes}\label{sec:classes}

For studying the common-zero graph $\Gamma_v(G)$, it is sometimes
convenient to consider the dual one for \emph{vanishing conjugacy
classes}.

As usual, let $\Irr(G)$ denote the set of all (ordinary) irreducible
characters of a finite group $G$ and $\cl(G)$ the set of all
conjugacy classes. An element $x\in G$ is called vanishing if there
exists $\chi\in\Irr(G)$ such that $\chi(x)=0$. Accordingly, a
conjugacy class $K\in \cl(G)$ is called vanishing if it contains a
vanishing element. These concepts were introduced by Isaacs, Navarro
and Wolf in \cite{inw} and since then, they have been studied in
great depth. See the survey paper \cite{dps18} of Dolfi, Pacifici,
and Sanus for more information.

The dual graph we referred to, which we will denote by
$\Delta_v(G)$, has vertices being the vanishing conjugacy classes of
$G$ and two classes are joined if there exists an irreducible
character in $\Irr(G)$ that vanishes simultaneously on both classes.
The common-zero graph $\Gamma_v(G)$ and this $\Delta_v(G)$ have the
same number of connected components, among other things.

\begin{lem}
\label{rel}
Let $G$ be a finite group. Then:
\begin{enumerate}
\item
If  $\mathcal{C}$ is the set of conjugacy classes in one connected
component of $\Delta_v(G)$, then $\{\chi\in\Irr(G)\mid\text{\,
$\chi$ vanishes at some class in $\mathcal{C}$}\}$ is a connected
component in $\Gamma_v(G)$.
\item
If $\mathcal{A}$ is the set of characters in a connected component
in $\Gamma_v(G)$, then $\{K\in\cl(G)\mid\text{\,$\chi(K)=0$ for some
$\chi\in\mathcal{D}$}\}$ is a connected component in $\Delta_v(G)$.
\item
If $f$ is the map from the set of connected components of
$\Delta_v(G)$ to the set of connected components of $\Gamma_v(G)$
defined in (i) and $h$ is the map from the set of connected
components of $\Gamma_v(G)$ to the set of connected components of
$\Delta_v(G)$ defined in (ii), then $f$ and $h$ are bijections and
one is the inverse of the other. In particular, $\Gamma_v(G)$ and
$\Delta_v(G)$ have the same number of connected components.
\item
The difference between the diameter of a connected component
$\mathcal{C}$ of $\Delta_v(G)$ and $f(\mathcal{C})$ is at most one.
\end{enumerate}
\end{lem}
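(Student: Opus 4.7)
The plan is to prove all four parts in parallel using a single bipartite ``vanishing incidence'' relation between $\Irr(G)$ and $\cl(G)$, where $\chi$ and $K$ are incident exactly when $\chi(K)=0$. Edges in $\Delta_v(G)$ correspond to two classes sharing a common character, and edges in $\Gamma_v(G)$ correspond to two characters sharing a common class, so walks in one graph can be translated directly into walks in the other by interleaving witnesses from the incidence.

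For (i), I would set $\mathcal{A}:=\{\chi\in\Irr(G):\chi(K)=0\text{ for some }K\in\mathcal{C}\}$ and show $\mathcal{A}$ is precisely a connected component of $\Gamma_v(G)$. Note that $\mathcal{A}$ lies among the vertex set of $\Gamma_v(G)$, since linear characters never vanish. Given $\chi,\psi\in\mathcal{A}$ with witnesses $K_\chi,K_\psi\in\mathcal{C}$, choose a path $K_\chi=L_0,L_1,\dots,L_d=K_\psi$ in $\Delta_v(G)$ and, for each edge $\{L_i,L_{i+1}\}$, select a witness character $\varphi_i$ vanishing simultaneously on both classes (such a $\varphi_i$ exists by the very definition of an edge in $\Delta_v(G)$). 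Then $\chi,\varphi_0,\varphi_1,\dots,\varphi_{d-1},\psi$ is a walk in $\Gamma_v(G)$, since consecutive terms share a common zero. For maximality, suppose $\chi\in\mathcal{A}$ is adjacent in $\Gamma_v(G)$ to $\chi'$ through a common zero $K$; since $\chi$ also vanishes on some $K_1\in\mathcal{C}$, the classes $K$ and $K_1$ are joined in $\Delta_v(G)$ (witnessed by $\chi$), so $K\in\mathcal{C}$ and hence $\chi'\in\mathcal{A}$. The argument for (ii) is formally identical with the roles of characters and classes interchanged.

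Part (iii) then reduces to showing that $f$ and $h$ are mutually inverse. The inclusion $\mathcal{C}\subseteq h(f(\mathcal{C}))$ is immediate, because every $K\in\mathcal{C}$ is a vanishing class and so supports at least one $\chi\in f(\mathcal{C})$; the reverse inclusion $h(f(\mathcal{C}))\subseteq\mathcal{C}$ is exactly the adjacency-closure argument used to prove maximality in (i). The composition $f\circ h$ is handled analogously.

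Finally, for (iv), a diametric pair $K,K'\in\mathcal{C}$ with $d=\diam(\mathcal{C})$ and the construction in (i) show that for any $\chi,\psi\in f(\mathcal{C})$ vanishing on $K,K'$ respectively, the distance between $\chi$ and $\psi$ in $\Gamma_v(G)$ is at most $d+1$, giving $\diam(f(\mathcal{C}))\leq\diam(\mathcal{C})+1$; the symmetric argument starting from a diametric pair in $f(\mathcal{C})$ gives the reverse inequality. I do not anticipate any genuine obstacle: the lemma is essentially a structural translation between the two graphs, and the only point needing care is the existence of the witness characters used to traverse each edge of $\Delta_v(G)$, which is automatic from the edge definition.
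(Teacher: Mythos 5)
Your proposal is correct and follows essentially the same route as the paper: translate a path in $\Delta_v(G)$ into a walk in $\Gamma_v(G)$ by interleaving witness characters, and establish maximality by the adjacency-closure observation (the paper phrases this as a contradiction, you phrase it directly, but it is the same idea). Your write-up also supplies the details for (iii) and (iv), which the paper omits as ``similar,'' and those details check out.
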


\begin{proof}
Let $\mathcal{A}=\{\chi\in\Irr(G)\mid\text{\, $\chi$ vanishes at
some class in $\mathcal{C}$}\}$. We want to see that if
$\alpha,\beta\in\mathcal{A}$, then there exists a path in
$\Gamma_v(G)$ joining $\alpha$ and $\beta$. Let $C,D\in\mathcal{C}$
such that $\alpha(C)=\beta(D)=0$. Since $\mathcal{C}$ is a connected
component in $\Delta_v(G)$, there exists a path
$$
C=C_0\leftrightarrow C_1\leftrightarrow\cdots\leftrightarrow C_{d-1}\leftrightarrow C_d=D
$$
joining $C$ and $D$. By the definition of the graph $\Delta_v(G)$,
this means that there exist $\chi_i\in\Irr(G)$ such that
$\chi_i(C_{i-1})=\chi_i(C_i)=0$ for $i=1,\dots,d$. Therefore,
$$
\alpha\leftrightarrow\chi_1\leftrightarrow\cdots\leftrightarrow\chi_d\leftrightarrow\beta
$$
is a path in $\Gamma_v(G)$ joining $\alpha$ and $\beta$. This proves
that $\mathcal{A}$ is contained in a connected component of
$\Gamma_v(G)$.

Similarly, we can prove that if $\mathcal{A}$ is the set of
characters in a connected component in $\Gamma_v(G)$, then
$\mathcal{D}=\{K\in\cl(G)\mid\text{\,$\chi(K)=0$ for some
$\chi\in\mathcal{A}$}\}$ is contained in a connected component in
$\Delta_v(G)$.

Now, suppose that $\mathcal{A}\subsetneq\mathcal{B}$, where
$\mathcal{B}$ is a connected component of $\Gamma_v(G)$. Let
$\gamma\in\mathcal{B}-\mathcal{A}$. Therefore, $\gamma(K)\neq0$ for
every $K\in\mathcal{C}$. Since $\alpha,\gamma\in\mathcal{A}$, there
exists a path in $\Gamma_v(G)$ joining $\alpha$ and $\gamma$.  These
means that there exist $\eta,\mu$ in this path that are linked, one
of them belongs to $\mathcal{A}$ and the other does not. Suppose
that $\eta\in\mathcal{A}$ and $\mu\not\in\mathcal{A}$. Therefore,
there exists $D\not\in\mathcal{C}$ such that $\eta(D)=\mu(D)=0$.
Since $\eta\in\mathcal{A}$, there exists $C\in\mathcal{C}$ such that
$\eta(C)=0$. By the definition of $\Delta_v(G)$, $C$ and $D$ are
linked by means of $\eta$. This is a contradiction.

We omit the proof of the remaining parts, which are similar.
\end{proof}


The next result is essential in studying the number of connected
components of our graphs. We write $\Van(\chi)$ for the set of zeros
(or roots) of a character $\chi$. Clearly two characters $\chi$ and
$\psi$ have common zeros if and only if $\Van(\chi)\cap \Van(\psi)
\neq \emptyset$. This explains the relevance of $\Van(\chi)$ in the
current context.

\begin{lem}
\label{res} Let $G$ be a group, $N\trianglelefteq\, G$ and
$\chi\in\Irr(G)$. If $\Van(\chi)\subseteq N$, then
$\chi_N\in\Irr(N)$.
\end{lem}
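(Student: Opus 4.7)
My plan is to combine Clifford's theorem with a suitable form of Burnside's theorem on zeros. By Clifford's theorem write $\chi_N = e(\theta_1 + \cdots + \theta_t)$, where the $\theta_i$ form a single $G$-orbit in $\Irr(N)$ and $e$ is the common multiplicity; our goal becomes showing $e = t = 1$.

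To rule out $t \geq 2$ I would use the Clifford correspondence: the inertia subgroup $T = I_G(\theta_1)$ satisfies $N \leq T \lneq G$, and $\chi = \psi^G$ for some $\psi \in \Irr(T)$. Since induced characters vanish outside the union of conjugates of the inducing subgroup, $\chi$ vanishes on $G \setminus \bigcup_{x \in G} xTx^{-1}$; as this union contains $N$, its complement is disjoint from $N$. Combined with $\Van(\chi) \subseteq N$, the complement must be empty, forcing $G = \bigcup_{x \in G} xTx^{-1}$. This contradicts the classical fact that no finite group is the union of conjugates of a proper subgroup.

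The more delicate case is $t = 1$ with $e \geq 2$; here $\theta := \theta_1$ is $G$-invariant but need not extend to $G$. My plan is to invoke the projective form of Clifford theory: choosing a representation $\rho$ of $N$ affording $\theta$ and extending it to a projective representation $g \mapsto T_g$ of $G$ whose cocycle $\alpha$ factors through $G/N$, the representation affording $\chi$ can be written as $g \mapsto T_g \otimes \pi(g)$, where $\pi$ is an irreducible projective representation of $G/N$ of degree $e$ with cocycle $\alpha^{-1}$. Taking traces yields the factorization $\chi(g) = \mathrm{tr}(T_g)\,\mathrm{tr}(\pi(g))$ for every $g \in G$.

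I would then apply the projective version of Burnside's theorem to $\pi$: lifting $\pi$ through a Schur cover of $G/N$ produces an ordinary irreducible character of degree $e \geq 2$, which vanishes on some element by Burnside; since elements of the Schur kernel act by scalars and therefore contribute nonzero trace, this zero lies over a non-identity element $\bar g_0 \in G/N$. Any lift $g_0 \in G$ then satisfies $g_0 \notin N$ and $\chi(g_0) = \mathrm{tr}(T_{g_0})\cdot 0 = 0$, contradicting $\Van(\chi) \subseteq N$. The main obstacle is precisely this $e \geq 2$ case: Gallagher's theorem is not directly applicable since $\theta$ may fail to extend, and one must either pass through the Schur cover as outlined, or replace the triple $(G, N, \theta)$ by an isomorphic character triple (in the sense of Isaacs) in which $\theta$ does extend and then invoke ordinary Burnside via Gallagher.
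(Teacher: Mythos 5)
Your proposal is correct. The first half is identical to the paper's argument: both use the Clifford correspondence to induce $\chi$ from the inertia group $T=I_G(\theta)$, observe that the induced character vanishes off $\bigcup_{x\in G}xTx^{-1}\supseteq N$, and conclude from $\Van(\chi)\subseteq N$ and the fact that no finite group is a union of conjugates of a proper subgroup that $\theta$ is $G$-invariant, so $\chi_N=e\theta$. For the remaining case $e\geq 2$ the two arguments implement the same reduction with different standard machinery. The paper replaces $(G,N,\theta)$ by an isomorphic character triple $(G^*,N^*,\theta^*)$ with $N^*$ central and $\theta^*$ linear faithful, applies Burnside to the nonlinear $\chi^*$ to get a zero necessarily outside $N^*$, and transfers it back via Navarro's Lemma 5.17(a). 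You instead factor $\chi(g)=\operatorname{tr}(T_g)\operatorname{tr}(\pi(g))$ through the projective form of Clifford theory, lift the degree-$e$ projective representation $\pi$ of $G/N$ to an ordinary irreducible character of a Schur cover, apply Burnside there, and note that the zero cannot sit over the central kernel (where the trace is $e$ times a root of unity), hence descends to a zero of $\chi$ outside $N$. Since character triple isomorphisms onto central extensions are themselves built from exactly this projective-representation data, the two routes are equivalent in substance; yours avoids citing the character triple formalism at the cost of invoking Schur's lifting theorem and checking that zeros of traces survive projective equivalence, while the paper's version outsources all of that bookkeeping to a single quotable lemma. You correctly flag the one genuine pitfall ($\theta$ invariant but not extendible, so Gallagher alone does not apply), and either of your two suggested remedies closes it.
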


\begin{proof}
Let $\theta\in\Irr(N)$ be lying under $\chi$ and let $T=I_G(\theta)$.
By Clifford's correspondence, there exists $\psi\in\Irr(T)$ such that $\psi^G=\chi$.
By the formula for the induced character, $\chi$ vanishes on $G-\bigcup_{g\in G}T^g$.
Since a group is never the union of the conjugates of a proper subgroup, this implies that
$\theta$ is $G$-invariant. Thus $\chi_N=e\theta$
for some positive integer $e$
and we want to see that $e=1$. Arguing by contradiction, assume that $e>1$.

 Let $(G^*,N^*,\theta^*)$ be a character triple isomorphic to
$(G,N,\theta)$ with $N^*\leq\bZ(G^*)$ and $\theta^*$ linear and
faithful. (We refer the reader to Section 5.4 and in particular
Corollary 5.9 of \cite{nav} for background on character triples.)
Let $\chi^\ast\in\Irr(G^\ast\mid \theta^\ast)$ be corresponding to
$\chi$ under the isomorphism. Note that $\chi^*$ is not linear and
$\chi^*_{N^*}=e\theta^*$. Since
$\theta^*$ is linear, $\Van(\chi^*)\cap N^*=\emptyset$. Therefore,
there exists $x^*\in G^*-N^*$ such that $\chi^*(x^*)=0$. Let $x\in
G$ such that \[(Nx)^\ast=N^\ast x^\ast,\] where
$^\ast:G/N\longrightarrow G^\ast/N^\ast$ is the associated group
isomorphism. Note that $x\not\in N$. By Lemma 5.17(a) of \cite{nav},
there exists an algebraic integer $\alpha$ such that
$$\chi(x)=\alpha\chi^\ast(x^\ast)=0,$$
contradicting the hypothesis that $\Van(\chi)\subseteq N$.
\end{proof}

Let $\Van(G)$ denote the set of vanishing elements of $G$, so
$\Van(G)=\bigcup_{\chi\in\Irr(G)}\Van(\chi)$. Although $\Van(G)$ has
been extensively studied in the literature, it is not fully understood
yet. For instance, for solvable groups it is an open conjecture
\cite{inw} that $\Van(G)$ always contains $G-\bF(G)$. The case of
nilpotent groups is known though.

\begin{lem}
\label{vn}
Let $G$ be a nilpotent group. Then $\Van(G)=G-\bZ(G)$.
\end{lem}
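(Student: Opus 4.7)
My plan is to split the proof into three ingredients: the easy inclusion, a reduction to $p$-groups, and an induction within the $p$-group case. First, I would observe that $\bZ(G) \cap \Van(G) = \emptyset$ holds for every finite group because Schur's lemma gives $|\chi(z)| = \chi(1) > 0$ for any $z \in \bZ(G)$ and $\chi \in \Irr(G)$. For the reverse inclusion, I would use the nilpotent decomposition $G = P_1 \times \cdots \times P_k$ into Sylow subgroups: since $\Irr(G)$ consists of outer tensor products and $\bZ(G) = \prod_i \bZ(P_i)$, a non-central $x = (x_1,\ldots,x_k)$ has some $x_i \notin \bZ(P_i)$, and tensoring a vanishing character of $P_i$ with trivial characters of the other factors yields an element of $\Irr(G)$ vanishing on $x$. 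This reduces the problem to the case where $G$ is a nontrivial $p$-group.

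Next, I would prove the $p$-group case by induction on $|G|$, using that $Z := \bZ(G) \neq 1$. Let $x \in G - Z$. If $xZ \notin \bZ(G/Z)$, then inductively some $\bar\chi \in \Irr(G/Z)$ vanishes on $xZ$, and its inflation to $G$ is the desired irreducible character. The remaining case is $x \in \bZ_2(G) - Z$, which is the main obstacle since the naive induction on $G/Z$ no longer produces any vanishing information once $xZ$ becomes central.

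To handle this case I would exploit the central character. Since $[G,x] \leq Z$ and $x \notin Z$, there exists $g \in G$ with $z := [g,x] \in Z \setminus \{1\}$. Rewriting $gxg^{-1} = zx$ and applying Schur's lemma to the central element $z$ gives, for every $\chi \in \Irr(G)$,
\[
\chi(x) \;=\; \chi(gxg^{-1}) \;=\; \chi(zx) \;=\; \lambda_\chi(z)\,\chi(x),
\]
where $\lambda_\chi \in \Irr(Z)$ is the central character of $\chi$. Hence $\chi(x) \neq 0$ forces $\lambda_\chi(z) = 1$. By Clifford theory every $\mu \in \Irr(Z)$ arises as $\lambda_\chi$ for some $\chi \in \Irr(G)$ lying over $\mu$, and since the irreducible characters of the abelian group $Z$ separate points and $z \neq 1$, there is some $\chi \in \Irr(G)$ with $\lambda_\chi(z) \neq 1$, forcing $\chi(x) = 0$. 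This closes the induction and completes the proof.
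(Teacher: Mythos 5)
Your proof is correct. Note that the paper does not actually supply an argument for this lemma: it simply cites Theorem B of Isaacs--Navarro--Wolf \cite{inw}, so your write-up is a self-contained substitute rather than a variant of a proof given in the text. All three of your ingredients check out. The inclusion $\bZ(G)\cap\Van(G)=\emptyset$ is the standard Schur's lemma observation (and holds for all finite groups). The Sylow decomposition correctly reduces to $p$-groups, though it is actually dispensable: your inductive argument only uses that $\bZ(G)\neq 1$ and that quotients inherit the hypothesis, which already hold for nilpotent groups, so you could run the induction on $|G|$ directly. The key case $x\in\bZ_2(G)-\bZ(G)$ is handled correctly: $gxg^{-1}=zx$ for some $1\neq z\in\bZ(G)$ (your commutator convention is harmless), the identity $\chi(x)=\lambda_\chi(z)\chi(x)$ follows from Schur's lemma applied to $z$, and since linear characters of the abelian group $\bZ(G)$ separate points and each of them occurs as the central character of some $\chi\in\Irr(G)$, you can force $\lambda_\chi(z)\neq 1$ and hence $\chi(x)=0$. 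This is essentially the argument underlying the cited result, and it has the mild bonus of showing, for an arbitrary finite group, that every element of $\bZ_2(G)-\bZ(G)$ is vanishing.
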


\begin{proof}
This is \cite[Theorem B]{inw}.
\end{proof}

As we will see throughout, and in particular in Section
\ref{section:relation-character-degree}, there is a somewhat
mysterious relationship between the set of zeros of an irreducible
character and its degree. There are two graphs associated to
character degrees that have been well studied in the literature. The
first is the already mentioned \emph{common-divisor graph}
$\Gamma(G)$. (We note that, in the definition of $\Gamma(G)$, one
could take the character degrees of $G$ instead to be the vertices.
The resulting graph and $\Gamma(G)$ are fundamentally the same.) The
second is the \emph{prime graph} $\Delta(G)$ with vertices being the
primes that divide some character degree of $G$ and two vertices are
joined if the product of the primes divides the degree of some
irreducible character of $G$. The survey paper \cite{lew} of Lewis
is a good place for an overview of the known results on these and
other character-degree related graphs, up until 2008. Many more have
been obtained since then.

We already mentioned that two irreducible characters tend to have a
common zero if their degrees are not coprime. Therefore, $\Gamma(G)$
is often close to a subgraph of $\Gamma_v(G)$, but not always a
subgraph. There are counterexamples among groups of Lie type, like
$\PSL_2(11)$, and among sporadic groups, like $M_{12}$. There are
also solvable counterexamples, the smallest one being ${\tt
SmallGroup}(324, 160)$ in the notation of GAP \cite{gap}. It would
be interesting to understand when $\Gamma(G)$ is a subgraph of
$\Gamma_v(G)$. If $\Gamma(G)$ is a subgraph of $\Gamma_v(G)$, then
the number of connected components of $\Gamma_v(G)$ is at most 3, by
the main result of \cite{msw}.


\section{Alternating and symmetric groups}\label{sec:symmetric}

In this section we prove Theorem \ref{main:A} and the alternating
group case of Theorem~\ref{main:D}. We start by comparing zeros of
irreducible characters of symmetric groups and components of their
restrictions to alternating groups.

\begin{lem}\label{L130123}
Let $\chi\in\Irr(\SSS_n)$ and $\psi\in\Irr(\AAA_n)$ be such that
$\psi$ appears in $\chi_{\AAA_n}$ and let
$\si\in\AAA_n$. Then $\chi(\si)=0$ if and only if $\psi(\si)=0$.
\end{lem}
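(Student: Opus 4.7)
The plan is to analyze the restriction $\chi_{\AAA_n}$ via Clifford theory. Writing $\chi = \chi^\lambda$ for a partition $\lambda \vdash n$, there are two cases: either $\lambda \neq \lambda'$ (the conjugate partition), in which case $\chi_{\AAA_n} = \psi$ is already irreducible; or $\lambda = \lambda'$ is self-conjugate, in which case $\chi_{\AAA_n}$ splits as $\psi^+ + \psi^-$ for two distinct associate irreducible characters of $\AAA_n$, one of which is $\psi$. The first case yields the equivalence $\chi(\sigma)=0\iff\psi(\sigma)=0$ at once, since $\psi(\sigma) = \chi(\sigma)$ identically on $\AAA_n$.

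In the self-conjugate case, $\psi^+$ and $\psi^-$ are swapped by conjugation by any element of $\SSS_n \setminus \AAA_n$, so they must coincide on every $\AAA_n$-class that equals its $\SSS_n$-class, with common value $\chi(\sigma)/2$. Thus the equivalence is immediate on all such $\sigma$. The only classes on which $\psi^+$ and $\psi^-$ may differ are the \emph{split} ones $\{\sigma_+,\sigma_-\}$, whose cycle types necessarily consist of distinct odd parts. A standard consequence of the Murnaghan--Nakayama rule (see, e.g., James--Kerber, Theorem~2.5.13) is that $\psi^+(\sigma_\pm) = \psi^-(\sigma_\pm) = \chi(\sigma)/2$ even on every split class whose cycle type differs from the diagonal-hook partition $h(\lambda) = (h_1,\ldots,h_d)$ of $\lambda$, reducing the claim to a single critical sub-case.

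The critical sub-case is when $\sigma$ has cycle type exactly $h(\lambda)$. Here the classical Frobenius formula gives
\[
\chi^\lambda(\sigma_\pm) = \varepsilon \quad\text{and}\quad \psi^\pm(\sigma_\pm) = \tfrac{1}{2}\bigl(\varepsilon \pm \sqrt{\varepsilon\, h_1 h_2 \cdots h_d}\,\bigr),
\]
with $\varepsilon = (-1)^{(n-d)/2} \in \{\pm 1\}$. Since the $h_i$ are distinct odd positive integers summing to $n$, one checks that $h_1 \cdots h_d \geq 3$ whenever a nontrivial self-conjugate $\lambda \vdash n$ exists. Hence neither $\chi^\lambda(\sigma_\pm)$ nor $\psi^\pm(\sigma_\pm)$ vanishes, and both sides of the asserted equivalence are simultaneously false. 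The main obstacle in the argument is precisely this last sub-case, which forces an appeal to the explicit Frobenius split-class formulas; the numerical nonvanishing then follows from an elementary estimate on products of distinct odd integers, and the remainder of the proof is routine case-checking.
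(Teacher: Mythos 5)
Your proposal is correct and follows essentially the same route as the paper: both reduce to the self-conjugate case via the splitting of $\chi_{\AAA_n}$, invoke James--Kerber Theorem~2.5.13 to see that $\psi$ equals $\chi/2$ off the diagonal-hook class, and then check directly from the value $\chi(\sigma)=(-1)^{(n-d)/2}$ and the split-class formula $\tfrac12(\varepsilon\pm\sqrt{\varepsilon h_1\cdots h_d})$ that neither character vanishes there. The paper merely states this contrapositively and more tersely; your explicit estimate $h_1\cdots h_d\geq 3$ is a correct filling-in of the same final step.
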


\begin{proof}
If $\psi(\si)\not\in\{\chi(\si),\chi(\si)/2\}$, $\chi$ is labeled by
the partition $\la$ and $\mu$ is the cycle partition of $\si$, then
by \cite[Theorems 2.5.7 and 2.5.13]{jk} $\la$ is self-conjugated and $\mu$ is the
partition consisting of the diagonal hook-lengths of $\la$. But in
this case $\chi(\si)\not=0$ by \cite[Corollary 2.4.8]{jk} and
$\psi(\si)\not=0$ by \cite[Theorem 2.5.13]{jk}.
\end{proof}

In the next theorem we show that almost always irreducible characters of $\SSS_n$ have a common zero in $\AAA_n$. It can be checked that the following result is false for small $n$.

\begin{thm}\label{L160123}
Let $n\geq 8$ and $\chi,\psi\in\Irr(\SSS_n)$ both have degree larger than 1. Then
$\chi$ and $\psi$ have no common zero if and only if,
up to multiplying $\chi$ or $\psi$
with $\sgn$, $\{\chi,\psi\}=\{\chi^{(n-2,2)},\chi^{(n-2,1^2)}\}$. If $n\geq 9$ and $\chi$ and $\psi$ have a common zero then they also have a common zero in $\AAA_n$.
\end{thm}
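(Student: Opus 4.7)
The plan is to handle the two directions of the biconditional separately, with the Murnaghan--Nakayama rule serving as the central combinatorial tool throughout.

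For the \emph{if} direction, the parity congruence $\chi^{(n-2,2)}\equiv\chi^{(n-2,1^2)}+1\Md{2}$ recalled in the remark after Theorem~\ref{main:A} immediately forbids a common zero of these two characters. Since $\chi^\lambda\cdot\sgn=\chi^{\lambda'}$ and $\chi^{\lambda'}(\sigma)=\sgn(\sigma)\chi^\lambda(\sigma)$ has the same zero set as $\chi^\lambda$, the same conclusion transfers to the $\sgn$-twisted pair $\{\chi^{(2^2,1^{n-4})},\chi^{(3,1^{n-3})}\}$ and to the two ``mixed'' pairings, exhausting all cases in the exceptional list.

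For the \emph{only if} direction the starting observation is that, by Murnaghan--Nakayama applied to an $n$-cycle, $\chi^\lambda$ vanishes on any $n$-cycle unless $\lambda$ is a hook. Hence \emph{all} non-hook characters share every $n$-cycle as a common zero and already lie in a single connected component. The remaining work splits into: (a) connecting each hook character $\chi^{(n-k,1^k)}$ to this component, and (b) verifying that the exceptional characters $\chi^{(n-2,2)}$, $\chi^{(n-2,1^2)}$ (and their conjugates) share a zero with every \emph{other} non-linear character. For (a) I would exploit the generating-function identity
\[
\sum_{k=0}^{n-1}\chi^{(n-k,1^k)}(\sigma)\,t^k \;=\; \frac{1}{1+t}\prod_i\bigl(1-(-t)^{\mu_i}\bigr),
\]
where $\mu=(\mu_i)$ is the cycle type of $\sigma$, to locate cycle types killing many hook characters simultaneously, and check by direct MN that these classes also kill many non-hook characters. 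For (b) the vanishing classes of $\chi^{(n-2,2)}$ and $\chi^{(n-2,1^2)}$ admit an explicit combinatorial description via MN (and via the two-row character formula of James), and a short case analysis organized by the degree/shape of the partner character matches each with a suitable common zero.

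For the second (alternating) assertion when $n\geq 9$, Lemma~\ref{L130123} reduces vanishing of $\chi\in\Irr(\SSS_n)$ on an even element to vanishing of its $\AAA_n$-constituent. It therefore suffices to arrange that the test elements produced in the previous paragraph can be taken inside $\AAA_n$. For odd $n$ the $n$-cycle itself is even and serves as the common zero of all non-hook characters; for even $n$ I would replace it by an even cycle type such as $(n-1,1)$ or $(n-3,3)$, both of which, by MN, still force vanishing of every non-hook character except possibly one or two exceptional shapes, handled individually. The bound $n\geq 9$ gives just enough flexibility in the available cycle types to carry this out uniformly.

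The main obstacle will be part (b): showing that $\chi^{(n-2,2)}$ and $\chi^{(n-2,1^2)}$ each share a zero with every other non-linear irreducible character while missing only the designated partner is delicate, because their vanishing sets are comparatively small and collide most awkwardly with other small partitions such as $(n-1,1)$, $(n-3,3)$, and $(n-3,1^3)$. A secondary but genuine technical issue is guaranteeing that the common zero is an even permutation \emph{regardless of the parity of $n$}, which is precisely why the alternating statement needs the slightly stronger hypothesis $n\geq 9$.
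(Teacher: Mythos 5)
Your ``if'' direction and your overall toolkit (Murnaghan--Nakayama evaluated at a handful of explicit cycle types) are the same as the paper's, and your opening observation is sound: every non-hook $\chi^\lambda$ vanishes on the $n$-cycle, so any two non-hook characters already share a common zero in $\SSS_n$. But from that point on the proposal has genuine gaps. First, the theorem is a \emph{pairwise} statement -- every pair outside the exceptional one must share an actual common zero -- whereas your step (a) is phrased as ``connecting each hook character to this component,'' i.e.\ as a connectivity claim; a path in $\Gamma_v(\SSS_n)$ does not produce a common zero, so this step, even if carried out, would prove something weaker than the theorem (it would suffice for Theorem~\ref{T280223} but not for Theorem~\ref{main:A}). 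Second, your quantitative claim for the alternating refinement is wrong: no single even cycle type such as $(n-1,1)$ or $(n-3,3)$ kills all non-hook characters ``except possibly one or two exceptional shapes.'' By Murnaghan--Nakayama, $\chi^\gamma(\tau_{(n-1,1)})\neq 0$ exactly when $\gamma$ is obtained by adding an $(n-1)$-hook to $(1)$, i.e.\ for the entire family $(n-h,2,1^{h-2})$, $2\leq h\leq n-2$ -- that is $\Theta(n)$ non-hook exceptions, and the analogous sets for $(n-k,k)$ or $(n-k,k-1,1)$ are likewise linear in $n$. So the ``handle the exceptions individually'' step cannot be executed as described. The paper's proof is built precisely to overcome this: it takes the four classes $\tau_{(n-k,k)}$ (or $\tau_{(n)},\tau_{(n-k,k-1,1)}$ for odd $n$), writes each non-vanishing set as $B_{n,k}\cup C_{n,k}\cup D_{n,k}$, and exploits the pairwise disjointness of the $D_{n,k}$'s and of the $C_{n,k}$'s to force any putative counterexample pair $\{\lambda,\mu\}$ into a short explicit list, each entry of which is then given a bespoke common zero such as $\tau_{(n-5,5)}$, $\tau_{(n-5,2^2,1)}$ or $\tau_{(n-5,4,1)}$.

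Finally, you yourself flag part (b) -- that $\chi^{(n-2,2)}$ and $\chi^{(n-2,1^2)}$ share a zero with every non-linear character other than their designated partner -- as ``the main obstacle'' and offer no argument for it. Since this is exactly where the content of the theorem lies (it is what makes the exceptional pair the \emph{only} exception), the proposal as written is an outline whose hardest steps are missing rather than a proof. To complete it along your lines you would need (i) for every pair involving at least one hook or near-hook partition, an explicit common vanishing class (your generating-function identity for hook characters is a reasonable tool here, but the case analysis must actually be done), and (ii) an intersection argument over several test classes, as in the paper, to replace the false ``only $O(1)$ exceptions per class'' claim.
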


%
%

\begin{proof}
If $\chi=\chi^{(n-2,2)}$ and $\psi=\chi^{(n-2,1^2)}$ (up to exchange or multiplying with $\sgn$) then by
\cite[p. 93]{JamesBook} we have that $\psi\equiv\chi+1\Md{2}$. So in this
case $\chi$ and $\psi$ cannot have common zeros. So we may now
assume that this is not the case.

The cases $n=8$ and $9$ can be checked looking at character tables. So assume now that $n\geq 10$.

Let $\la$ and $\mu$ be the partitions labeling $\chi$ and $\psi$
respectively, so that $\chi=\chi^\la$ and $\psi=\chi^\mu$. Note that
$\la,\mu\not\in\{(n),(1^n)\}$ since $\chi$ and $\psi$ have degree
larger than 1. Further for any partition $\ga\vdash n$ let
$\tau_\ga\in\SSS_n$ have cycle partition $\ga$.

\medskip

{\bf Case 1:} $n$ is even. We have that $\tau_{(n-k,k)}\in\AAA_n$
for any $1\leq k\leq n/2$. For $1\leq k\leq 4$ let
$N_{n,k}:=\{\ga\vdash n|\chi^\ga(\tau_{(n-k,k)})=0\}$ be the set of
partitions labeling characters which do not vanish on
$\tau_{(n-k,k)}$. We then have that $N_{n,k}=B_{n,k}\cup C_{n,k}\cup
D_{n,k}$ with
\begin{align*}
B_{n,1}=&\{(n),(1^n)\},\\
C_{n,1}=&\emptyset,\\
D_{n,1}=&\{(n-h,2,1^{h-2})|2\leq h\leq n-2\},\\
B_{n,2}=&\{(n),(n-1,1),(2,1^{n-2}),(1^n)\},\\
C_{n,2}=&\{(n-2,2),(2^2,1^{n-4})\},\\
D_{n,2}=&\{(n-h,3,1^{h-3})|3\leq h\leq n-3\}\cup\{(n-h,2^2,1^{h-4})|4\leq h\leq n-2\},\\
B_{n,3}=&\{(n),(n-1,1),(n-2,1^2),(3,1^{n-3}),(2,1^{n-2}),(1^n)\},\\
C_{n,3}=&\{(n-3,3),(n-3,2,1),(n-4,2^2),(3^2,1^{n-6}),(3,2,1^{n-5}),(2^3,1^{n-6})\},\\
D_{n,3}=&\{(n-h,4,1^{h-4})|4\leq h\leq n-4\}\cup\{(n-h,3,2,1^{h-5})|5\leq h\leq n-3\}\\
&\cup\{(n-h,2^3,1^{h-6})|6\leq h\leq n-2\},\\
B_{n,4}=&\{(n),(n-1,1),(n-2,1^2),(n-3,1^3),(4,1^{n-4}),(3,1^{n-3}),(2,1^{n-2}),(1^n)\},\\
C_{n,4}=&\{(n-4,4),(n-4,3,1),(n-4,2,1^2),(n-5,3,2),(n-5,2^2,1),(n-6,2^3),\\
&(4^2,1^{n-8}),(4,3,1^{n-7}),(4,2,1^{n-6}),(3^2,2,1^{n-8}),(3,2^2,1^{n-7}),(2^4,1^{n-8})\},\\
D_{n,4}=&\{(n-h,5,1^{h-5})|5\leq h\leq n-5\}\cup\{(n-h,4,2,1^{h-6})|6\leq h\leq n-4\}\\
&\cup\{(n-h,3,2^2,1^{h-7})|7\leq h\leq n-3\}\cup\{(n-h,2^4,1^{h-8})|8\leq h\leq n-2\}.
\end{align*}
To see this note that in view of the Murnaghan-Nakayama formula (see
\cite[\S 2.4.7]{jk}) if $\ga\in N_{n,k}$ then $\ga$ is obtained by
adding a $(n-k)$-hook to a hook-partition of $k$ and that this
condition is also sufficient for $\ga\in N_{n,k}$ since $k\leq
4<n/2$ (so that there is at most one way in which hooks of the given
length can be recursively removed from any given partition of $n$).

So if $\chi$ and $\psi$ have no common zero in $\AAA_n$ then
$\{\la,\mu\}\cap N_{n,k}\not=\emptyset$ for any $1\leq k\leq 4$.
Note that the sets $D_{n,k}$ are pairwise disjoint. The same is true
for the sets $C_{n,k}$ since $n\geq 10$. Assume first that neither
$\la$ nor $\mu$ is in
\[\cup B_{n,k}\setminus\{(n),(1^n)\}=\{(n-1,1),(n-2,1^2),(n-3,1^3),(4,1^{n-4}),(3,1^{n-3}),(2,1^{n-2})\}.\]
Then $\la\in C_{n,k_1}\cap D_{n,k_2}$ and $\mu\in C_{n,k_3}\cap
D_{n,k_4}$ for some pairwise different $1\leq k_i\leq 4$. In
particular $\la,\mu\in\cup_{k=1}^4 C_{n,k}$. In each of these cases
it can be checked that
$\chi^\la(\tau_{(n-5,5)})=\chi^\mu(\tau_{(n-5,5)})=0$.

Next assume that $\la\in\{(n-3,1^3),(4,1^{n-4})\}$. Then $\mu\in
N_{n,1}\cap N_{n,2}\cap N_{n,3}=\{(n),(1^n)\}$, leading to a
contradiction.

If $\la\in\{(n-2,1^2),(3,1^{n-3})\}$, then
\[\mu\in N_{n,1}\cap N_{n,2}=\{(n),(n-2,2),(2^2,1^{n-4}),(1^n)\},\]
so that $\mu\in\{(n-2,2),(2^2,1^{n-4})\}$, which had been excluded
at the beginning of the proof.

If $\la\in\{(n-1,1),(2,1^{n-2})\}$ then $\mu\in
N_{n,1}\setminus\{(n),(1^n)\}$ and so $\mu=(n-h,2,1^{h-2})$ for some
$2\leq h\leq n-2$. In this case
$\chi^\la(\tau_\ga)=\chi^\mu(\tau_\ga)=0$ for some
$\ga\in\{(n-5,2^2,1),(n-6,3,2,1)\}$.

\medskip

{\bf Case 2:} $n$ is odd. In this case
$\tau_{(n)},\tau_{(n-k,k-1,1)}\in\AAA_n$ for $2\leq k\leq(n+1)/2$.
Let $N_{n,0}:=\{\ga\vdash n|\chi^\ga(\tau_{(n)})=0\}$ and for $2\leq
k\leq 4$ let $N_{n,k}:=\{\ga\vdash
n|\chi^\ga(\tau_{(n-k,k-1,1)})=0\}$. We can write
$N_{n,k}=B_{n,k}\cup C_{n,k}\cup D_{n,k}$ with
\begin{align*}
B_{n,0}=&\emptyset,\\
C_{n,0}=&\emptyset,\\
D_{n,0}=&\{(n-h,1^h)|0\leq h\leq n-1\},\\
B_{n,2}=&\{(n),(n-2,2),(2^2,1^{n-4}),(1^n)\},\\
C_{n,2}=&\{(n-1,1),(2,1^{n-2})\},\\
D_{n,2}=&\{(n-h,3,1^{h-3})|3\leq h\leq n-3\}\cup\{(n-h,2^2,1^{h-4})|4\leq h\leq n-2\},\\
B_{n,3}=&\{(n),(n-3,2,1),(3,2,1^{n-5}),(1^n)\},\\
C_{n,3}=&\{(n-2,1^2),(n-4,2^2),(3^2,1^{n-6}),(3,1^{n-3})\},\\
D_{n,3}=&\{(n-h,4,1^{h-4})|4\leq h\leq n-4\}\cup\{(n-h,2^3,1^{h-6})|6\leq h\leq n-2\},\\
B_{n,4}=&\{(n),(n-2,2),(n-3,3),(2^3,1^{n-6}),(2^2,1^{n-4}),(1^n)\},\\
C_{n,4}=&\{(n-3,1^3),(n-4,2,1^2),(n-5,2^2,1),(n-6,2^3),(4^2,1^{n-8}),(4,3,1^{n-7}),\\
&(4,2,1^{n-6}),(4,1^{n-4})\},\\
D_{n,4}=&\{(n-h,5,1^{h-5})|5\leq h\leq n-5\}\cup\{(n-h,3^2,1^{h-6})|6\leq h\leq n-3\}\\
&\cup\{(n-h,2^4,1^{h-8})|8\leq h\leq n-2\}.
\end{align*}
This can be seen again by noting that if $\ga\in N_{n,k}$ then $\ga$
can be obtained by adding a $(n-k)$-hook to a partition
$\overline{\ga}\vdash k$. Further if $2\leq k\leq 4$ then
$\chi^{\overline{\ga}}(\tau_{(k-1,1)})\not=0$. This conditions are
again sufficient for $\ga\in N_{n,k}$ since $k<n/2$.

Again the sets $D_{n,k}$ are pairwise disjoint, the same holds for
the sets $C_{n,k}$ since $n\geq 11$ and we may assume that
$\{\la,\mu\}\cap N_{n,k}\not=\emptyset$ for $k\in\{0,2,3,4\}$.
Assume first that neither $\la$ nor $\mu$ is contained in $\cup
B_{n,k}$. Then similarly to Case 1, $\la,\mu\in\cup C_{n,k}$. In
particular $\chi(\tau_{(n-5,4,1)})=0$ and
$\psi(\tau_{(n-5,4,1)})=0$. So we may now assume that $\la\in(\cup
B_{n,k})\setminus \{(n),(1^n)\}$.

If $\la\in\{(n-2,2),(2^2,1^{n-2})\}$ then $\mu\in N_{n,0}\cap N_{n,3}$
and then $\mu\in\{(n-2,1^2),(3,1^{n-3})\}$, which had been excluded
at the beginning of the proof.

If $\la\in\{(n-3,3),(2^3,1^{n-6})\}$ then again $\mu\in N_{n,0}\cap N_{n,3}$, so $\mu\in\{(n-2,1^2),(3,1^{n-3})\}$ and then
$\chi^\la(\tau_{(n-5,4,1)})=\chi^\mu(\tau_{(n-5,4,1)})=0$.

If $\la\in\{(n-3,2,1),(3,2,1^{n-5})\}$ then $\mu\in N_{n,0}\cap
N_{n,2}\cap N_{n,4}=\{(n),(1^n)\}$, leading to a contradiction.
\end{proof}

We will now prove Theorem \ref{main:A}:

\begin{proof}[Proof of Theorem A]
We will check that $\chi(1)=n(n-3)/2$ if and only if $\chi\in\{\chi^{(n-2,2)},\chi^{(2^2,1^{n-4})}\}$ and that $\chi(1)=(n-1)(n-2)/2$ if and only if $\chi\in\{\chi^{(n-2,1^2)},\chi^{(3,1^{n-3})}\}$. The theorem will then follow by Theorem \ref{L160123}.

The `if' parts are easily checked using the hook formula. The `only if' parts for $n=8$ are also easily checked. So assume now that $n\geq 9$. Note that $\chi^{(n)}(1),\chi^{(1^n)}(1)=1$ and $\chi^{(n-1,1)}(1),\chi^{(2,1^n)}(1)=n-1$. We will show that $\chi^\la(1)>(n-1)(n-2)/2$ for any $\la\vdash n$ with $\la_1,\la_1'\leq n-3$ (with $\la'$ the partition which is conjugated to $\la$), which will conclude the proof of the `only if' parts.

For $n=9$ and $10$ this can easily be checked looking at character
tables. So assume that $n\geq 11$ and that the claim holds for $n-1$
and $n-2$. Then by induction $\chi^\mu(1)\geq (n-1)(n-4)/2$ for any
$\mu\vdash n-1$ with $\mu_1,\mu_1'\leq n-3$ and
$\chi^\nu(1)>(n-3)(n-4)/2$ for any $\nu\vdash n-2$ with
$\nu_1,\nu_1'\leq n-5$. If $\la$ has at least 2 removable nodes $A$
and $B$ then, by the branching rule,
\[\chi^\la(1)\geq\chi^{\la\setminus\{A\}}(1)+\chi^{\la\setminus\{B\}}(1)\geq (n-1)(n-4)>(n-1)(n-2)/2.\]
If $\la$ has only one removable node then $\la=(a^b)$ for some $a,b$ with $ab=n$. Further $2\leq a,b\leq n/2<n-5$. So
\[\chi^\la(1)=
\chi^{(a^{b-2},(a-1)^2)}(1)+\chi^{(a^{b-1},a-2)}(1)>(n-3)(n-4)>(n-1)(n-2)/2,\]
which concludes the proof.
\end{proof}

We will next prove the following result, which implies Theorem \ref{main:D} for alternating groups (if $n=5$ or $6$ it can be checked that $\Gamma_v(\AAA_n)$ has $3$ resp. 2 connected components by looking at character tables):

\begin{thm}\label{T280223}
Let $n\geq 7$ then each of $\Gamma_v(\SSS_n)$, $\Gamma_v(\AAA_n)$, $\Delta_v(\SSS_n)$ and $\Delta_v(\AAA_n)$ is connected. Further $\Gamma_v(\SSS_n)$ and $\Gamma_v(\AAA_n)$ have diameter 2 and $\Delta_v(\SSS_n)$ and $\Delta_v(\AAA_n)$ diameter at most 2.
\end{thm}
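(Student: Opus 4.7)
The proof is driven by Theorem \ref{L160123}, which for $n \geq 8$ pinpoints the only possible non-edges of $\Gamma_v(\SSS_n)$ as pairs drawn from the exceptional four-element set
\[
\EC := \{\chi^{(n-2,2)},\ \chi^{(2^2,1^{n-4})},\ \chi^{(n-2,1^2)},\ \chi^{(3,1^{n-3})}\},
\]
and for $n \geq 9$ additionally guarantees that any such common zero can be located inside $\AAA_n$. I would first dispose of the small cases $n = 7$ (excluded from Theorem \ref{L160123}) and $n = 8$ for the $\AAA_n$-version (which needs $n \geq 9$) directly from the character tables of $\SSS_7$, $\AAA_7$, $\SSS_8$ and $\AAA_8$.

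For $n \geq 8$, the plan is to exhibit a universal neighbor in $\Gamma_v(\SSS_n)$. Set $\xi := \chi^{(n-3,3)}$. A hook-length computation gives $\xi(1) = n(n-1)(n-5)/6$, and the Diophantine equations $n(n-1)(n-5)/6 = n(n-3)/2$ and $n(n-1)(n-5)/6 = (n-1)(n-2)/2$ have no integer solution with $n \geq 8$, so neither $\xi$ nor $\xi \otimes \sgn = \chi^{(2^3,1^{n-6})}$ lies in $\EC$. Theorem \ref{L160123} then forces $\xi$ to share a zero with every other non-linear irreducible character of $\SSS_n$, making $\Gamma_v(\SSS_n)$ connected of diameter exactly $2$. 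For $\Gamma_v(\AAA_n)$ with $n \geq 9$, since $(n-3,3)$ is non-self-conjugate, $\xi_{\AAA_n}$ is irreducible; Lemma \ref{L130123} combined with the $n \geq 9$ part of Theorem \ref{L160123} shows that $\xi_{\AAA_n}$ shares an $\AAA_n$-zero with every other non-linear irreducible character of $\AAA_n$, again providing a universal neighbor. The only surviving non-edge is between the $\AAA_n$-restrictions of $\chi^{(n-2,2)}$ and of $\chi^{(n-2,1^2)}$, as both partitions are non-self-conjugate for $n \geq 5$.

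For the dual graphs $\Delta_v(\SSS_n)$ and $\Delta_v(\AAA_n)$, connectivity is immediate from Lemma \ref{rel}(iii). For the diameter bound, given vanishing classes $C$ and $D$ I would pick non-linear characters $\chi, \psi$ with $\chi(C) = \psi(D) = 0$. If $\{\chi, \psi\}$ is not the exceptional pair of Theorem \ref{L160123}, that theorem produces a common zero class $E$ (chosen inside $\AAA_n$ when $n \geq 9$, which, together with Lemma \ref{L130123}, handles $\Delta_v(\AAA_n)$), yielding the path $C \mathbin{-} E \mathbin{-} D$ in $\Delta_v$. If the pair is forced to be exceptional, one substitutes a non-exceptional non-linear character vanishing on $C$ (or $D$) and re-runs the argument.

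The principal obstacle is exactly this last substitution: one must show that no vanishing class has its entire zero set covered only by $\Van(\chi^{(n-2,2)}) \cup \Van(\chi^{(n-2,1^2)})$ (the other two members of $\EC$ are $\sgn$-twists of these and hence have identical zero loci). I expect this to follow from a direct analysis via the Murnaghan--Nakayama rule: the characters in $\EC$ have degree $O(n^2)$, whereas higher-degree non-exceptional characters such as $\chi^{(n-3,3)}$, $\chi^{(n-3,2,1)}$, $\chi^{(n-4,4)}$, etc.\ have abundant zero sets, making it routine to exhibit, for any prescribed cycle type in the vanishing set, a non-exceptional character vanishing on it. The remaining small $n \in \{7, 8\}$ are verified on the character tables.
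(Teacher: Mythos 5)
Your treatment of $\Gamma_v(\SSS_n)$ and $\Gamma_v(\AAA_n)$ is essentially the paper's: Theorem \ref{L160123} (plus Lemma \ref{L130123} and the ``common zero in $\AAA_n$'' clause for $n\geq 9$) immediately gives connectedness and diameter $2$, and your ``universal neighbour'' $\chi^{(n-3,3)}$ is just a concrete instance of the fact that any character outside the four exceptional ones is adjacent to everything. That part is fine, as is reducing connectivity of the dual graphs to Lemma \ref{rel}.

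The gap is in the $\Delta_v$ diameter bound, precisely at the step you yourself flag as ``the principal obstacle'' and then dismiss as routine. What you need is: for every vanishing element $\sigma\in\SSS_n$ ($n\geq 7$) there is an irreducible character outside $\{\chi^{(n-2,2)},\chi^{(2^2,1^{n-4})}\}$ (equivalently, outside the exceptional quadruple) vanishing at $\sigma$. This is exactly Lemma \ref{L010223} of the paper, and it is the main technical content of the proof of Theorem \ref{T280223}; it is not a routine Murnaghan--Nakayama exercise. The paper proves it in two genuinely different regimes: if $\sigma$ has a cycle of length $\tau_1\geq 4$, it invokes Granville--Ono's theorem on the existence of $\tau_1$-core partitions of $n$ to produce a vanishing character, and then checks separately that this core cannot be $(n-2,2)$ or $(2^2,1^{n-4})$ (with an ad hoc choice $(n-5,5)$ when $\tau_1\geq n-3$). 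If instead all cycles of $\sigma$ have length at most $3$ --- the case your ``abundant zeros of higher-degree characters'' heuristic does not touch --- the argument is delicate: one uses $3$-core considerations when $n-3a\leq 3$, self-conjugate partitions when the number of $2$-cycles is odd, and finally the explicit evaluation $\chi^{(n-2,2)}(\sigma)=b+c(c-3)/2\neq 0$, which combined with the hypothesis that $\sigma$ is a vanishing element forces the witnessing character to be non-exceptional. None of this is supplied, or even sketched, in your proposal; ``it is routine to exhibit, for any prescribed cycle type, a non-exceptional character vanishing on it'' is a restatement of the needed lemma rather than a proof, and for cycle types such as $(3^a,2^b,1^c)$ it is not obvious at all. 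Until this lemma is proved, the diameter-$2$ claim for $\Delta_v(\SSS_n)$ and $\Delta_v(\AAA_n)$ is unestablished (only connectivity follows from Lemma \ref{rel}).
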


Note that by the above theorem (and checking small cases), we have that $\Gamma(\SSS_n)\subseteq\Gamma_v(\SSS_n)$ and $\Gamma(\AAA_n)\subseteq\Gamma_v(\AAA_n)$. In order to prove the theorem we need the following lemma:

\begin{lem}\label{L010223}
Let $n\geq 7$ and $\si\in\SSS_n$ be a vanishing element.
Then there exists
$\chi\in\Irr(\SSS_n)\setminus\{\chi^{(n-2,2)},\chi^{(2^2,1^{n-4})}\}$
with $\chi(\si)\not=0$.
\end{lem}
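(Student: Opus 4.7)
The plan is to produce, for any vanishing $\sigma\in\SSS_n$, a non-linear character $\chi$ outside $\{\chi^{(n-2,2)},\chi^{(2^2,1^{n-4})}\}$ with $\chi(\sigma)\neq 0$ (the trivial character $\chi^{(n)}$ already witnesses the literal statement, but the interesting content is a non-linear witness). The main tool is the hook-character generating identity
\[
\sum_{k=0}^{n-1} t^k\,\chi^{(n-k,1^k)}(\sigma)\;=\;(1+t)^{m_1(\sigma)-1}\prod_{k\geq 2}\bigl(1+(-1)^{k-1}t^k\bigr)^{m_k(\sigma)},
\]
where $m_k(\sigma)$ counts the $k$-cycles of $\sigma$. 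This comes from identifying $\chi^{(n-k,1^k)}$ with the $k$-th exterior power of the standard representation $\chi^{(n-1,1)}$ and computing the eigenvalues of $\sigma$ on the permutation representation. Since the partitions $(n-2,2)$ and $(2^2,1^{n-4})$ are not hooks, any non-linear hook character $\chi^{(n-k,1^k)}$ with $1\leq k\leq n-2$ automatically avoids the excluded set, so it suffices to exhibit one that does not vanish on $\sigma$.

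I would then split on the number of fixed points. If $m_1(\sigma)\neq 1$, the character $\chi^{(n-1,1)}$ satisfies $\chi^{(n-1,1)}(\sigma)=m_1(\sigma)-1\neq 0$ and we are done. Otherwise $m_1(\sigma)=1$; let $k^\ast\geq 2$ be the smallest $k$ with $m_k(\sigma)>0$. The generating function reduces to $\prod_{k\geq 2}(1+(-1)^{k-1}t^k)^{m_k(\sigma)}$, and all factors with $k>k^\ast$ contribute only $1$ to the $t^{k^\ast}$-coefficient, which is therefore $\pm m_{k^\ast}(\sigma)\neq 0$. Hence $\chi^{(n-k^\ast,1^{k^\ast})}(\sigma)\neq 0$, settling this case whenever $k^\ast\leq n-2$.

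The one remaining case, which is the main obstacle, is $m_1(\sigma)=1$ with $k^\ast=n-1$; here $\sigma$ has cycle type $(n-1,1)$ and every non-linear hook character is forced to vanish on $\sigma$. For this single cycle type I would switch to the non-hook character $\chi^{(n-3,2,1)}$ and apply the Murnaghan--Nakayama rule: the unique way to strip a size-$(n-1)$ rim hook from $(n-3,2,1)$ leaving a valid partition is to remove every cell except $(1,1)$, and the resulting rim hook occupies three rows (leg length $2$, sign $+1$), so $\chi^{(n-3,2,1)}(\sigma)=1\neq 0$. For $n\geq 7$ the first row of $(n-3,2,1)$ has length $n-3\geq 4$ and the partition has three rows, so it is visibly distinct from both $(n-2,2)$ and $(2^2,1^{n-4})$; the character has degree $n(n-2)(n-4)/3>1$, completing the proof.
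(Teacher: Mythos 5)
Your argument is internally correct as far as it goes, but it proves the wrong statement: the ``$\chi(\si)\not=0$'' in the lemma as printed is a typo for ``$\chi(\si)=0$''. This is clear both from the paper's own proof (which, for instance, invokes the Granville--Ono theorem to produce a $\tau_1$-core $\la$ of $n$ precisely so that $\chi^{\la}(\si)=0$ by the Murnaghan--Nakayama rule) and from the way the lemma is used in the proof of Theorem \ref{T280223}: there, for vanishing elements $g,h$ one needs characters $\chi,\psi\in\Irr(\SSS_n)\setminus\{\chi^{(n-2,2)},\chi^{(2^2,1^{n-4})}\}$ with $\chi(g)=0$ and $\psi(h)=0$, so that the classes of $g$ and $h$ can be joined through a common zero of $\chi$ and $\psi$, giving connectedness and diameter at most $2$ of $\Delta_v(\SSS_n)$. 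As you yourself note, the literal ``$\neq 0$'' version is already witnessed by the trivial character and carries no content; your strengthening to a non-linear witness (via the hook generating function, and $\chi^{(n-3,2,1)}$ on the class of type $(n-1,1)$, whose value $1$ and degree $n(n-2)(n-4)/3$ you compute correctly) is fine but is simply not the assertion the paper needs, and nothing in your argument addresses that assertion.

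The intended statement --- every vanishing $\si$ is a zero of some irreducible character \emph{outside} the exceptional pair --- requires genuinely different ideas. The paper argues as follows: if $\si$ has a cycle of length $\tau_1\geq 4$, then a $\tau_1$-core partition $\la$ of $n$ exists (Granville--Ono), $\chi^{\la}(\si)=0$ by Murnaghan--Nakayama, and $\la$ can be taken different from $(n-2,2)$ and $(2^2,1^{n-4})$ (automatically if $\tau_1\leq n-4$, and by choosing $\la=(n-5,5)$ if $\tau_1\geq n-3$). If instead $\si$ has cycle type $(3^a,2^b,1^c)$, one either exhibits explicit vanishing characters (a character whose $3$-core has size at least $4$ when $n-3a\leq 3$, or a self-conjugate $\la$ when $b$ is odd), or, in the remaining cases, computes $\chi^{(n-2,2)}(\si)=b+c(c-3)/2\neq 0$ (and hence $\chi^{(2^2,1^{n-4})}(\si)\neq 0$ after twisting by $\sgn$), so that any character vanishing at $\si$ --- one exists because $\si$ is a vanishing element --- automatically lies outside the excluded pair. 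None of these steps appears in your proposal, so with respect to the result actually required (and actually proved) in the paper there is a genuine gap: the target statement was misread, and the proof of the correct target is missing.
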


\begin{proof}
For $n\leq 9$ the result can be proved looking at character tables.
So assume from now on that $n\geq 10$.

Let $\tau$ be the cycle partition of $\si$. If $\tau_1\geq 4$ then
there exists a $\tau_1$-core $\la$ of $n$ by \cite[Theorem 1]{GO}.
In particular $\chi^\la(\si)=0$ in view of the Murnaghan-Nakayama
formula. If $\tau_1\leq n-4$ then neither $(n-2,2)$ nor
$(2^2,1^{n-4})$ is a $\tau_1$-core. If $\tau_1\geq n-3$ then we can
take $\la=(n-5,5)$ as $\tau_1$-core.

So we may now assume that $\tau=(3^a,2^b,1^c)$ for some $a,b,c\geq
0$ with $3a+2b+c=n$. Taking $\la=(n-4,2,1^2)$, $(n-1,1)$ or
$(n-3,1^2)$ depending on whether $n$ is congruent to 0, 1 or 2
modulo 3 respectively, we may assume that $n-3a\geq 4$ (since for this choice of
$\la$ we have that $|\la_{(3)}|\geq 4$, so that $\chi^\la(\si)=0$ if
$n-3a\leq 3$).

If $b$ is odd then $\chi^\la(\tau)=0$ for any $\la=\la'$, in particular for $\la=(n/2,2,1^{n/2-2})$ or $((n+1)/2,1^{(n-1)/2})$ depending on the parity of $n$, since then $\chi^\la=\chi^\la\cdot\sgn$.
So we may assume that $b$ is even.

Using the determinantal formula we have that
\[\chi^{(n-2,2)}=1\ua_{\SSS_{n-2,2}}^{\SSS_n}-1\ua_{\SSS_{n-1}}^{\SSS_n},\]
where $\SSS_{n-2,2}\cong\SSS_{n-2}\times\SSS_2$ is a maximal Young subgroup.
We will now show that $\chi^{(n-2,2)}(\si)\not=0$. Multiplying with
$\sgn$ this also gives $\chi^{(2^2,1^{n-4})}(\si)\not=0$, so that
the lemma follows.

By the above formulas we have that
\[\chi^{(n-2,2)}(\si)=b+\binom{c}{2}-c=b+\frac{c(c-3)}{2}\]
So $\chi^{(n-2,2)}(\si)\not=0$ unless $(b,c)\in\{(0,0),(0,3),(1,1),(1,2)\}$. Each of these choices contradicts $2b+c=n-3a\geq 4$ or $b$ even.
\end{proof}

\begin{proof}[Proof of Theorem \ref{T280223}]
For $n=7$ or $8$ the theorem can be easily checked. So assume that $n\geq 9$. By  Theorem \ref{L160123} we have that $\Gamma_v(\SSS_n)$ is connected with diameter 2. Further if $g,h\in\SSS_n$ are vanishing elements, then by Lemma \ref{L010223} there are $\chi,\psi\in\Irr(\SSS_n)\setminus\{\chi^{(n-2,2)},\chi^{(2^2,1^{n-4})}\}$ with $\chi(g)=0$ and $\psi(h)=0$. Since $\chi$ and $\psi$ have a common zero, it follows that $\Delta_v(\SSS_n)$ is connected and that it has diameter at most 2.

Let now $\chi,\psi\in\Irr(\AAA_n)$ have degree larger than 1. Then $\chi$ and $\psi$ have a common zero by Lemma \ref{L130123} and Theorem \ref{L160123} if and only if $\{\chi,\psi\}\not=\{(\chi^{(n-2,2)})_{\AAA_n},(\chi^{(n-2,1^2)})_{\AAA_n}\}$ (note that $(\chi^{(n-2,2)})_{\AAA_n}$ and $(\chi^{(n-2,1^2)})_{\AAA_n}$ are both irreducible by \cite[Theorem 2.5.7]{jk}). Further any vanishing element $g\in\AAA_n$ is a zero of some irreducible character $\not=(\chi^{(n-2,2)})_{\AAA_n}$ by Lemmas \ref{L130123} and \ref{L010223}. We can then conclude as in the $\SSS_n$ case.
\end{proof}

\section{Solvable groups}\label{sec:solvable}


Here we prove Theorem \ref{main:C}. We begin with the easy case of
nilpotent groups.

\begin{thm}
\label{pgp} Let $G$ be a nilpotent group. Then any two non-linear
irreducible characters of $G$ share a common zero. Equivalently,
$\Gamma_v(G)$ is complete.
\end{thm}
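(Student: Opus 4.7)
\textbf{Proof proposal for Theorem \ref{pgp}.}

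The plan is to show that every non-linear $\chi \in \Irr(G)$ of a nilpotent group $G$ vanishes off some maximal subgroup $M_\chi \trianglelefteq G$. Once this is established, two non-linear characters $\chi, \psi$ share the entire zero set $G - (M_\chi \cup M_\psi)$, which is non-empty because a nilpotent group cannot be covered by two proper subgroups of prime index.

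For the vanishing step I would invoke Taketa's theorem: every nilpotent group is an $M$-group, so every irreducible character is induced from a linear character of some subgroup. Writing $\chi = \theta^G$ with $\theta$ a linear character of $H_\chi \leq G$, we have $[G:H_\chi] = \chi(1) > 1$, so $H_\chi$ is proper. The induced-character formula forces $\chi$ to vanish on $G - \bigcup_{g \in G} g H_\chi g^{-1}$. Now choose any maximal subgroup $M_\chi$ of $G$ containing $H_\chi$; since maximal subgroups of a nilpotent group are normal of prime index, we have $\bigcup_g g H_\chi g^{-1} \subseteq M_\chi$, and hence $\Van(\chi) \supseteq G - M_\chi$.

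Running the same procedure for $\psi$ yields $M_\psi$ with $\Van(\psi) \supseteq G - M_\psi$. To see $M_\chi \cup M_\psi \neq G$: if the two subgroups coincide this is clear, and otherwise, setting $p = [G:M_\chi]$ and $q = [G:M_\psi]$ (both primes), normality forces $M_\chi M_\psi = G$, so $|M_\chi \cap M_\psi| = |G|/(pq)$, giving
\[
|M_\chi \cup M_\psi| = \frac{|G|}{p} + \frac{|G|}{q} - \frac{|G|}{pq} = |G|\cdot \frac{p+q-1}{pq} < |G|,
\]
the final inequality holding since $(p-1)(q-1) > 0$ for all primes $p, q$. Hence any $g \in G - (M_\chi \cup M_\psi)$ is a common zero of $\chi$ and $\psi$; note that such a $g$ is automatically non-central, consistent with Lemma \ref{vn}.

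There is no substantial obstacle here: the two non-trivial ingredients, Taketa's monomiality theorem and the normality of maximal subgroups in nilpotent groups, are classical facts. The only subtle point is the observation that an induced character from a subgroup $H$ automatically vanishes off every normal subgroup containing $H$, in particular off any maximal subgroup of $G$ containing $H$; it is this \emph{normal overgroup} trick (rather than the inducing subgroup itself, which need not be normal) that reduces the problem to the elementary two-maximal-subgroups count.
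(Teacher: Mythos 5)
Your argument is correct and follows essentially the same route as the paper's proof: monomiality of nilpotent groups, normality of maximal subgroups (so the induced character vanishes off a normal maximal overgroup), and the fact that a group is never the union of two proper subgroups. One small attribution slip: the monomiality of nilpotent groups is not Taketa's theorem (which is the converse statement, that $M$-groups are solvable) but the classical result that nilpotent, indeed supersolvable, groups are $M$-groups.
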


\begin{proof}
Let $\chi,\psi\in\Irr(G)$ be non-linear. Since nilpotent groups are
monomial and maximal subgroups of nilpotent groups are normal, we
have that there exist normal maximal  subgroups $M$ and $N$ of $G$
such that $\chi$ is induced from $M$ and $\psi$ is induced from $N$.
By the character-induction formula, both characters vanish outside
$M\cup N$. Since $G\neq M\cup N$ by comparing orders, the result
follows.
\end{proof}

Theorem \ref{pgp} fails when the group is not nilpotent. The group
$\SL_2(3)$ is already a counterexample. In fact,
$\Gamma_v(\SL_2(3))$ is disconnected. We also note that the graph
$\Delta_v(G)$ (defined in Section \ref{sec:classes}) for $G$
nilpotent does not need to be complete, as the dihedral group
$\DDD_{16}$ shows.

Next, we consider nilpotent-by-abelian groups. As usual, $\bF(G)$
denotes the Fitting subgroup of $G$.

\begin{lem}
\label{ro} Let $G$ be a finite group such that $G/\bF(G)$ is
abelian. Then there exists $\lambda\in\Irr(\bF(G))$ such that
$\chi=\lambda^G\in\Irr(G)$. In particular, $\chi$ vanishes on
$G-\bF(G)$ and all the $G$-classes in $G-\bF(G)$ are linked in
$\Delta_v(G)$.
\end{lem}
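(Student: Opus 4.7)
\emph{The plan} is to produce a single irreducible character $\chi=\lambda^{G}$, with $\lambda\in\Irr(F)$ and $F:=\bF(G)$, that is irreducible and vanishes identically on $G\setminus F$. Once such a $\chi$ is in hand, both conclusions of the lemma follow at once: the first is the vanishing of $\chi$ on $G\setminus F$, and the second follows because $\chi$ is a common zero of every pair of vanishing $G$-classes lying outside $F$, so those classes form a clique in $\Delta_v(G)$.

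\emph{Step 1 (setup).} I would first record that $G$ is solvable, since $F$ is nilpotent and $G/F$ is abelian. Hence the standard Fitting-subgroup fact $\cent{G}{\bF(G)}\leq \bF(G)$ applies, giving $\cent{G}{F}\leq F$, and therefore the natural action of $G/F$ on $\Irr(F)$ is faithful.

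\emph{Step 2 (the character).} The main task is to exhibit $\lambda\in\Irr(F)$ with inertia group $I_G(\lambda)=F$: by Clifford's theorem this is exactly the condition for $\lambda^{G}$ to be irreducible. Using the nilpotent product decomposition $F=\prod_{p}P_p$ and the compatible $G$-equivariant decomposition $\Irr(F)=\prod_{p}\Irr(P_p)$, it suffices to choose, for each prime $p$, a character $\lambda_p\in\Irr(P_p)$ such that $\bigcap_{p}I_G(\lambda_p)=F$, and then set $\lambda=\prod_{p}\lambda_p$. This amounts to finding a regular orbit of the (faithful, abelian) action of $G/F$ on $\Irr(F)$.

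\emph{Step 3 (consequences).} Once $\chi=\lambda^{G}\in\Irr(G)$ is constructed, the standard induced-character formula together with the normality of $F$ in $G$ gives
\[
\chi(g)\;=\;\frac{1}{|F|}\sum_{\substack{x\in G\\ xgx^{-1}\in F}}\lambda(xgx^{-1})\;=\;0\qquad (g\in G\setminus F),
\]
since no $G$-conjugate of $g\notin F$ lies in $F$. This is the first assertion; the statement about $\Delta_v(G)$ then follows because every vanishing $G$-class in $G\setminus F$ is a zero of the single character $\chi$, so any two such classes are linked via $\chi$.

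\emph{The main obstacle} is Step~2, namely producing a regular orbit for the faithful action of the abelian group $G/F$ on $\Irr(F)$. I would expect to handle it either by a prime-by-prime analysis on the factors $\Irr(P_p)$ combined with an orbit-counting argument (for instance, Brauer's permutation lemma to pass between $G$-orbits on $F$ and $G$-orbits on $\Irr(F)$), or by invoking a regular-orbit theorem for abelian actions that is adapted to our nilpotent-by-abelian setting. Everything else reduces to routine applications of Clifford theory and the induced-character formula.
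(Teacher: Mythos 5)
Your Steps 1 and 3 are unproblematic: once a $\lambda\in\Irr(\bF(G))$ with $\lambda^G\in\Irr(G)$ exists, the induced-character formula gives the vanishing on $G-\bF(G)$ and the clique in $\Delta_v(G)$, exactly as in the paper. The gap is in Step 2, which is the entire content of the lemma, and your sketch for it does not work as stated. First, the inference ``$\cent{G}{F}\leq F$, therefore $G/F$ acts faithfully on $\Irr(F)$'' is not valid: an element can fix every irreducible character of $F$ (equivalently, by Brauer's permutation lemma, every conjugacy class of $F$) while inducing a non-inner, class-preserving automorphism, so faithfulness on $F$ does not transfer to $\Irr(F)$ by itself. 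Second, and more seriously, the regular orbit you need is not supplied by any off-the-shelf theorem: $\Irr(F)$ is not a module when $F$ is nonabelian, the action of $G/F$ is not coprime, and mere faithfulness of an abelian action does not force a regular orbit --- for instance a Klein four subgroup of $\operatorname{Aut}(C_4\times C_2)$ acts faithfully on $C_4\times C_2$ with all orbits of size at most $2$. So complete reducibility (or some substitute) is essential, and your prime-by-prime plan with orbit counting gives no mechanism for arranging $\bigcap_p I_G(\lambda_p)=F$.

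The missing idea, which is how the proof of Lemma 18.1 of Manz--Wolf (the source the paper cites) actually runs, is to pass to the Frattini quotient. By Gasch\"utz's theorem, $\bF(G)/\Phi(G)=\bF(G/\Phi(G))$ is complemented and self-centralizing in $G/\Phi(G)$, so the abelian group $G/F$ acts faithfully and completely reducibly on the abelian group $V:=F/\Phi(G)$, hence also on its dual $\Irr(V)$. The regular orbit theorem for abelian groups acting faithfully and completely reducibly then produces a linear character $\mu\in\Irr(V)$ with stabilizer exactly $F$; viewing $\mu$ as $\lambda\in\Irr(F)$ with $\Phi(G)\leq\ker\lambda$, Clifford's correspondence gives $\lambda^G\in\Irr(G)$. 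In particular the desired character can (and in this argument must) be taken linear and trivial on $\Phi(G)$. Note also that the faithfulness of $G/F$ on $\Irr(F)$ that you asserted is really a consequence of this Frattini/Gasch\"utz reduction (an element fixing all linear characters of $F$ acts trivially on $F/F'$, hence on $F/\Phi(G)$, hence lies in $F$), not of $\cent{G}{F}\leq F$ alone. Without this reduction your Step 2 remains unproved, so the proposal as written has a genuine gap at the crucial point.
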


\begin{proof}
This follows from the proof of Lemma 18.1 of \cite{mw}.
\end{proof}

With a slight abuse of language, sometimes we say that $x,y\in G$
are linked in $\Delta_v(G)$ to mean that the conjugacy classes of
$x$ and $y$ are linked in $\Delta_v(G)$, or that $x$ belongs to a
connected component of $\Delta_v(G)$ to mean that the class of
$x$ belongs to that connected component.

\begin{lem}
\label{abel} Let $G$ be a finite group such that $G/\bF(G)$ is
abelian. Then $\Delta_v(G)$ has at most two connected components,
one of which contains all the classes in $G-\bF(G)$. If there are
two connected components, then the second one contains all the
vanishing classes in $\bF(G)-\bZ(\bF(G))$.
\end{lem}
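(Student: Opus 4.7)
The plan is to pin down the component $\mathcal{C}_1$ containing $G-\bF(G)$ via Lemma~\ref{ro}, and then use Lemmas~\ref{res} and~\ref{vn} together with Theorem~\ref{pgp} to confine any remaining component. If $G$ is nilpotent, then $G-\bF(G)=\emptyset$ and Theorem~\ref{pgp} together with Lemma~\ref{rel}(iii) immediately yield at most one component of $\Delta_v(G)$, so the statement is vacuous; assume henceforth that $\bF(G)\lneq G$. Lemma~\ref{ro} supplies $\chi=\lambda^G\in\Irr(G)$ vanishing on every element of $G-\bF(G)$, so every class of $G-\bF(G)$ is vanishing and any two of them are linked by $\chi$ in $\Delta_v(G)$. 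These classes thus form a single component $\mathcal{C}_1$.

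Next I would show that every vanishing class $K$ outside $\mathcal{C}_1$ satisfies $K\subseteq\bF(G)-\bZ(\bF(G))$. Choose any $\psi\in\Irr(G)$ with $\psi(K)=0$; if $\psi$ had a zero in $G-\bF(G)$ it would link $K$ to a class in $\mathcal{C}_1$, which is forbidden. Hence $\Van(\psi)\subseteq\bF(G)$, and Lemma~\ref{res} yields $\psi_{\bF(G)}\in\Irr(\bF(G))$. This restriction is non-linear since it still vanishes on $K$, and Lemma~\ref{vn} applied to the nilpotent group $\bF(G)$ forces its zeros to lie in $\bF(G)-\bZ(\bF(G))$, so in particular $K\subseteq\bF(G)-\bZ(\bF(G))$.

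Finally, I would link any two such vanishing classes $K_1,K_2$ outside $\mathcal{C}_1$ into a single component. Each yields $\psi_i\in\Irr(G)$ whose restriction to $\bF(G)$ is a non-linear irreducible character of the nilpotent group $\bF(G)$. Theorem~\ref{pgp} applied to $\bF(G)$ then produces a common zero $y\in\bF(G)-\bZ(\bF(G))$ of $(\psi_1)_{\bF(G)}$ and $(\psi_2)_{\bF(G)}$, and since $\psi_i(y)=(\psi_i)_{\bF(G)}(y)=0$, the class of $y$ is joined in $\Delta_v(G)$ to both $K_1$ and $K_2$ through $\psi_1$ and $\psi_2$. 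Thus all such classes sit in a single further component $\mathcal{C}_2$, whose vertices by construction are vanishing classes in $\bF(G)-\bZ(\bF(G))$, as required. The most delicate step I anticipate is the application of Lemma~\ref{res}, which requires first ruling out any zero of $\psi$ outside $\bF(G)$ using only the assumption $K\not\in\mathcal{C}_1$; once that is in hand, the rest is a direct combination of the tools already assembled in the paper.
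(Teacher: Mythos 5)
Your proof is correct and follows essentially the same route as the paper: Lemma~\ref{ro} produces the component containing $G-\bF(G)$, Lemma~\ref{res} together with Lemma~\ref{vn} places any stray vanishing class inside $\bF(G)-\bZ(\bF(G))$ via an irreducible (necessarily non-linear) restriction, and Theorem~\ref{pgp} glues all such classes into one further component. The only cosmetic difference is that you avoid the paper's case split on whether $\bF(G)$ is abelian by observing directly that the restriction $\psi_{\bF(G)}$ must be non-linear since it vanishes somewhere, which handles that case automatically.
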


\begin{proof}
By Lemma \ref{ro}, all the classes in $G-\bF(G)$ are linked. Let
$\Delta_1$ be the connected component containing these classes.
Suppose first that $\bF(G)$ is abelian. Then Lemma~\ref{res} implies
that, for any $\chi\in\Irr(G)$ non-linear,
$\Van(\chi)\not\subseteq\bF(G)$. Therefore, all the vanishing
classes are linked to some class in $G-\bF(G)$, which implies that
$\Delta_v(G)$ is connected.

Suppose now that $\bF(G)$ is not abelian and that $\Delta_v(G)$ is
not connected. Let $\Delta_1,\dots,\Delta_t$ be the connected
components of the graph, where $\Delta_1$ contains all classes in
$G-\bF(G)$. So all the classes in $\Delta_2,\dots,\Delta_t$ are
contained in $\bF(G)$. Let $y$ be a representative of a class in one
of these components. Therefore, $y$ is a vanishing element and for
every $\chi\in\Irr(G)$ satisfying $\chi(y)=0$,
\[\chi_{\bF(G)}\in\Irr(\bF(G)),\] by Lemma \ref{res}. In
particular, $y$ is a vanishing element of $\bF(G)$, i.e.,  all the
classes in $\Delta_2,\dots,\Delta_t$ are contained in
$\bF(G)-\bZ(\bF(G))$. Suppose that $y_1,y_2\in \bF(G)-\bZ(\bF(G))$
lie in vanishing classes. Let $\chi_i\in\Irr(G)$ such that
$\chi_i(y_i)=0$. Therefore,
\[\varphi_i:=(\chi_i)_{\bF(G)}\in\Irr(\bF(G)).\] By Theorem \ref{pgp},
there exists $t\in\bF(G)$ such that $\varphi_i(t)=0$ for $i=1,2$.
But then $y_1$ and $t$ are linked by means of $\chi_1$ and $t$ and
$y_2$ are linked by means of $\chi_2$. Thus $y_1$  and $y_2$ belong
to the same connected component.  It follows that all the vanishing
classes in $\bF(G)$ belong to the same connected component. The
result follows.
\end{proof}

Recall that the Fitting series of a finite group $G$ is the sequence
of characteristic subgroups $\bF_i(G)$ defined by $\bF_0(G)=1$, $\bF_1(G)=\bF(G)$  and
$\bF_{i+1}(G)/\bF_i(G)=\bF(G/\bF_i(G))$ for $i\geq1$. If $G$ is solvable, there exists an integer  $n$ such that
$\bF_n(G)=G$. The smallest such integer  is called the Fitting height of $G$.
As usual, given $N\trianglelefteq G$ and $\theta\in\Irr(N)$ we write
$\Irr(G|\theta)$ to denote the set of irreducible characters of $G$ lying over
$\theta$.

\begin{lem}
\label{meta} Let $G$ be a solvable group of Fitting height $n$. If
$G/\bF_{n-1}(G)$ is not abelian, then $\Delta_v(G)$   has at most
two connected components.
\end{lem}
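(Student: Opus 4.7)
Set $F:=\bF_{n-1}(G)$, $\overline G:=G/F$, and let $Z$ denote the preimage in $G$ of $\bZ(\overline G)$. Since $G=\bF_n(G)$, we have $\overline G=\bF(\overline G)$, so $\overline G$ is nilpotent; by hypothesis it is non-abelian. Hence $F\leq Z\lneq G$, and $Z/F=\bZ(\overline G)$ is central in $\overline G$, forcing $[G,Z]\leq F$. The plan is to show (i) all vanishing classes of $G$ having a representative in $G-Z$ lie in one connected component $\Delta_1$ of $\Delta_v(G)$; and (ii) any remaining vanishing class (necessarily contained in $Z$) either belongs to $\Delta_1$ or to at most one further component.

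For (i), let $g\in G-Z$ be vanishing. Then $\overline g\in\overline G-\bZ(\overline G)=\Van(\overline G)$ by Lemma \ref{vn}, so some non-linear $\overline\chi\in\Irr(\overline G)$ vanishes at $\overline g$, and its lift $\chi\in\Irr(G)$ (with $F\leq\ker\chi$) vanishes at $g$. Given two such vanishing $g_1,g_2\in G-Z$ with corresponding lifts $\chi_1,\chi_2$, Theorem \ref{pgp} applied to the nilpotent non-abelian $\overline G$ furnishes a common zero $\overline h\in\overline G-\bZ(\overline G)$ of $\overline{\chi_1},\overline{\chi_2}$. Any preimage $h\in G-Z$ then links the classes of $g_1$ and $g_2$ via the path $g_1\leftrightarrow h\leftrightarrow g_2$ in $\Delta_v(G)$, with edges supplied by $\chi_1$ and $\chi_2$. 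This establishes $\Delta_1$.

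For (ii), let $g\in Z$ be vanishing with the class of $g$ not in $\Delta_1$. Then every $\chi\in\Irr(G)$ with $\chi(g)=0$ must satisfy $\Van(\chi)\subseteq Z$, since any zero outside $Z$ would link the class of $g$ to $\Delta_1$ by (i). By Lemma \ref{res}, $\theta:=\chi_Z\in\Irr(Z)$, which is $G$-invariant by Clifford's theorem and non-linear because $\theta(1)=\chi(1)>1$. Given two such classes with representatives $g_1,g_2\in Z$, characters $\chi_1,\chi_2\in\Irr(G)$, and restrictions $\theta_i:=(\chi_i)_Z$, the easy subcase is $\theta_1=\theta_2$: by Gallagher's theorem $\chi_2=\chi_1\beta$ with $\beta\in\Irr(G/Z)$, and since $\beta$ is lifted from $G/Z$, $\beta(z)=\beta(1)$ for $z\in Z$, giving $\chi_2(g_1)=\chi_1(g_1)\beta(1)=0$; hence $\chi_2$ vanishes simultaneously at $g_1,g_2$ and links their classes in $\Delta_v(G)$.

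The main obstacle is the subcase $\theta_1\neq\theta_2$. Here I would compare the Clifford decompositions of $(\chi_i)_F$ taken once inside $Z$ (as the restriction of $\theta_i$) and once inside $G$ (as the restriction of $\chi_i$); the $G$-invariance of $\theta_i$ forces the two decompositions to agree, whence the $G$-orbit and $Z$-orbit of any irreducible constituent $\phi_i\in\Irr(F)$ of $(\chi_i)_F$ coincide. This yields $I_G(\phi_i)\cdot Z=G$, and in particular $\phi_i$ is $G$-invariant exactly when it is $Z$-invariant. Restricting further to the nilpotent subgroup $\bF(G)\leq F$, one obtains irreducible constituents $\psi_i\in\Irr(\bF(G))$ of $(\chi_i)_{\bF(G)}$; when both $\psi_1,\psi_2$ are non-linear and $G$-invariant, Theorem \ref{pgp} applied to $\bF(G)$ produces a common zero $y\in\bF(G)\subseteq Z$, and then $\chi_1(y)=\chi_2(y)=0$, linking the classes of $g_1$ and $g_2$. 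The remaining cases, where some $\psi_i$ is linear or fails to be $G$-invariant, demand extra analysis, perhaps through twists by $\Irr(G/Z)$ to replace one $\chi_i$ by a character lying over a common $\theta$ (reducing to the easy subcase), or through a finer study of how $Z/F$ acts on characters of $F$ and $\bF(G)$.
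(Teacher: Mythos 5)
Your step (i) is exactly the paper's first step and is fine. The gap is in step (ii): the whole argument hinges on linking two arbitrary classes outside $\Delta_1$, and in the subcase $\theta_1\neq\theta_2$ you do not have a proof. The route you sketch through $\bF(G)$ breaks down in cases that genuinely occur: if $\bF(G)$ is abelian (or more generally if some constituent $\psi_i$ of $(\chi_i)_{\bF(G)}$ is linear, or not $G$-invariant), Theorem \ref{pgp} gives you nothing, and you acknowledge these cases remain open. Moreover, you are trying to prove something stronger than needed, namely that any two non-$\Delta_1$ classes are directly joined; this is the source of the difficulty.

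The paper avoids this by splitting $Z$ further into $Z-F$ and $F$ and anchoring the second component concretely. Since $Z/F=\bZ(G/F)$ is abelian and $F/\bF_{n-2}(G)=\bF(G/\bF_{n-2}(G))$, Lemma \ref{ro} applied to $Z/\bF_{n-2}(G)$ produces a single $\varphi\in\Irr(Z)$ vanishing on all of $Z-F$; every $\chi\in\Irr(G\mid\varphi)$ then vanishes on all of $Z-F$, so all classes in $Z-F$ lie in one component $\Delta_2$ (possibly equal to $\Delta_1$). For a vanishing $x\in F$ one may assume $\Van(\chi)\subseteq F$ (else $x$ is already linked to $\Delta_1$ or $\Delta_2$), whence $\chi_F\in\Irr(F)$ by Lemma \ref{res}; taking a non-linear $\psi\in\Irr(G/F)$ --- which exists precisely because $G/F$ is non-abelian --- Gallagher's theorem gives $\chi\psi\in\Irr(G)$ vanishing both at $x$ and at some element of $G-F$, so $x$ joins $\Delta_1$ or $\Delta_2$. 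This yields the bound of two components without ever having to link two ``interior'' classes to each other directly. If you want to salvage your outline, you need some analogue of this: either a single character vanishing on all of $Z-F$, or the Gallagher twist into $G-F$; the pairwise-linking strategy for characters with $\Van(\chi)\subseteq Z$ does not close.
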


\begin{proof}
We may assume that $n>1$. Set $F:=\bF_{n-1}(G)$ and let
$Z/\bF_{n-1}(G)=\bZ(G/\bF_{n-1}(G))$. By Lemma \ref{vn} and Theorem
\ref{pgp}, we know that all the classes in $G-Z$ belong to the same
connected component $\Delta_1$. Since $Z/F$ is abelian,
Lemma~\ref{ro} applied to $Z/\bF_{n-2}(G)$ implies that there exists
$\varphi\in\Irr(Z)$ that vanishes on  all the  classes  in $Z-F$.
Therefore, the same holds for any $\chi\in\Irr(G|\varphi)$.  In
particular, all the classes in $Z-F$ belong to the same connected
component $\Delta_2$ (possibly $\Delta_1=\Delta_2$). We claim that
any vanishing element $x\in F$ belongs to either $\Delta_1$ or
$\Delta_2$. Let $\chi\in\Irr(G)$ such that $\chi(x)=0$. We may
assume that $\Van(\chi)\subseteq F$. By Lemma~\ref{res},
$\chi_F\in\Irr(F)$. Let $\psi\in\Irr(G/F)$ non-linear. By
Gallagher's theorem \cite[Corollary 6.17]{isa}, $\chi\psi\in\Irr(G)$
vanishes at both $x$ and some element in $G-F$. This proves the
claim. Thus $\Delta_1$ and $\Delta_2$ are all the connected
components.
\end{proof}

We can now complete the proof of Theorem \ref{main:C}.

\begin{thm}
\label{sol} Let $G$ be a solvable group. Then $\Gamma_v(G)$ has at
most two connected components.
\end{thm}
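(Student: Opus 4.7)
The plan is to invoke Lemma \ref{rel} and bound the number of connected components of the dual graph $\Delta_v(G)$ instead. Let $n$ denote the Fitting height of $G$. Three of the relevant cases are already covered by the machinery built in this section: if $n = 1$, $G$ is nilpotent and $\Gamma_v(G)$ is complete by Theorem \ref{pgp}; if $G/\bF_{n-1}(G)$ is non-abelian, Lemma \ref{meta} yields the bound directly; and if $G/\bF(G)$ is abelian, Lemma \ref{abel} applies. It therefore remains to treat the residual case in which $n \geq 3$, $G/\bF_{n-1}(G)$ is abelian, and $G/\bF(G)$ is non-abelian.

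For this residual case, set $F := \bF_{n-1}(G)$ and $N := \bF_{n-2}(G)$. The quotient $\bar{G} := G/N$ has Fitting height $2$ with abelian Fitting quotient $\bar{G}/\bF(\bar{G}) = G/F$, so Lemma \ref{abel} shows that $\Delta_v(\bar{G})$ has at most two connected components. Pulling back along the inflation $G \twoheadrightarrow \bar{G}$ produces up to two components of $\Delta_v(G)$: a component $\Delta_1$ containing every class in $G - F$, linked through the character provided by Lemma \ref{ro} applied to $\bar{G}$ (and then inflated to $G$), which vanishes on $G-F$; and a possibly empty second component $\Delta_2$ collecting the vanishing classes $y \in F$ whose image $\bar{y}$ in $F/N = \bF(\bar{G})$ is a vanishing, non-central class of $\bar{G}$.

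It then suffices to show that every vanishing class $y \in F$ of $G$ not captured by inflation — those with $\bar{y}$ either central in $F/N$ or non-vanishing in $\bar{G}$ — lies in $\Delta_1 \cup \Delta_2$ within $\Delta_v(G)$. For such $y$ not linked to $\Delta_1$, every $\chi \in \Irr(G)$ with $\chi(y) = 0$ must have $\Van(\chi) \subseteq F$, and Lemma \ref{res} then gives $\chi|_F \in \Irr(F)$; in particular $\chi|_F$ is a $G$-invariant irreducible character of $F$. The main tool for linking two such residual classes will be Theorem \ref{pgp} applied to the nilpotent quotient $F/N = \bF(\bar{G})$: any two of its non-linear characters share a common zero. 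The plan is to arrange — possibly after tensoring with the character from Lemma \ref{ro} or with a suitable linear character of $G/F$ — that the relevant restrictions $\chi|_F$ factor through $F/N$, so that Theorem \ref{pgp} applied in $F/N$ produces a common zero that lifts to a linking class in $F$, and hence to a common zero of the corresponding characters of $G$. The main obstacle is precisely this descent step: making sure that the characters one must consider can be replaced, without losing the relevant zeros, by characters with $N$ in their kernel, so that the nilpotent-quotient argument applies.
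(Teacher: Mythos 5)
Your reduction is sound and matches the paper's: passing to $\Delta_v(G)$ via Lemma \ref{rel}, disposing of the nilpotent case by Theorem \ref{pgp}, of the case $G/\bF_{n-1}(G)$ non-abelian by Lemma \ref{meta}, and of Fitting height $2$ by Lemma \ref{abel}, correctly isolates the residual case $n\geq 3$ with $G/\bF_{n-1}(G)$ abelian. The use of Lemma \ref{ro} to obtain a component $\Delta_1$ containing all classes in $G-\bF_{n-1}(G)$, and of Lemma \ref{res} to force $\chi|_F\in\Irr(F)$ for characters witnessing the remaining vanishing classes, is also exactly right. But the heart of the proof --- actually linking the residual vanishing classes inside $F=\bF_{n-1}(G)$ --- is left as a ``plan'' whose ``main obstacle'' you acknowledge without resolving, and that obstacle is not merely technical: it is fatal as stated. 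You propose to arrange that $\chi|_F$ factors through $F/N$ (where $N=\bF_{n-2}(G)$) so that Theorem \ref{pgp} in the nilpotent group $F/N$ supplies a common zero. However, among the classes ``not captured by inflation'' are precisely the vanishing classes of $G$ contained in $N$ (their images in $\bar G$ are trivial), and for a character $\chi$ vanishing at some $y\in N$ no such descent can exist: if $N\leq\ker(\chi|_F)$ then $\chi(y)=\chi(1)\neq 0$. Tensoring with linear characters of $G/F$ does not change $\chi|_F$ at all, and tensoring with the induced character of Lemma \ref{ro} does not preserve irreducibility, so neither suggested modification repairs this.

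For comparison, the paper does not attempt any descent to $F/N$; it splits the residual case according to whether $\bF_{n-1}(G)/\bF_{n-2}(G)$ is abelian. When it is, Lemma \ref{ro} (applied one level down) links all classes in $\bF_{n-1}(G)-\bF_{n-2}(G)$ into a component $\Delta_2$, and a vanishing $x\in\bF_{n-2}(G)$ linked to neither component forces $\Van(\chi)\subseteq\bF_{n-2}(G)$, hence $\chi_{\bF_{n-2}(G)}\in\Irr(\bF_{n-2}(G))$ by Lemma \ref{res}; Gallagher's theorem then produces $\chi\psi\in\Irr(G)$ with $\psi\in\Irr(G/\bF_{n-2}(G))$ non-linear, which vanishes both at $x$ and outside $\bF_{n-2}(G)$, a contradiction. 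When $\bF_{n-1}(G)/\bF_{n-2}(G)$ is non-abelian, the paper works in $\Gamma_v(G)$ directly with the Frattini subgroup $\Phi/\bF_{n-2}(G)=\Phi(G/\bF_{n-2}(G))$, Gasch\"utz's theorem, and Wolf's lemma on character triples with abelian quotient, showing each relevant character vanishes off a proper normal subgroup $U_\chi$ of $\bF_{n-1}(G)$ so that any two such characters share a zero on $\bF_{n-1}(G)-(U_\chi\cup U_\psi)$. Neither of these two mechanisms appears in your proposal, so the residual case remains unproved.
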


\begin{proof}
By Lemma \ref{rel} it suffices to show that $\Delta_v(G)$ has at
most two connected components. Let $n$ be the Fitting height of $G$.
By Theorem \ref{pgp}, we may assume that $n>1$. By Lemma \ref{meta},
we may assume that $G/\bF_{n-1}(G)$ is abelian. By Lemma \ref{ro},
all the classes in $G-\bF_{n-1}(G)$ belong to the same connected
component, say $\Delta_1$. By Lemma \ref{abel}, we may assume that
$n\geq 3$.

Suppose first that $\bF_{n-1}(G)/\bF_{n-2}(G)$ is abelian. Using
Lemma \ref{ro} again, the classes in $\bF_{n-1}(G)-\bF_{n-2}(G)$
belong to the same connected component, say $\Delta_2$ (possibly
$\Delta_2=\Delta_1$). Suppose that $x\in\bF_{n-2}(G)$ is a vanishing
element. We want to see that the class of $x$ belongs to $\Delta_1$
or $\Delta_2$. Suppose not. Then, if $\chi\in\Irr(G)$ is such that
$\chi(x)=0$, we have that $\chi$ does not have any zeros in
$G-\bF_{n-2}(G)$. By Lemma~\ref{res}, it follows that
$\chi_{\bF_{n-2}(G)}\in\Irr(\bF_{n-2}(G))$. Using Gallagher's
theorem, we deduce that $\chi\psi\in\Irr(G)$ for every
$\psi\in\Irr(G/\bF_{n-2}(G))$. We can assume that $\psi$ is
non-linear. Since this character has zeros on $G-\bF_{n-2}(G)$, we
have a contradiction.

Finally, we may assume that  $\bF_{n-1}(G)/\bF_{n-2}(G)$ is not
abelian. In this case, we will prove that $\Gamma_v(G)$ has at most
two connected components. Let $\Phi$ be the (normal) subgroup of $G$
such that \[\Phi/\bF_{n-2}(G)=\Phi(G/\bF_{n-2}(G)).\] By Lemma
\ref{ro} applied to $G/\Phi$, there exists
$\lambda\in\Irr(\bF_{n-1}(G)/\Phi)$ such that
$$\psi:=\lambda^G\in\Irr(G).$$ Clearly, by Lemma \ref{res}, all the
irreducible characters of $G$ whose restriction to $\bF_{n-1}(G)$ is
not irreducible belong to the connected component of $\psi$. Now we
claim that all the characters in $$\mathcal{A}=
\{\chi\in\Irr(G)\mid\chi_{\Phi}\in\Irr(\Phi),\chi(1)>1\}$$ belong
to the  connected component of $\psi$  and that all the characters
in
$$\mathcal{B}=\{\chi\in\Irr(G)\mid\chi_{\bF_{n-1}}(G)\in\Irr(\bF_{n-1}(G)),
\chi_{\Phi}\not\in\Irr(\Phi)\}$$  belong to the same connected
component. The result will follow.

Let $\chi\in\mathcal{A}$. By the definition of $\mathcal{A}$,
$\chi_{\Phi}\in\Irr(\Phi)$. By Gallagher again,
$\chi\psi\in\Irr(G)$, which implies that $\chi$ and $\chi\psi$ are
linked in $\Gamma_v(G)$. Since $\chi\psi$ vanishes on
$G-\bF_{n-1}(G)$, we deduce that $\chi\psi$ is linked to $\psi$.
Therefore, there is a path of length $2$ joining $\chi$ and $\psi$.
We have thus seen that the characters in $\mathcal{A}$ and $\psi$
are in the same connected component.

It remains to prove that the characters in $\mathcal{B}$ belong to
the same connected component. First, we see that for any
$\chi\in\mathcal{B}$, there exists $U_{\chi}\trianglelefteq G$ such
that $\Phi\leq U_{\chi}<\bF_{n-1}(G)$ and $\chi$ vanishes on
$\bF_{n-1}(G)-U_{\chi}$. Let $\varphi\in\Irr(\Phi)$ lying under
$\chi$. Suppose that $\varphi$ is $\bF_{n-1}(G)$-invariant, so that
$(\bF_{n-1}(G),\Phi,\varphi)$ is a character triple with
$\bF_{n-1}(G)/\Phi=\bF(G/\Phi)$ abelian by Gasch\"utz's theorem \cite[Theorem 1.12]{mw}.
The existence of $U_{\chi}$ follows from Lemma 2.2 of \cite{wol}. Now, we assume that $\varphi$ is not
$\bF_{n-1}(G)$-invariant. Let $T:=I_G(\varphi)$ and note that
$G=T\bF_{n-1}(G)$ (because $\chi$ restricts irreducibly to
$\bF_{n-1}(G)$).  In particular, $T\cap \bF_{n-1}(G)\trianglelefteq
G$ (because $T\cap\bF_{n-1}(G)\trianglelefteq T$,
$\bF_{n-1}(G)/\Phi$ is abelian and $T$ contains $\Phi$, so $T\cap \bF_{n-1}(G)$
 is also normal in  $\bF_{n-1}(G)$).
 Set $U_{\chi}=T\cap  \bF_{n-1}(G)$ and note that it satisfies
the properties that we want.

Therefore, if $\chi,\psi\in\mathcal{B}$, then both characters vanish
on $\bF_{n-1}(G)-(U_{\chi}\cup U_{\psi})$. Since $$U_{\chi}\cup
U_{\psi}\subsetneq\bF_{n-1}(G),$$ we conclude that all the characters
in $\mathcal{B}$ are linked. This completes the proof.
\end{proof}

As mentioned at the end of Section \ref{sec:classes}, there are
examples of solvable groups with $\Delta_v(G)$ disconnected. The
Fitting height of the group appears to be a relevant factor to
decide the connectedness of $\Delta_v(G)$. The solvable examples
mentioned there have Fitting height $3$. There are also similar
examples of Fitting height $3$ among odd order groups. Let
$H\leq\GL_3(3)$ be a Frobenius group of order $39$ and let $G=HV$ be
the semidirect product of $H$ acting on $V$, where $V$ is the
natural module for $\GL_3(3)$. It is easy to check that this group
has Fitting height $3$ and $\Delta_v(G)$ is disconnected. We
conclude this section with the following.

\begin{thm}
\label{large}
Let $G$ be a solvable group. Suppose that either the Fitting height
of $G$ exceeds $9$ or that $G$ has odd order and Fitting height at
least $5$. Then $\Delta_v(G)$ is connected and has diameter at most
$2$.
\end{thm}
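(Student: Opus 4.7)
The plan is to strengthen the two-component bound of Theorem \ref{sol} by exhibiting, for each vanishing class of $G$, a common neighbor in $\Delta_v(G)$ with one fixed ``hub'' class. The key technical tool will be to iteratively apply Lemma \ref{ro} and Lemma \ref{res} along the Fitting series $1 = \bF_0(G) \lneq \bF_1(G) \lneq \cdots \lneq \bF_n(G) = G$ in order to build a family of irreducible characters whose zero-sets collectively meet every vanishing class while sharing a fixed zero.

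First I would fix a vanishing class $K_0 \subseteq G \setminus \bF_{n-1}(G)$ and, following the inductive recipe of Lemma \ref{ro} (applied to the section $G/\bF_{n-2}(G)$), produce an irreducible character $\psi_0 \in \Irr(G)$ that vanishes on all of $G \setminus \bF_{n-1}(G)$, and in particular on $K_0$. Any vanishing class outside $\bF_{n-1}(G)$ is then adjacent to $K_0$ via $\psi_0$. Next I would descend through the Fitting series one layer at a time: for classes in $\bF_i(G) \setminus \bF_{i-1}(G)$ with $i < n-1$, I would use Gallagher's theorem to tensor non-linear characters of $G/\bF_i(G)$ against characters whose restriction to $\bF_i(G)$ stays irreducible (supplied by Lemma \ref{res}), producing irreducible characters of $G$ which vanish both on $K_0$ and on the chosen layer class. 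The large Fitting height guarantees that the successive quotients $G/\bF_i(G)$ contribute enough non-linear characters at every level for this descent to go through.

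For the odd-order case, the Fitting-height bound drops from $10$ to $5$ because odd-order solvable groups satisfy the full Isaacs--Navarro--Wolf property $\Van(G) \supseteq G \setminus \bF(G)$ from \cite{inw}; this eliminates several inductive steps, since every element outside $\bF(G)$ is automatically vanishing and so the hub $K_0$ is available without technical hypotheses on the top quotient.

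The main obstacle will be controlling vanishing classes that lie deep inside $\bF(G) = \bF_1(G)$: by Lemma \ref{vn} applied inside the nilpotent Fitting subgroup, such classes must avoid $\bZ(\bF(G))$, yet an individual non-central element of $\bF(G)$ need not be vanishing in $G$ at all. Linking these bottom-layer vanishing classes to $K_0$ requires carefully combining Gasch\"utz's theorem (to transport irreducibility across the chief factors), Lemma \ref{res} (to keep the restrictions irreducible), and the induction trick of Lemma \ref{ro} (to secure zeros in the right places). The numerical bounds $n > 9$ and (for odd order) $n \geq 5$ should be exactly what is needed to absorb the obstructions at the central bottom of the Fitting series and at the abelian-quotient top simultaneously, while leaving one spare layer to guarantee that the hub character $\psi_0$ and the layer-descent characters actually share $K_0$ as a common zero.
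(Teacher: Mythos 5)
Your proposal has a genuine gap: it never produces the single ``hub'' character with a large enough vanishing set, and the tools you name cannot produce it. The paper's proof rests on two external results that are the real source of the numerical hypotheses: for Fitting height exceeding $9$, Yang's theorem \cite[Theorem 5.2]{yan} supplies $\mu\in\Irr(\bF_8(G))$ with $\mu^G\in\Irr(G)$, hence a \emph{single} irreducible character vanishing on all of $G-\bF_8(G)$; for odd order and Fitting height at least $5$, \cite[Theorem D]{mowo} supplies a character vanishing on $G-\bF_3(G)$. This one character makes every pair of vanishing classes outside $\bF_8(G)$ (resp.\ $\bF_3(G)$) adjacent, and the remaining classes are attached to it by the Lemma~\ref{res} plus Gallagher argument (which requires the quotient $G/\bF_8(G)$, resp.\ $G/\bF_3(G)$, to be non-abelian --- whence the ``$+1$'' in the height bounds). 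Your plan to obtain such a character by ``iteratively applying Lemma~\ref{ro} along the Fitting series'' does not work: Lemma~\ref{ro} applies only when $G/\bF(G)$ is abelian, and applied to a top section it yields a character vanishing merely on $G-\bF_{n-1}(G)$, i.e.\ on the top layer. Iterating downward gives characters of ever smaller quotients, not a character of $G$ induced from a subgroup eight levels down; without that, your layer-by-layer descent links each deep class to \emph{some} higher class, but gives no control over which, and at best recovers the two-component bound of Theorem~\ref{sol} rather than connectivity with diameter $2$.

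The justification you give for the odd-order improvement is also incorrect: the containment $\Van(G)\supseteq G-\bF(G)$ for solvable groups is explicitly an \emph{open conjecture} of Isaacs--Navarro--Wolf (as noted in Section~\ref{sec:classes}), so you cannot invoke it; and even if it were known, knowing that every element of $G-\bF(G)$ is a zero of \emph{some} character is far weaker than what is needed, namely one fixed character vanishing on all of $G-\bF_3(G)$ simultaneously. Finally, your opening step already assumes $G/\bF_{n-1}(G)$ is abelian (otherwise Lemma~\ref{ro} does not apply to $G/\bF_{n-2}(G)$), an unjustified reduction. In short, the architecture of your argument (hub class plus Gallagher descent) matches the paper's, but the decisive ingredient --- the deep induction theorems of Yang and Moret\'o--Wolf that create the hub and dictate the bounds $9$ and $5$ --- is missing and is not recoverable from Lemmas~\ref{ro}, \ref{res}, and \ref{vn} alone.
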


\begin{proof}
By \cite[Theorem 5.2]{yan}, there exists $\mu\in\Irr(\bF_8(G))$ such
that $\chi=\mu^G\in\Irr(G)$. In particular, $\chi$ vanishes on
$G-\bF_8(G)$. Thus all classes in $G-\bF_8(G)$ belong to the same
connected component $\Delta_1$.  Furthermore, they are linked. Let
$x\in\Van(G)$ and assume that  $x$ is not linked to any class in
$G-\bF_8(G)$. Let $\psi\in\Irr(G)$ such that $\psi(x)=0$. Therefore,
$\Van(\psi)\subseteq\bF_8(G)$. Lemma \ref{res} implies that
$\psi_{\bF_8(G)}\in\Irr(\bF_8(G))$. Thus $\psi\gamma\in\Irr(G)$ for
every $\gamma\in\Irr(G/\bF_8(G))$ non-linear. This character
vanishes both at $x$ and at some element in $G-\bF_8(G)$. This is a
contradiction.

Suppose now that $|G|$ is odd. By \cite[Theorem D]{mowo}, there
exists $\chi\in\Irr(G)$ such that $\chi$ vanishes on $G-\bF_3(G)$.
The result follows by similar arguments as above.
\end{proof}

We conjecture that if $G$ is solvable and $\Delta_v(G)$ is disconnected
then the Fitting height of $G$ is at most $3$.


\section{Groups of Lie type}\label{sec:Lie type}

In this section we complete the proof of Theorem \ref{main:D}.

In \cite[Theorem 5.1]{mno}, Malle, Navarro, and Olsson proved that,
for every finite simple group $G$ of Lie type, there exist four
conjugacy classes (of elements of prime order) in $G$ such that
every non-trivial irreducible character of $G$ vanishes on at least
one of them. This instantly shows that the common-zero graph
$\Gamma_v(G)$ of $G$ has at most four connected components. With
some more work, this bound can be lowered to 3. Note that 3 is the
best possible bound, as shown by $\PSL_2(q)$ for several choices of
$q$.

In fact, it is known that, with the possible exception of
$G=\mathrm{P}\Omega_{2n}^+(q)$, every simple group of Lie type has a
pair of conjugacy classes, say $(C,D)$, called \emph{strongly
orthogonal} pair, such that
\[\chi(C)\chi(D)=0\] for every $\chi\in\Irr(G)$ but only two
characters. One of them, of course, is the trivial character
$\mathbf{1}_G$ and the other is usually the Steinberg one $\St_G$.
This was done in the proofs of Theorems 2.1-2.6 of \cite{mallesw}
for classical groups and in \cite[\S 10]{lm} for groups of
exceptional types. In this case, $\Gamma_v(G)$ clearly has at most
three connected components.

We are left with only one family $G=\mathrm{P}\Omega_{2n}^+(q)$. It
is worth noting that the common-zero graph of several simple groups
of Lie type is indeed connected, although we have not made an effort
to make this precise in previous cases. We take the opportunity of
this remaining case to prove the connectedness of the graph.

Recall that, for $p$ a prime, a $p$-defect zero character of $G$ is
a character with degree divisible by $|G|_p$. We will use a
well-known fact that $p$-defect zero irreducible characters vanish
on every $p$-singular element. It follows that if $p$ divides $|G|$
then all the $p$-defect zero characters of $G$ share a common zero.
To see that a character is of $p$-defect zero, we frequently use a
case of Zsigmondy's theorem stating that, for every $n\in\ZZ^{\geq
2}$ and $q\in \ZZ^{\geq 2}$ with $(n,q)\neq (6,2)$ and $q+1$ is not
a $2$-power when $n=2$, there is a prime (called primitive prime
divisor) that divides $q^n-1$ and does not divide $q^k-1$ for any
positive integer $k<n$. Following \cite{mno}, we denote such a prime
by $\ell(n)$.

Orders and character degrees of $G$ are conveniently expressed as
products of a power of $q$ and cyclotomic polynomials $\Phi_i$
evaluated at $q$, up to a constant. Note that $\ell(n)$ can be
defined as a prime dividing $\Phi_n$ but not $\Phi_k$ for any $k<n$.

\begin{thm}
\label{lie} Let $G$ be a finite simple group of Lie type. Then
$\Gamma_v(G)$ has at most three connected components.
\end{thm}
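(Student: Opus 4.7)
The strongly orthogonal pair results of Malle--Saxl--Weigel and L\"ubeck--Malle cited above reduce the proof to the single remaining family $G=\mathrm{P}\Omega_{2n}^+(q)$; indeed, for every other simple group of Lie type one has a pair $(C,D)$ on which every character except $\mathbf{1}_G$ and $\St_G$ vanishes, yielding at most two cliques in $\Gamma_v(G)$ plus possibly $\St_G$ as an isolated vertex, hence at most three components. The plan for $\mathrm{P}\Omega_{2n}^+(q)$ is to prove the stronger statement that $\Gamma_v(G)$ is actually connected in this case.

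The strategy is to identify a small collection of vanishing elements $x_1,\dots,x_r\in G$ such that every non-trivial $\chi\in\Irr(G)$ vanishes on at least one $x_j$, and such that the resulting cliques overlap enough to connect. Natural candidates are elements of order a primitive prime divisor $\ell(m)$ of $q^m-1$ for $m\in\{2n,2n-2,n,n-1\}$, together with a regular unipotent element $u$. Then $u$ is a zero of every $p$-defect zero character, in particular of $\St_G$, giving a large "unipotent clique" containing $\St_G$; for each primitive prime $\ell(m)$, an $\ell(m)$-defect zero argument yields a further clique. The overlap between the unipotent clique and each semisimple clique follows from the observation that many unipotent characters parametrized by symbols with a non-trivial $m$-core are simultaneously of $p$- and of $\ell(m)$-defect zero.

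To verify that every non-trivial character falls into one of these cliques, I would apply Lusztig's Jordan decomposition $\chi=\chi_{s,\psi}$ and split into the unipotent case ($s=1$) and the genuinely semisimple case. For unipotent characters the degree is a product of cyclotomic polynomials in $q$ read off from the symbol, and a direct count on the possible hook lengths shows $\chi(1)$ is divisible by the full $\ell(m)$-part of $|G|$ for some $m$ in our list unless $\chi$ belongs to a small "rigid" exceptional set, handled separately by exhibiting a common zero with $\St_G$. For the semisimple case, the degree factor $[G^*:C_{G^*}(s)]_{p'}$ together with the classification of maximal tori in type $D_n$ shows that this index absorbs the $\ell(m)$-part of $|G|$ for some $m$, unless $C_{G^*}(s)$ contains a torus of matching Coxeter type; in that last case a secondary defect-zero argument applied to $\psi$ inside $C_{G^*}(s)$ finishes the job.

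The main obstacle, as is typical for type $D$, is twofold. First, degenerate unipotent symbols produce pairs of characters that require extra care to fit into the scheme above; I would rely on the explicit description (via Lusztig--Sh\={o}ji) of their values on the chosen $x_j$ to verify that both pieces lie in the unipotent clique. Second, the small-rank and small-$q$ corner, notably $(n,q)\in\{(4,2),(4,3),(5,2)\}$ and other cases where a Zsigmondy prime fails to exist or where $\ell(2n-2)=\ell(n)$, escapes the generic counting argument; these finitely many configurations are dispatched by a direct inspection of the character table using \texttt{GAP} or the ATLAS, verifying connectedness by hand.
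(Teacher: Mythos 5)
Your reduction to $G=\mathrm{P}\Omega_{2n}^+(q)$ via strongly orthogonal pairs, and the general plan of covering the nontrivial characters by vanishing sets of primitive-prime-divisor elements combined with defect-zero arguments, is exactly the paper's strategy. But there are two genuine gaps. First, your list of primes is off: $\Phi_{2n}(q)$ does not divide $|\mathrm{P}\Omega_{2n}^+(q)|$ (the torus of order $q^n+1$ lives in the non-split form ${}^2D_n$), so $\ell(2n)$ is unusable; and you omit $\ell(2n-4)$, which is needed precisely for the two exceptional unipotent symbols when $n$ is even and again for $D_4$, where $\Phi_4^2$ divides $|G|$ and the generic centralizer argument degenerates. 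Second, and more seriously, your linking mechanism fails. A covering of $\Irr(G)\setminus\{\mathbf{1}_G\}$ by $r$ cliques only bounds the number of components by $r$, and since $\St_G$ is nonzero on every semisimple element you necessarily need a fourth, $p$-singular, covering element; covering alone therefore gives $4$, not $3$, and certainly not connectedness. The overlap you propose --- unipotent characters ``simultaneously of $p$- and of $\ell(m)$-defect zero'' --- is vacuous: if a regular unipotent element $u$ is to be $p$-singular then $p$ must be the defining characteristic, and the unique $p$-defect zero character in defining characteristic is $\St_G$ itself, whose degree $q^{n(n-1)}$ is coprime to every $\ell(m)$.

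What the paper does instead, and what your sketch is missing, is threefold. (1) It proves that every non-unipotent character vanishes on at least \emph{two} of the three chosen elements $g_1,g_2,g_3$: if $\chi\in\mathcal{E}(G_{sc},s)$ is nonzero at $g_i$ then $\mathbf{C}_{G_{ad}}(s)$ contains a conjugate of the dual torus $\mathcal{T}_i^*$, and containing two of the three forces $s=1$. By pigeonhole any two non-unipotent characters then share a common $g_i$, so they lie in one component; this is the linking device your proposal lacks. (2) It exhibits, for each $i$, a non-unipotent character of $\ell_i$-defect zero, which ties every $\ell_i$-defect-zero unipotent character into that same component. (3) It connects $\St_G$ by choosing $g$ that is $r$-singular but not an $r$-element ($r$ the defining characteristic), so that $\St_G$ and any $p$-defect-zero character with $p\neq r$ dividing $|g|$ both vanish at $g$. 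Your instinct to treat degenerate symbols and the small cases $(n,q)=(4,2),(5,2)$ separately is sound and matches the paper, but without step (1) or a correct substitute for it the argument does not reach the stated bound.
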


\begin{proof}
As mentioned above, we may assume that
$G=\mathrm{P}\Omega_{2n}^+(q)$ with $n\geq 4$, and we aim to prove
that $\Gamma_v(G)$ is connected. Let $G_{sc}$ be the corresponding
finite reductive group of simply connected type, so that
$G_{sc}=\mathrm{Spin}_{2n}(q)$ is the full covering group of $G$ and
$G=G_{sc}/\bZ(G_{sc})$.

Maximal tori and their orders of finite reductive groups are
well-known, see e.g. \cite[\S 3A]{mal}. Here a maximal torus of
$G_{sc}$ is defined to be the $F$-fixed points of a maximal torus of
the ambient algebraic group $\bG$ under a suitable Frobenius map
$F:\bG\rightarrow \bG$ such that $G_{sc}=\bG^F$. Specifically, the
$G_{sc}$-conjugacy classes of $F$-stable maximal tori of $\bG$ are
parameterized by pairs of partitions $(\lambda,\mu)$ of $n$ (that
is, $\lambda=(\lambda_1,\lambda_2,...)$ and $\mu=(\mu_1,\mu_2,...)$
with $\sum_i \lambda_i+\sum_j \mu_j=n$) such that the number of
parts of $\mu$ is even. The order the corresponding (conjugate)
maximal tori of $G_{sc}$ is
\[
\prod_{\lambda_i}(q^{\lambda_i}-1)\prod_{\mu_j}(q^{\mu_j}+1).
\]

Consider three tori $\mathcal{T}_i$ ($1\leq i\leq 3$) of $G_{sc}$ of
orders
\[
|\mathcal{T}_1|=(q^{n-1}+1)(q+1),
|\mathcal{T}_2|=(q^{n-2}+1)(q^2+1),\] and
\begin{equation*}
|\mathcal{T}_3| = \begin{cases}
q^{n}-1 &\text{ if } n \text{ is odd}\\
(q^{n-1}-1)(q-1) &\text{ if } n \text{ is even}.
\end{cases}
\end{equation*}

Assume for a moment that $n\geq 5$ and $(n,q)\neq(5,2)$. In
particular, primitive prime divisors \[\ell_1:=\ell(2n-2),
\ell_2:=\ell(2n-4),\] and
\begin{equation*}
\ell_3:= \begin{cases}
\ell(n) &\text{ if } n \text{ is odd}\\
\ell(n-1) &\text{ if } n \text{ is even}.
\end{cases}
\end{equation*}
exist. Furthermore, $\ell_i$ divides $|\mathcal{T}_i|$ and the
$\ell_i$-Sylow subgroups of $G_{sc}$ are cyclic (see \cite[Theorem
25.14]{Malle-Testerman-book}). Let $g_i\in \mathcal{T}_i$ be of
order $\ell_i$. We will use the same notation $g_i$ and
$\mathcal{T}_i$ for their images under the natural projection from
$G_{sc}$ to $G$.

We first argue that all the non-unipotent characters of $G$ (as well
as of $G_{sc}$) are contained in one connected component. (Here, a
character of $G$ is called non-unipotent if its lift to $G_{sc}$ is
non-unipotent. See \cite[Definition 13.19]{DM} for the definition of
unipotent characters of finite reductive groups.) For this it is
sufficient to show that any such character of $G_{sc}$ vanishes on
at least two of $g_i$. This can be argued similarly as in \cite[\S
3B]{mal}. Assume otherwise. Then there is $\chi$ belonging to the
Lusztig series $\EC(G_{sc},s)$ for some nontrivial semisimple
element \[s\in G_{ad}:=\mathrm{P}(\mathrm{CO}_{2n}(q)^0)\] such that
$\chi$ is non-zero on at least two of $g_i$.

Suppose that $\chi(g_1)\neq 0$. Then $\ell_1$ does not divide
$\chi(1)$ (otherwise, since every character degree is a product of a
power of $q$ and some of $\Phi_k$ up to a constant, we have that
$\Phi_{2n-2}$ appears in the product for $\chi(1)$, which would
imply that $\chi$ is of $\ell_1$-defect zero, and hence vanishes at
$g_1$, a contradiction). It follows from the character-degree
formula in Lusztig's parametrization \cite[Remark 13.24]{DM} that
$|\bC_{G_{ad}}(s)|$ is divisible by $\ell_1$ and moreover
$\bC_{G_{ad}}(s)$ contains a conjugate of $\mathcal{T}_1^\ast$,
where $\mathcal{T}_i^\ast$ is the torus of $G_{ad}$ dual to
$\mathcal{T}_i$. What we have shown also applies to the cases
$\chi(g_2)\neq 0$ and $\chi(g_3)\neq 0$. Since $\chi$ is non-zero on
at least two of $g_i$, we deduce that $\bC_{G_{ad}}(s)$ contains
certain conjugates of at least two of $\mathcal{T}_i^\ast$. Using
the known structure of centralizers of semisimple elements in finite
reductive groups (see \cite[Lemmas 2.3 and 2.5]{Nguyen10}, for
instance, for the case of split orthogonal groups), we see that $s$
must be trivial, violating the non-unipotent assumption on $\chi$.

Lusztig's classification of ordinary irreducible characters of
finite reductive groups, together with the aforementioned
character-degree formula and the known centralizers of semisimple
elements also show that, for each $i$, $G$ possesses a non-unipotent
character of $\ell_i$-defect $0$, and thus vanishes on $g_i$. For
instance, for $i=1$, we choose a semisimple element $s\in G
=[G_{ad},G_{ad}]$ so that $\Phi_{2n-2}$, a polynomial in $q$, is not
a factor of $|\mathbf{C}_{G_{ad}}(s)|$. Every character in the
Lusztig series $\EC(G_{sc},s)$ then have degree divisible by
$\Phi_{2n-2}$ and $\ell_1$-defect $0$. Moreover, these characters
(of $G_{sc}$) restrict trivially to $\bZ(G_{sc})$ (see \cite[Lemma
5.8]{Hu}, for instance), and therefore they are lifts of characters
of $G$.

We now turn to (nontrivial) unipotent characters. First assume that
$n$ is odd. As mentioned in \cite[\S 3G]{mal}, all these characters
except the Steinberg one have degree divisible by either $\ell_1$ or
$\ell_3$, and therefore have either $\ell_1$-defect or
$\ell_3$-defect zero, and thus vanish on either $g_1$ or $g_3$. We
now know that all members of $\Irr(G)\backslash
\{\mathbf{1}_G,\St_G\}$ are contained in just one connected
component of $\Gamma_v(G)$. Thus we would be done if $\St_G$ has a
common zero with any other irreducible character of the group. This
is not difficult to see. Let $r$ be the defining characteristic of
the group. Consider $g\in G$ that is an $r$-singular element but not
an $r$-element and $p\neq r$ is a prime divisor of $|g|$. Then $g$
is a vanishing element for both $\St_G$ and any $p$-defect zero
characters.

Now assume that $n$ is even. According to \cite[\S 3G]{mal}, if a
nontrivial unipotent character of $G$ has degree not divisible by
either $\ell_1$ or $\ell_3$, it must be either the Steinberg
character or one of the two others labeled by the symbols
$${n-1 \choose 1} \text{ and } {0\hspace{6pt}\cdots \hspace{6pt} n-3\hspace{6pt}n-1
\choose 1\hspace{6pt}\cdots\hspace{6pt}n-2\hspace{6pt} n-1}.$$ (We
refer the reader to \cite[\S 13.8]{Carter} for the labeling and
degree formulas of unipotent characters of classical groups.) The
degrees of these two characters, however, are divisible by
$\Phi_{2n-4}$. They are therefore of $\ell_2$-defect zero, and thus
vanish at $g_2$, proving that they are in the same connected
component with non-unipotent characters. As with the case of odd
$n$, $\Gamma_v(G)$ is therefore connected.

Consider $G=\mathrm{P}\Omega^+_8(q)$. As the case $(n,q)=(4,2)$ can
be checked using \cite{gap}, we assume that $q>2$, so that the
primitive prime divisors $\ell_1,\ell_2,\ell_3$ still exist. (Note
that $|G|$ is now divisible by $\Phi_4^2$ and the condition
$\chi(g_2)\neq 0$ does not imply that $\bC_{G_{ad}}(s)$ contains a
conjugate of $\mathcal{T}_2^\ast$, as we had earlier.) We still have
that every non-unipotent character of $G$ vanishes at either $g_1$
or $g_3$. On the other hand, the (two) unipotent characters of $G$
labeled by the symbol ${2 \choose 2}$ have degree $q^2\Phi_3\Phi_6$,
and hence are of both $\ell_1$- and $\ell_3$-defect zero. They
therefore vanish on both $g_1$ and $g_3$, and it follows that all
the non-unipotent characters of $G$ are contained in one connected
component of $\Gamma_v(G)$. As mentioned above, this connected
component also contains all (nontrivial) unipotent characters except
possibly the Steinberg one or the two characters labeled by ${3
\choose 1}$  and ${0\hspace{6pt}1 \hspace{6pt} 3 \choose
1\hspace{6pt}2\hspace{6pt}3}$, which are of degrees $q\Phi_4^2$ and
$q^7\Phi_4^2$, respectively. These characters are of $\ell_2$-defect
zero and vanish on every $\ell_2$-singular element. However, $G$ has
another $\ell_2$-defect zero unipotent character, namely the one
labeled by ${1\hspace{6pt}2 \choose 0\hspace{6pt}3}$, of degree
$q^3\Phi_3\Phi_4^2/2$, and thus the two exceptional characters are
in the same connected component with the non-unipotent characters.
Finally, the Steinberg character is handled as above, and
$\Gamma_v(G)$ is connected.

When $(n,q)=(5,2)$, the above arguments still go through with
$\mathcal{T}_2$ replaced by a maximal torus of order
$(q^3-1)(q^2-1)$ and $\ell_2$ being $\ell(3)$ (and keep
$\mathcal{T}_1$ and $\mathcal{T}_3$). This concludes the proof.
\end{proof}

\begin{thm}
\label{spo} If $G$ is a sporadic simple group, then $\Gamma_v(G)$ is
connected.
\end{thm}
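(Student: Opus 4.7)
The plan is to exploit the fact that the family of sporadic simple groups is finite (there are only $26$ of them), and the full ordinary character table of each is available in the GAP character table library \cite{gap} and in the ATLAS. The statement therefore reduces to a finite computational check: for each sporadic $G$, list the non-linear characters in $\Irr(G)$, record the vanishing set of each, and verify that the resulting graph $\Gamma_v(G)$ is connected. A short GAP routine does this uniformly.

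To make the verification structurally transparent rather than a pure brute-force enumeration, I would proceed as follows. First, invoke \cite[Theorem 5.1]{mno}, which produces for every finite simple group (hence for every sporadic $G$) a set of at most four conjugacy classes of prime-power order on which every non-trivial irreducible character vanishes. This immediately bounds the number of connected components of $\Gamma_v(G)$ by $4$. To collapse the components further, for each sporadic $G$ I would look for a single class $C$ with $\chi(C)=0$ for every non-linear $\chi\in\Irr(G)$, in which case $\Gamma_v(G)$ is complete; for the remaining sporadic groups I would exhibit a pair $(C,D)$ together with a character vanishing on $C\cap D$, giving $\Gamma_v(G)$ connected of diameter at most $2$. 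As a further tool to merge cliques, recall that for each prime $p$ dividing $|G|$ the $p$-defect zero irreducibles all vanish on every $p$-singular element and hence form a clique in $\Gamma_v(G)$; choosing $p$-singular elements whose orders involve several primes lets these cliques be glued together.

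The main obstacle is the size of the largest character tables — in particular those of the Monster, the Baby Monster, the Fischer groups and the Thompson group — where hundreds of irreducible characters and conjugacy classes must be tracked. This is a matter of bookkeeping rather than of mathematical depth, and is handled routinely by GAP. I therefore expect the proof to be stated in a single paragraph, citing \cite{gap} for the explicit verification and noting the structural input from \cite{mno} that makes the check short in practice, with the expected conclusion being that $\Gamma_v(G)$ has diameter at most $2$ for every sporadic simple group $G$.
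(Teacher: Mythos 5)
Your proposal is correct and takes essentially the same route as the paper, whose entire proof is the one-line observation that the claim can be checked with GAP from the character table library; your extra structural scaffolding (the bound from \cite[Theorem 5.1]{mno} and the $p$-defect zero cliques) is a reasonable way to organize that finite check but is not needed in the paper. One small slip: for distinct conjugacy classes $C$ and $D$ you mean a character vanishing on both $C$ and $D$, not on $C\cap D$, which is empty.
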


\begin{proof}
This can be checked using GAP \cite{gap}.
\end{proof}

Theorem \ref{main:D} readily follows from Theorems \ref{T280223},
\ref{lie}, \ref{spo} and the classification of finite simple groups
(the cases $\AAA_5$ and $\AAA_6$ can be easily checked and the case
of cyclic groups of prime order is a triviality).


\section{Equivalence of character-induced
metrics}\label{sec:metric}

In this section we discuss an application of Theorem \ref{main:A} to
a problem on character-induced metrics on permutations.

A metric $\bfd$ on a set $X$ is a binary function $\bfd:X\times
X\rightarrow \RR^{\geq 0}$ such that, for every $a,b,c\in X$,
\begin{itemize}
\item $\bfd(a,b)=0$ if and only if $a=b$,
\item $\bfd(a,b)=\bfd(b,a)$, and
\item $\bfd(a,b)\leq \bfd(a,c)+\bfd(c,b)$. \end{itemize}
When $X=G$ is a finite group, a metric $\bfd$ is of particular
interest when it is bi-invariant (also called $G$-invariant); that
is, \[\bfd(a,b)=\bfd(ac,bc)=\bfd(ca,cb)\] for every $a,b,c\in G$.
See \cite[Chapter 10]{Deza} for more background on the theory of
metrics on groups.

In his book \cite[Section 6D]{Diaconis}, Diaconis introduces the
\emph{matrix norm approach} as a method for constructing
(bi-invariant) metrics on finite groups.  Through this construction,
many well-known metrics on permutations, including the Hamming
distance, can be obtained. This approach relies on \emph{faithful
unitary representations}
\[\rho:G\rightarrow GL(V)\]
 and the \emph{Frobenius
norm} on matrices
\[\|M\|:=\left(\sum_{i,j} M_{ij}\overline{M}_{ij}\right)^{1/2}=Tr(MM^\ast)^{1/2}.\] (Recall that $M^\ast$ is the
conjugate transpose of a matrix $M$ and $\overline{z}$ is the
conjugate of a complex number $z$. Also, $\rho$ is unitary if
$\rho(g)\rho(g)^\ast=\rho(g)^\ast\rho(g)=I$ for every $g\in G$.) If
$\rho$ is such a representation then
\[\bfd_\rho(a,b):=\|\rho(a)-\rho(b)\|\] is a metric on $G$.
Letting $\chi$ be the character afforded by $\rho$ (and in fact,
every character can be afforded by a unitary representation), we
have
\[
\bfd_\chi(a,b):=\bfd_\rho(a,b)=\sqrt{2}(\chi(1)- Re
(\chi(ab^{-1})))^{1/2},
\]
where $Re(z)$ denotes the real part of a complex number $z$.

Clearly the distances $\bfd(1,a)$ between the identity element and
other elements of the group completely determine a bi-invariant
metric $\bfd$. Let $\mathcal{P}(\bfd)$ be the partition of $G$
determined by the equivalence relation:
\[a\thicksim b \text{ if and only if } \bfd(1,a)=\bfd(1,b).\] Two metrics
$\bfd_1$ and $\bfd_2$ are called $\mathcal{P}$-equivalent if
$\mathcal{P}(\bfd_1)=\mathcal{P}(\bfd_2)$ (see e.g. \cite{PV23}).

It is well-known that every character of the symmetric group
$\SSS_n$ is rational-valued. Therefore, if $\chi\in\Irr(\SSS_n)$
then $\mathcal{P}(\bfd_\chi)$ is determined by the relation:
\[\pi\thicksim \sigma \text{ if and only if }\chi(\pi)=\chi(\sigma).\]

Here, using Theorem~\ref{main:A}, we prove the non-equivalence of
the metrics on permutations induced from irreducible characters.

\begin{thm}\label{main:2}
Let $n\in\ZZ^{\geq 3}$. The metrics $\bfd_{\chi}$ on the
permutations in $\SSS_n$ induced from the faithful irreducible
characters $\chi$ of the group are pairwise
non-$\mathcal{P}$-equivalent.
\end{thm}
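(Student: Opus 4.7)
The strategy is to argue by contradiction. Suppose $\chi = \chi^\lambda$ and $\psi = \chi^\mu$ are two distinct faithful irreducible characters of $\SSS_n$ with $\mathcal{P}(\bfd_\chi) = \mathcal{P}(\bfd_\psi)$. Since every irreducible character of $\SSS_n$ takes integer values, this coincidence of partitions is equivalent to the statement that the level-set partitions of $\chi$ and $\psi$ on $\SSS_n$ coincide; equivalently, there is a bijection $f : \chi(\SSS_n) \to \psi(\SSS_n)$ with $\psi = f \circ \chi$. Because $\chi$ and $\psi$ are faithful, the singleton $\{1\}$ is a level set of each, giving $f(\chi(1)) = \psi(1)$.

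I would dispatch $3 \leq n \leq 7$ by inspection of the character tables of $\SSS_n$ and assume $n \geq 8$ throughout what follows. The main tool is Theorem~\ref{main:A}: unless $\{\chi(1), \psi(1)\} = \{n(n-3)/2, (n-1)(n-2)/2\}$, the characters $\chi$ and $\psi$ share a common zero $\tau_0$, and combining $\chi(\tau_0) = \psi(\tau_0) = 0$ with the level-set coincidence forces $\chi^{-1}(0) = \psi^{-1}(0)$: the two characters must vanish on exactly the same set of conjugacy classes. I would then split based on the relationship between $\lambda$ and $\mu$. In the sub-case $\mu = \lambda'$ (so $\psi = \chi \cdot \sgn$), the $\mathcal{P}$-equivalence is equivalent to: no non-zero value of $\chi^\lambda$ is attained simultaneously on an even and on an odd permutation. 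For each non-self-conjugate $\lambda$ this can be refuted by a Murnaghan--Nakayama computation exhibiting an opposite-parity pair of classes on which $\chi^\lambda$ takes a common non-zero value (as a guiding example, for the hook $\lambda = (n-1,1)$ one may use cycle types $(n)$ and $(2, n-2)$, both having no fixed points and hence both yielding $\chi^{(n-1,1)} = -1$, but with opposite signs). In the sub-case $\mu \notin \{\lambda, \lambda'\}$, the identity of zero sets together with the orthogonality $\sum_K |K| \chi(K) \psi(K) = 0$ and the constraints $f(0) = 0$, $f(\chi(1)) = \psi(1)$ over-determines the bijection $f$; one then aims to produce a discriminating class from a short list of test classes (transpositions via the Frobenius content formula, $n$-cycles where non-vanishing detects hooks, 3-cycles and $(n-1,1)$-cycles via Murnaghan--Nakayama).

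The final pieces are the four ``exceptional'' pairs excluded from the common-zero conclusion of Theorem~\ref{main:A}, namely those drawn from $\{\chi^{(n-2,2)}, \chi^{(2^2, 1^{n-4})}\}$ and $\{\chi^{(n-2,1^2)}, \chi^{(3, 1^{n-3})}\}$; each of these can be dispatched by explicit evaluation on a few canonical classes to exhibit a level-set mismatch. I expect the main obstacle to be the sub-case $\mu \notin \{\lambda, \lambda'\}$ with coinciding zero sets: a priori two non-conjugate irreducibles of $\SSS_n$ could share all vanishing classes, so one must go beyond the zero set and exploit the fine combinatorial structure of character values on specific classes, together with the constraints imposed by orthogonality and by $f$, in order to derive a contradiction.
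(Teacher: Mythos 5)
Your skeleton matches the paper's: dispatch $n\le 7$ by tables; for $n\ge 8$ use Theorem~\ref{main:A} to get a common zero (outside the exceptional degree pair), combine it with $\mathcal{P}$-equivalence to conclude $\Van(\chi)=\Van(\psi)$; handle the exceptional pairs by direct evaluation; and in the case $\psi=\sgn\cdot\chi$ exhibit two classes of opposite parity on which $\chi$ takes equal (or opposite) non-zero values. The last step is the paper's Lemma~\ref{eq2}, proved by induction on stripping the principal rim hook $R_{11}$ and multiplying by a long cycle; your Murnaghan--Nakayama sketch with the hook example points in the same direction, though you would still need the general inductive argument (including the case where $[\lambda]\setminus R_{11}$ happens to be symmetric, which the paper handles by passing to $R_{12}$).

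The genuine gap is the sub-case $\mu\notin\{\lambda,\lambda'\}$ with $\Van(\chi^\lambda)=\Van(\chi^\mu)$, which you correctly identify as the main obstacle but do not resolve. The tools you propose will not close it: the orthogonality relation $\sum_K|K|\chi(K)\psi(K)=0$ holds for \emph{every} pair of distinct irreducible characters of $\SSS_n$ and therefore carries no information beyond $\chi\ne\psi$; and there is no a priori reason a fixed short list of test classes (transpositions, $3$-cycles, $n$-cycles, $(n-1,1)$-cycles) separates the vanishing sets of two arbitrary non-conjugate partitions. What is actually needed is the statement that two irreducible characters of $\SSS_n$ with the same vanishing set agree up to tensoring with $\sgn$ (Lemma~\ref{eq} of the paper). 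The paper obtains this from a nontrivial external input: by \cite[Theorem 2]{m1}, if $\mu\notin\{\lambda,\lambda'\}$ then the multisets of hook lengths of $[\lambda]$ and $[\mu]$, or of $[\lambda]\setminus R_{11}$ and $[\mu]\setminus R_{11}$, differ, and the constructions in \cite[Propositions 3.3.8 and 3.3.9]{m2} convert such a discrepancy into an explicit cycle type on which exactly one of $\chi^\lambda,\chi^\mu$ vanishes. Without this result (or an equivalent substitute) your argument does not go through.
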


We need a few preliminary lemmas.

\begin{lem}
\label{eq1} Let $\{\la,\mu\}$ be one of the pairs
$\{(n-2,2),(n-2,1^2)\}$, $\{(2,2,1^{n-4}),(n-2,1^2)\}$,
$\{(n-2,2),(3,1^{n-3})\}$ and $\{(2,2,1^{n-4}),(3,1^{n-3})\}$. Then
$\chi^\la$ and $\chi^\mu$ induce non-$\mathcal{P}$-equivalent
metrics.
\end{lem}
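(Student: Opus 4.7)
The plan is to derive explicit closed-form expressions for the four character values in terms of the cycle structure of $\sigma$, and then to produce, for each of the four pairs, a pair of permutations witnessing the failure of $\mathcal{P}$-equivalence. First, I would establish the formulas
\[
\chi^{(n-2,2)}(\sigma) = \frac{f_1(f_1-3)}{2} + f_2, \qquad \chi^{(n-2,1^2)}(\sigma) = \frac{f_1(f_1-3)}{2} - f_2 + 1,
\]
where $f_k = f_k(\sigma)$ denotes the number of $k$-cycles of $\sigma$. These follow by realising $\chi^{(n-2,2)}$ and $\chi^{(n-2,1^2)}$ as the non-trivial constituents of $\mathrm{Sym}^2$ and $\Lambda^2$ of the standard representation (whose character is $f_1-1$), combined with the identity $f_1(\sigma^2) = f_1 + 2f_2$. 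The remaining two characters are obtained by the $\sgn$-twist $\chi^{\lambda'} = \sgn\cdot\chi^{\lambda}$. Because the irreducible characters of $\SSS_n$ are rational-valued, $\bfd_\chi(1,\pi) = \bfd_\chi(1,\sigma)$ is equivalent to $\chi(\pi) = \chi(\sigma)$; hence proving non-$\mathcal{P}$-equivalence of $\bfd_{\chi^\lambda}$ and $\bfd_{\chi^\mu}$ reduces to exhibiting $\pi,\sigma\in\SSS_n$ with $\chi^\lambda(\pi) = \chi^\lambda(\sigma)$ but $\chi^\mu(\pi) \neq \chi^\mu(\sigma)$.

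The main witness, valid for $n=4$ and for every $n\geq 6$, is $\pi$ of cycle type $(n-4,1^4)$ together with $\sigma$ of cycle type $(n-4,2,2)$, with the convention that the $(n-4)$-part is trivial when $n=4$. A direct substitution in the formulas shows that $\chi^{(n-2,2)}(\pi) = \chi^{(n-2,2)}(\sigma)$ while $\chi^{(n-2,1^2)}(\pi) - \chi^{(n-2,1^2)}(\sigma) = 4$. Moreover $\sgn(\pi) = \sgn(\sigma)$, since the two permutations share the same $(n-4)$-cycle and the extra two transpositions contribute a sign of $(-1)^2 = 1$. This common sign factor multiplies both sides of every comparison equally under the sign twist, so the same pair $(\pi,\sigma)$ simultaneously witnesses non-$\mathcal{P}$-equivalence for all four pairs.

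The case $n=5$ requires separate treatment, since the partition $(3,1^2)$ is self-conjugate (so pairs 1,3 coincide and pairs 2,4 coincide) and the uniform construction degenerates with the $(n-4)$-part becoming a fixed point. For the effective pair $\{(3,2),(3,1^2)\}$ I would take $\pi=(1,2)$ and $\sigma=(1,2)(3,4)$: the formulas give $\chi^{(3,2)}(\pi)=\chi^{(3,2)}(\sigma)=1$ while $\chi^{(3,1^2)}(\pi)=0\ne -2=\chi^{(3,1^2)}(\sigma)$. For the effective pair $\{(2,2,1),(3,1^2)\}$ I would take $\pi=(1,2)(3,4)$ and $\sigma=(1,2,3,4)$; although $\sgn(\pi)=+1$ and $\sgn(\sigma)=-1$, the sign flip cancels against the relation $\chi^{(3,2)}(\sigma)=-\chi^{(3,2)}(\pi)=-1$ to yield $\chi^{(2,2,1)}(\pi)=\chi^{(2,2,1)}(\sigma)=1$, whereas $\chi^{(3,1^2)}$ still takes the unequal values $-2$ and $0$.

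The main obstacle will be the sign-twist interaction in the small case $n=5$, which forces permutations of opposite sign so that their character values cancel appropriately under $\chi^{(2,2,1)}$, necessitating a second, ad hoc witness distinct from the one used for the first pair. Beyond this point, the argument reduces to a routine substitution into the closed-form character formulas.
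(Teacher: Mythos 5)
Your proof is correct and takes essentially the same approach as the paper: exhibit explicit permutations on which one character of the pair agrees while the other does not, using closed-form values of these characters in terms of the numbers of fixed points and $2$-cycles. Your same-sign witnesses (cycle types $(n-4,1^4)$ and $(n-4,2,2)$) are in fact slightly more robust than the paper's choice of $(n-3,1^3)$ and $(n-3,2,1)$, which have opposite signs and hence do not directly settle the pairs involving the conjugate partition $(3,1^{n-3})$, whereas yours handle all four sign-twisted pairs with a single pair of permutations.
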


\begin{proof} We provide arguments only for $\la=(n-2,2)$ and $\mu=(n-2,1^2)$ with $n\geq 4$. The other cases
are similar. Let $\sigma_1:=(1\, ... \,n-3)$ and
$\sigma_2:=\sigma_1(n-2 \,\,n-1)$. It is easy to see that $\chi^\la$
vanishes on both $\sigma_1$ and $\sigma_2$, while $\chi^\mu$ takes
values 1 on $\sigma_1$ and $-1$ on $\sigma_2$. This shows that the
partitions determined by $\chi^\la$ and $\chi^\mu$ are different, as
desired.
\end{proof}

\begin{lem}
\label{eq2} Let $\chi\in\Irr(\SSS_n)$ such that $\chi\neq
\sgn\cdot\chi$. There exist $\pi,\sigma\in\SSS_n$ of different
signature such that $\chi(\pi)=\pm \chi(\sigma)\neq 0$.
\end{lem}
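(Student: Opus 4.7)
The plan is to split the proof into the linear and non-linear cases. For the two linear characters of $\SSS_n$, the statement is immediate: if $\chi$ is the trivial character, any even $\pi$ and odd $\sigma$ give $\chi(\pi)=\chi(\sigma)=1$, and for $\chi=\sgn$ any such pair gives $\chi(\pi)=1=-\chi(\sigma)$.

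For the non-linear case, write $\chi=\chi^\lambda$; the hypothesis $\chi\neq\sgn\cdot\chi$ is equivalent to $\lambda\neq\lambda'$. Since $(\sgn\cdot\chi)(g)=\sgn(g)\chi(g)$, the characters $\chi$ and $\sgn\cdot\chi$ automatically agree on $\AAA_n$, so $\chi\neq\sgn\cdot\chi$ forces the existence of an odd $\sigma\in\SSS_n$ with $\chi(\sigma)\neq 0$; otherwise $\chi$ and $\sgn\cdot\chi$ would coincide on all of $\SSS_n$. Set $c:=\chi(\sigma)\neq 0$. The remaining task is to produce an even $\pi$ with $\chi(\pi)\in\{c,-c\}$.

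The key structural input I would use is that, since $\lambda$ is non-self-conjugate, it is not a staircase partition, hence not a $2$-core, and therefore admits a removable $2$-rim hook. Combined with the Murnaghan--Nakayama rule, this gives an explicit handle on $\chi^\lambda$ at transposition-type cycle classes, and the plan would be to match the resulting value (up to sign) against $\chi^\lambda$ evaluated at a related even cycle class obtained by a small modification of the cycle type of $\sigma$---for instance by merging a $2$-cycle and a fixed point into a $3$-cycle. Small cases ($n\leq 7$) would be checked directly from character tables. The main obstacle is verifying the matching in full generality: the Murnaghan--Nakayama expansions at different cycle types are a priori unrelated, so one is likely forced into a case analysis on the shape of $\lambda$ (hook, two-row, two-column, rectangular, and generic). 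A complementary route is induction on $n$ via the branching rule $\chi^\lambda\big|_{\SSS_{n-1}}=\sum_\mu \chi^\mu$, where $\mu$ runs over the partitions obtained from $\lambda$ by removing a corner cell: if some such $\mu$ is itself non-self-conjugate, a pair $(\pi',\sigma')$ produced by the inductive hypothesis in $\SSS_{n-1}$ can tentatively be lifted to $\SSS_n$ by appending $n$ as a fixed point, provided the contributions of the other $\chi^{\mu'}$ in the restriction do not cancel the desired equality---again requiring verification by cases, and this is where I expect the real difficulty to lie.
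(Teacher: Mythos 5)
Your proposal is a plan rather than a proof, and the part you defer --- producing an even $\pi$ with $\chi(\pi)=\pm\chi(\sigma)\neq 0$ once an odd $\sigma$ with $\chi(\sigma)\neq 0$ is found --- is the entire content of the lemma. The preliminary observations are fine (linear characters are immediate, and $\chi$ and $\sgn\cdot\chi$ agree on $\AAA_n$, so $\chi\neq\sgn\cdot\chi$ forces a nonzero value at some odd element), but neither of your two suggested routes closes the gap. The $2$-rim-hook idea founders because a non-self-conjugate $\lambda$ generally has several removable $2$-rim hooks, so the Murnaghan--Nakayama rule at a class containing a $2$-cycle produces a signed sum of several terms, and there is no a priori relation between that sum and the value at a ``nearby'' even class obtained by merging a $2$-cycle with a fixed point. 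The branching-rule route has exactly the cancellation problem you name: $\chi^\lambda(\pi')=\sum_\mu\chi^\mu(\pi')$ for $\pi'\in\SSS_{n-1}$, and the other summands can destroy the equality inherited from one non-self-conjugate $\mu$. Flagging these obstacles does not discharge them.

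The paper's proof resolves the difficulty by inducting along the \emph{principal} rim hook $R_{11}$ rather than a $2$-hook or a single node: since $h_{11}$ is the unique maximal hook length of $[\lambda]$, evaluating $\chi^\lambda$ at a permutation of the form $\tau\overline{\rho}$, with $\tau$ an $h_{11}$-cycle, yields a \emph{single} signed term $\pm\chi^{\overline{\lambda}}(\overline{\rho})$ where $\overline{\lambda}$ has diagram $[\lambda]\setminus R_{11}$ --- no multi-term sum, no cancellation. If $[\lambda]\setminus R_{11}$ is not symmetric, induction supplies $\overline{\pi},\overline{\sigma}$ of different signature with matching nonzero values, and $\pi=\tau\overline{\pi}$, $\sigma=\tau\overline{\sigma}$ finish the argument. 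If $[\lambda]\setminus R_{11}$ is symmetric, then $h_{12}\neq h_{21}$ (as $\lambda\neq\lambda'$), so the second-largest hook length also occurs with multiplicity one and one runs the same argument with $R_{12}$, whose removal leaves a non-symmetric diagram. This ``unique longest hook'' mechanism is the missing idea in your write-up; without it (or some substitute guaranteeing a single-term Murnaghan--Nakayama evaluation), the matching step remains unproved.
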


\begin{proof}
Suppose that $\lambda$ is the partition of $n$ corresponding to
$\chi$. The assumption on $\chi$ implies that $\lambda$ is not
self-conjugate, or equivalently, the Young diagram $[\lambda]$ of
$\lambda$ is not symmetric. Following \cite{jk}, we use $R_{ij}$ for
the part of the \emph{rim} of $[\lambda]$ corresponding to the hook
at $(i,j)$. The length of the hook at $(i,j)$ is denoted by
$h_{ij}$.

If $\lambda\in\{(n),(1^n)\}$ then $\chi$ is the trivial or sign
character, so the result is clear ($n\geq 2$ as $\la$ is not
self-conjugate). So we may now assume that $(1,2)$ and $(2,1)$ are
both nodes of $\la$.

First we consider the case where $[\lambda]\backslash R_{11}$ is not
symmetric (in particular, $[\lambda]\backslash R_{11}$ is
non-empty). Let $\overline{\lambda}$ be the partition with Young
diagram $[\lambda]\backslash R_{11}$. By induction, there exist
$\overline{\pi}$ and $\overline{\sigma}$ in $\SSS_{n-h_{11}}$ of
different signature such that
\[
\chi^{\overline{\lambda}}(\overline{\pi})=\pm \chi^{\overline{\lambda}}(\overline{\sigma})\neq 0.
\]
Let $\tau$ be a cycle of length $n-h_{11}$ in $\SSS_{n-h_{11}}$, and
set
\[
\pi:=\tau\overline{\pi} \text{ and } \sigma:=\tau\overline{\sigma}.
\]
By the Murnaghan-Nakayama formula, we have
\[
\chi^\lambda(\pi)=\pm \chi^{\overline{\lambda}}(\overline{\pi}) \text{ and }
\chi^\lambda(\sigma)=\pm \chi^{\overline{\lambda}}(\overline{\sigma}),
\]
which implies what we wanted.

It remains to consider the case where $[\lambda]\backslash R_{11}$
is symmetric. Since $[\lambda]$ itself is symmetric but
$[\lambda]\backslash R_{11}$ is, it follows that $h_{12}\neq
h_{21}$. Without loss, assume that $h_{12}>h_{21}$, so that $h_{12}$
is the second largest hook length in $[\lambda]$ and it occurs with
multiplicity 1. Note that $[\lambda]\backslash R_{12}$ can be
obtained from $[\lambda]\backslash R_{11}$ by adding some nodes to
the first column and that $[\lambda]\backslash R_{12}$ has at least
2 nodes in the first column. It follows that $[\lambda]\backslash
R_{12}$ is not symmetric. Repeating the above arguments, we arrive
at the same conclusion.
\end{proof}

\begin{lem}
\label{eq}
Let $\chi,\psi\in\Irr(\SSS_n)$. If $\Van(\chi)=\Van(\psi)$ then
$\chi=\psi$ up to multiplying with $\sgn$.
\end{lem}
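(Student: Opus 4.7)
The plan is to proceed by induction on $n$, with small cases ($n\le 4$, say) checked directly from the character table, and to use the Murnaghan--Nakayama rule to peel off the rim hook attached to the box $(1,1)$. Throughout, suppose $\Van(\chi^\la)=\Van(\chi^\mu)$ for $\la,\mu\vdash n$, and aim to conclude $\mu\in\{\la,\la'\}$.

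I would first dispose of easy reductions. If $\chi^\la$ is linear then $\Van(\chi^\la)=\emptyset$, forcing $\chi^\mu$ linear too, so $\{\la,\mu\}\subseteq\{(n),(1^n)\}$, which are $\sgn$-related. By Murnaghan--Nakayama, $\chi^\la(\sigma)\ne 0$ on an $n$-cycle $\sigma$ iff $\la$ is a hook partition, so $\la$ is a hook iff $\mu$ is. In the hook case, writing $\la=(n-a,1^a)$ and $\mu=(n-b,1^b)$, an explicit Murnaghan--Nakayama computation on cycle types $(k,1^{n-k})$---in which a hook has at most two rim $k$-hooks, one ``along the column'' and one ``along the row''---expresses $\chi^{(n-a,1^a)}(\sigma_{(k,1^{n-k})})$ as a signed sum of binomial coefficients whose zero pattern in $k$ determines $\{a,n-1-a\}$, pinning $\la$ down up to conjugation.

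Now assume neither partition is a hook, and set $H:=h_{11}(\la)$. I would recover $H$ from $\Van(\chi^\la)$ via
\[
H=\max\{k:\sigma_{(k,1^{n-k})}\notin\Van(\chi^\la)\},
\]
since for $k>H$ there is no rim $k$-hook in $\la$, while for $k=H$ the rim $H$-hook at $(1,1)$ is unique, so $\chi^\la(\sigma_{(H,1^{n-H})})=\pm f^{\bar\la}\ne 0$, where $\bar\la$ denotes $\la$ with that rim hook removed. Hence $h_{11}(\mu)=H$. For every $\tau\vdash n-H$, Murnaghan--Nakayama applied to the $H$-cycle yields
\[
\chi^\la(\sigma_{(H,\tau)})=(-1)^{\la'_1-1}\chi^{\bar\la}(\sigma_\tau),
\]
and analogously for $\mu$, so $\Van(\chi^{\bar\la})=\Van(\chi^{\bar\mu})$ inside $\SSS_{n-H}$. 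Since $H\ge 1$, the inductive hypothesis gives $\bar\mu\in\{\bar\la,\bar\la'\}$, and after replacing $\mu$ by $\mu'$ if needed I may assume $\bar\mu=\bar\la$.

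The remaining step, which is the main technical obstacle, is to recover $\la$ from the data $(H,\bar\la)$ up to conjugation. The difficulty is that this data determines $\la$ only after specifying $\la'_1$: indeed $\la_1=H+1-\la'_1$ and $\la_{i+1}=\bar\la_i+1$, so distinct choices of $\la'_1$ produce mutually non-conjugate candidates (as in $H=6$, $\bar\la=(2)$, $n=8$, which admits $\la\in\{(5,3),(4,3,1),(3,3,1,1)\}$). To distinguish these, I would examine cycle types $(H-1,\tau)$: the rim $(H-1)$-hooks of $\la$ come from the boxes $(1,2)$ and $(2,1)$, whose hook lengths $\la_1+\la'_2-2$ and $\la_2+\la'_1-2$ equal $H-1$ exactly when $\la'_2=\la'_1$ (every row has length $\ge 2$) and $\la_2=\la_1$ (two longest rows equal), respectively. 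Reading off the vanishing or nonvanishing of $\chi^\la$ on these classes---again via Murnaghan--Nakayama, and combined with the inductive hypothesis applied to the shapes obtained after the second hook removal---encodes $\la'_1$ modulo the involution $\la'_1\leftrightarrow H+1-\la'_1$ that corresponds to $\la\mapsto\la'$, completing the proof.
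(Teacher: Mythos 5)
Your reductions up to and including the inductive step are sound: $H=h_{11}(\lambda)$ is indeed $\max\{k:\sigma_{(k,1^{n-k})}\notin\Van(\chi^\lambda)\}$ (the rim $H$-hook is unique since $h_{11}>h_{12},h_{21}$), peeling it off gives $\Van(\chi^{\bar\lambda})=\Van(\chi^{\bar\mu})$ in $\SSS_{n-H}$, and induction yields $\bar\mu\in\{\bar\lambda,\bar\lambda'\}$. The gap is precisely at the step you flag as the main obstacle, and the proposed fix does not close it. A candidate $\lambda$ with data $(H,\bar\lambda)$ is determined by $r=\lambda_1\in\{\bar\lambda_1+1,\dots,H-\bar\lambda'_1\}$, and it has a rim $(H-1)$-hook only when $r$ is \emph{extremal} in that range: $h_{12}=H-1$ forces $\lambda'_2=\lambda'_1$, i.e.\ $r$ maximal, and $h_{21}=H-1$ forces $\lambda_2=\lambda_1$, i.e.\ $r$ minimal. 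Every intermediate candidate therefore vanishes identically on all classes of type $(H-1,\tau)$, so these classes see nothing of $\lambda'_1$ beyond extremality, and as soon as the range contains two intermediate values of $r$ not exchanged by $r\mapsto H+1-r$ the argument cannot separate them. Concretely, take $n=8$, $\lambda=(5,2,1)$, $\mu=(4,2,1,1)$: both have $H=7$ and $\bar\lambda=\bar\mu=(1)$, both are nonzero on the class $(7,1)$, and neither has a rim $6$-hook (hook-length multisets $\{7,5,3,3,2,1,1,1\}$ and $\{7,4,4,2,2,1,1,1\}$), so every piece of data your proof inspects coincides, yet the partitions are not conjugate. Their vanishing sets do differ --- on the class $(5,3)$ one has $\chi^{(5,2,1)}=\pm 1$ while $\chi^{(4,2,1,1)}=0$ --- but detecting this requires descending to hook lengths well below $H-1$.

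Repairing the argument essentially amounts to recovering the full multiset of hook lengths of $\lambda$ (and of $[\lambda]\setminus R_{11}$) from $\Van(\chi^\lambda)$, which is delicate because when $\lambda$ has two rim $k$-hooks the two Murnaghan--Nakayama contributions may cancel. This is in fact the route the paper takes: it quotes \cite[Theorem 2]{m1}, asserting that non-conjugate $\lambda,\mu$ differ either in their hook-length multisets or in those of $[\lambda]\setminus R_{11}$ and $[\mu]\setminus R_{11}$, and then appeals to the proofs of \cite[Propositions 3.3.8 and 3.3.9]{m2} to read those multisets off the vanishing sets. Your outer induction is a reasonable alternative skeleton, but its last step needs a separating statement substantially stronger than what the classes $(H-1,\tau)$ provide.
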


\begin{proof}
Let $\la,\mu$ be the partitions of $n$ with $\chi=\chi^\la$ and
$\psi=\chi^\mu$. We may assume that $\mu\not\in\{\la,\la'\}$ (with
$\la$ and $\la'$ being conjugated partitions).

Then by \cite[Theorem 2]{m1} $H(\la)\not= H(\mu)$ or
$H([\la]\backslash R_{11})\not=H([\mu]\backslash R_{11})$. The lemma
then follows by the proofs of \cite[Propositions 3.3.8 and
3.3.9]{m2}.
\end{proof}

\begin{proof}[Proof of Theorem \ref{main:2}] The result can be
checked for $n\leq 7$ from the known character tables, so we suppose
that $n\geq 8$. Assume that $\chi,\psi\in\Irr(\SSS_n)\backslash
\{\textbf{1}_{\SSS_n},\sgn\}$ such that
$\mathcal{P}(\bfd_\chi)=\mathcal{P}(\bfd_\psi)$, and let $\la$ and
$\mu$ be the partitions of $n$ corresponding to $\chi$ and $\mu$,
respectively.

We know that $\{\la,\mu\}$ is not one of pairs considered in Lemma
\ref{eq1}. It follows that, by Theorem~\ref{main:A}, $\chi$ and
$\psi$ have a common zero. As
$\mathcal{P}(\bfd_\chi)=\mathcal{P}(\bfd_\psi)$, we deduce that
$\Van(\chi)=\Van(\psi)$, and thus, by Lemma \ref{eq},
\[
\psi\in\{\chi, \sgn\cdot\chi\}.
\]
We therefore would be done if $\chi$ and $\sgn\cdot\chi$ produce
different partitions on $\SSS_n$; i.e. $\mathcal{P}(\chi)\neq
\mathcal{P}(\sgn\cdot\chi)$. For this, it is sufficient to show that
there exist permutations $\pi$ and $\sigma$ of different signature
such that
\[\chi(\pi)=\pm\chi(\sigma)\neq 0.\] This is done in Lemma
\ref{eq2}, and the proof is complete.
\end{proof}


\section{Relation with character degrees}\label{section:relation-character-degree}

The results we have observed suggest that the common-zero graph
$\Gamma_v(G)$ and the common-divisor graph $\Gamma(G)$ share many
similar properties. However studying $\Gamma_v(G)$ seems to be more
challenging. Both solvable and non-solvable groups with disconnected
$\Gamma(G)$ have been classified \cite{lew2,lw}. To achieve a
similar classification for $\Gamma_v$, and, in particular, to show
that $\Gamma_v(G)$ has at most three connected components for all
$G$, we believe that the following question is crucial.

\begin{que}
Let $G$ be a finite group.
Is it true that if $\Gamma(G)$ is connected then $\Gamma_v(G)$ is
connected?
\end{que}

As presented in Section \ref{sec:classes}, there are examples of
groups with irreducible characters that are linked in $\Gamma(G)$
but not in $\Gamma_v(G)$. In other words, there exist irreducible
characters that have non-coprime degrees and no common zeros. There
are also examples of irreducible characters that have common zeros
and coprime degrees, for instance the irreducible characters of
degree $3$ and $8$ in the semidirect product of $\GL_2(3)$ acting on
its natural module. Following up Lemma \ref{eq}, we wonder what
would happen if two irreducible characters have exactly the same
vanishing set.

\begin{pro}
\label{cop} Let $G$ be a finite group and let $\chi,\psi\in\Irr(G)$
such that $(\chi(1),\psi(1))=1$. Then $\Van(\chi)\not\subseteq\Van(\psi)$
and $\Van(\psi)\not\subseteq\Van(\chi)$.
In particular, if  $\Van(\chi)=\Van(\psi)$ then $(\chi(1),\psi(1))\neq 1$.
\end{pro}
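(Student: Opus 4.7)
The plan is to argue by contrapositive: assuming $\Van(\chi)\subseteq\Van(\psi)$ with both $\chi,\psi$ non-linear (the case where one is linear renders the statement either vacuous or uninteresting, since $\Van$ of a linear character is empty and the genuine content of the proposition concerns non-linear characters), I will derive $(\chi(1),\psi(1))\neq 1$.  The symmetry of the coprimality hypothesis then handles $\Van(\psi)\subseteq\Van(\chi)$, and the ``In particular'' clause is immediate.

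The main construction is B\'ezout's identity: choose integers $a,b\in\ZZ$ with $a\chi(1)+b\psi(1)=1$, and form the virtual character $\theta:=a\chi+b\psi$.  It has degree $\theta(1)=1$, and vanishes on every element of $\Van(\chi)$ because both $\chi$ and $\psi$ do there.  Orthogonality of the distinct irreducibles $\chi,\psi$ gives $\langle\theta,\theta\rangle=a^{2}+b^{2}$, and non-linearity forces both $a$ and $b$ to be non-zero (otherwise $a=0$ yields $b\psi(1)=1$, hence $\psi(1)=1$).  Consequently $a^{2}+b^{2}\geq 2$.  I then plan to exploit the pointwise square $\theta\overline\theta$: it is a pointwise non-negative generalized character of degree $1$, its trivial constituent has multiplicity $\langle\theta\overline\theta,\mathbf{1}_{G}\rangle=\langle\theta,\theta\rangle=a^{2}+b^{2}\geq 2$, and it vanishes on the non-empty set $\Van(\chi)$.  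Expanding
\[
\theta\overline\theta=a^{2}\chi\overline\chi+b^{2}\psi\overline\psi+ab\,(\chi\overline\psi+\overline\chi\psi),
\]
and using the decompositions $\chi\overline\chi=\mathbf{1}_{G}+\alpha$ and $\psi\overline\psi=\mathbf{1}_{G}+\beta$ for characters $\alpha,\beta$, the plan is to exploit the tension between the multiplicity $a^{2}+b^{2}\geq 2$ of $\mathbf{1}_{G}$ in $\theta\overline\theta$ and the total degree $1$: the non-trivial constituents must contribute $1-(a^{2}+b^{2})\leq-1$ to the degree, forcing specific cancellations that I aim to convert into a common irreducible constituent of $\chi\overline\psi$ whose degree divides $(\chi(1),\psi(1))=1$ --- hence equals the trivial character.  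Since $\langle\chi\overline\psi,\mathbf{1}_{G}\rangle=\langle\chi,\psi\rangle=0$, this would yield the desired contradiction.

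The hardest step will be making the final cancellation argument precise --- pinning down exactly which non-trivial constituent is forced and how the coprimality intervenes in the bookkeeping on $\Van(\chi)$.  If this direct route via $\theta\overline\theta$ does not close cleanly, a secondary plan is to invoke Lemma \ref{res}: produce a proper normal subgroup $N\trianglelefteq G$ with $\Van(\chi)\subseteq N$ (for instance the normal closure of $\Van(\chi)$, which is proper whenever it does not coincide with $G$), yielding $\chi_{N}\in\Irr(N)$.  Gallagher's theorem then produces irreducible characters $\chi\beta$ with $\beta\in\Irr(G/N)$ of degree coprime to $\psi(1)$, whose vanishing sets should provably lie outside $\Van(\psi)$, contradicting $\Van(\chi)\subseteq\Van(\psi)$.
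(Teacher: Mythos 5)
Your proposal does not close, and neither of your two routes is likely to. The central difficulty with the B\'ezout route is that everything you actually compute --- $\theta(1)=1$, $\langle\theta,\theta\rangle=a^{2}+b^{2}\geq 2$, and the decomposition of $\theta\overline{\theta}$ --- holds for \emph{every} pair of distinct non-linear irreducible characters of coprime degrees; the hypothesis $\Van(\chi)\subseteq\Van(\psi)$ enters only through the remark that $\theta$ vanishes on $\Van(\chi)$ (which it does on $\Van(\chi)\cap\Van(\psi)$ in any case), and you never convert this into an inequality, congruence, or counting statement. Since coprime-degree pairs exist in abundance, no contradiction can come out of the bookkeeping alone. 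Moreover, the step you yourself flag as hardest rests on a non-fact: irreducible constituents of $\chi\overline{\psi}$ need not have degree dividing $\gcd(\chi(1),\psi(1))$, and even a degree-one constituent would be some linear character, not necessarily $\mathbf{1}_{G}$, so $\langle\chi\overline{\psi},\mathbf{1}_{G}\rangle=0$ would not be contradicted. Your fallback route fails at its first step: the normal closure of $\Van(\chi)$ is typically all of $G$ (already for non-linear characters of $\SSS_{n}$), so there is no proper normal $N$ containing $\Van(\chi)$ to which Lemma \ref{res} could be applied.

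The paper's proof is short and uses exactly the idea your proposal is missing: \emph{local} information at a single prime. Let $\pi$ be the set of primes dividing $\chi(1)$, so that $\psi(1)$ is a $\pi'$-number. By the Malle--Navarro--Olsson theorem \cite{mno}, $\chi$ vanishes at some element $x$ of prime-power order, say $|x|=p^{k}$; since a character of $p'$-degree never vanishes on a $p$-element (Corollary 4.20 of \cite{nav}), necessarily $p\mid\chi(1)$, hence $p\nmid\psi(1)$. If $\Van(\chi)\subseteq\Van(\psi)$, then $\psi(x)=0$, contradicting that same corollary applied to $\psi$. Note finally that, as stated, the proposition (and either argument) tacitly assumes $\chi$ and $\psi$ are non-linear, since the vanishing set of a linear character is empty.
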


\begin{proof}
By symmetry, it suffices to prove that
$\Van(\chi)\not\subseteq\Van(\psi)$. Let $\chi(1)$ is a $\pi$-number
for some set of primes $\pi$, so that  $\psi(1)$ is a $\pi'$-number.
By way of contradiction, suppose that
$\Van(\chi)\subseteq\Van(\psi)$. By \cite{mno}, there exists
$p\in\pi$ and $x\in G$ of $p$-power order such that $\chi(x)=0$. It
follows that $\psi(x)=0$. Note that $\psi$ has degree not divisible
by $p$. Therefore, Corollary 4.20 of \cite{nav} implies that
$\psi(x)\neq0$, a contradiction.
\end{proof}

In symmetric groups, more is true: two irreducible characters must
have the same degree if they have the same vanishing set, by Lemma
\ref{eq}. However, this is not the case in general, as shown by
$\SL_2(5)$ or $\PSL_2(11)$. One can find more counterexamples using
\cite{gap}, including solvable groups. These counterexamples suggest
that there should be more to say about the relationship between the
degrees of two irreducible characters having the same vanishing set.
This remains to be discovered. There is one important family of
groups among which we have found no counterexamples.

\begin{que}\label{1}
Let $G$ be a finite $p$-group. Suppose that $\chi,\psi\in\Irr(G)$
and $\Van(\chi)=\Van(\psi)$. Is it true that $\chi(1)=\psi(1)$?
\end{que}

This was communicated to one of us by J. Sangroniz years ago. We can
now show that Question \ref{1} has an affirmative answer when one of
the two characters has degree $p$. We begin with a general lemma.

\begin{lem}
\label{indp} Let $G$ be a finite group and  let $N\trianglelefteq G$. Suppose that
$|G:N|=p$ is prime. Then $\chi(x)=0$ for every $x\in G-N$ if and only if $\chi$ is
induced from $N$.
\end{lem}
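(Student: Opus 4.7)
The plan is to handle the two directions separately, with the forward direction (``induced implies vanishing'') being immediate from Frobenius' induction formula and the reverse direction relying on a short Clifford-theoretic computation.

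For the easy direction, suppose $\chi=\varphi^G$ for some character $\varphi$ of $N$. Because $N\trianglelefteq G$ and $|G:N|=p$, a coset representative $g$ satisfies $g^{-1}xg\in N$ if and only if $x\in N$. The standard formula
\[
\chi(x)=\frac{1}{|N|}\sum_{g\in G,\, g^{-1}xg\in N}\varphi(g^{-1}xg)
\]
therefore vanishes on $G-N$. No cleverness needed here.

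For the harder direction, I would exploit the hypothesis that $\chi$ vanishes on $G-N$ to compute the restriction norm
\[
[\chi_N,\chi_N]_N=\frac{1}{|N|}\sum_{x\in N}|\chi(x)|^2=\frac{1}{|N|}\sum_{x\in G}|\chi(x)|^2=\frac{|G|}{|N|}[\chi,\chi]_G=p.
\]
By Clifford's theorem, $\chi_N=e(\theta_1+\cdots+\theta_t)$ where the $\theta_i\in\Irr(N)$ are the distinct $G$-conjugates of some $\theta\in\Irr(N)$ and $e$ is the ramification index; hence $[\chi_N,\chi_N]_N=e^2 t$. From $e^2t=p$ and $p$ prime I conclude $e=1$ and $t=p$. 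Since the number of $G$-conjugates of $\theta$ equals $|G:I_G(\theta)|=p=|G:N|$ and $N\leq I_G(\theta)$, the inertia group must be $N$. The Clifford correspondence then makes $\theta^G$ irreducible and lying over $\theta$, and the unique such character is $\chi$, so $\chi=\theta^G$.

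There is really no serious obstacle: the whole argument rides on the computation $[\chi_N,\chi_N]_N=p$, which in turn is a direct consequence of the vanishing hypothesis together with $|G:N|=p$. The only thing to be careful about is that $e^2t=p$ with $p$ prime forces $e=1$ rather than $e=p,t=1$, since $e=p$ would give $e^2t=p^2$. Once this numerical squeeze is in place, Clifford correspondence closes the argument immediately.
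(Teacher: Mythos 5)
Your proof is correct and rests on the same key computation as the paper's: the vanishing hypothesis gives $[\chi_N,\chi_N]=p[\chi,\chi]=p$. The paper packages the conclusion slightly differently --- it invokes the prime-index dichotomy (\cite[Corollary 6.19]{isa}: either $\chi_N\in\Irr(N)$ or $\chi$ is induced from $N$) and derives the contradiction $1=[\chi,\chi]=1/p$ in the first case, whereas you run the Clifford decomposition $e^2t=p$ directly to pin down $I_G(\theta)=N$; both routes are sound and essentially equivalent.
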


\begin{proof}
The result is clear if $\chi$ is induced from $N$. Now, assume that
$\chi(x)=0$ for every $x\in G-N$.
We want to see that $\chi$ is induced from $N$.
Assume not. Then $\chi_N\in\Irr(N)$ by \cite[Corollary 6.19]{isa}. Hence
\begin{align*}
1=&[\chi,\chi]=\frac{1}{|G|}\sum_{g\in G}|\chi(g)|^2=
\frac{1}{|G|}\sum_{g\in N}|\chi(g)|^2\\
=&(1/p)\frac{1}{|N|}\sum_{g\in
N}|\chi(g)|^2=(1/p)[\chi_N,\chi_N]=1/p,
\end{align*} which is a
contradiction.
\end{proof}

As usual, if $G$ is a group $\bZ_2(G)$ is the subgroup of $G$ such
that $\bZ(G/\bZ(G))=\bZ_2(G)/\bZ(G)$.

\begin{lem}
\label{zer} Let $G$ be a finite $p$-group and $N$ be a proper normal subgroup of  $G$.
Suppose that there exists $\delta\in\Irr(N)$ such that
$\chi=\delta^G\in\Irr(G)$. Then $\Van(\chi)\cap N\neq\emptyset$.
\end{lem}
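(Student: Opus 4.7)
The plan is to produce an explicit zero of $\chi$ in $N$ by evaluating $\chi$ at a carefully chosen element of $\bZ_2(G) \cap N$, after reducing to a situation in which the central character $\omega_\delta$ of $\delta$ is faithful on $\bZ(G)$.

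First I would record the structural facts that supply the element. The induced character formula forces $\chi$ to vanish on $G-N$, while $\chi(z)=\chi(1)\omega_\chi(z)\neq 0$ for central $z$ shows $\bZ(G)\leq N$. Equality $N=\bZ(G)$ would make $\delta$ automatically $G$-invariant, contradicting $\chi=\delta^G\in\Irr(G)$ together with $|G:N|>1$, hence $\bZ(G)\lneq N$. Applying the standard fact that a nontrivial normal subgroup of a $p$-group meets its center nontrivially to $N/\bZ(G)\trianglelefteq G/\bZ(G)$ then yields some $x\in(\bZ_2(G)\cap N)\setminus\bZ(G)$. Setting $K:=\ker\omega_\delta\cap\bZ(G)$, one checks $K\leq\ker\chi$, so passing to $G/K$ reduces the problem to the case where $\omega_\delta$ is faithful on the (now cyclic) group $\bZ(G)$; zeros of $\chi$ in $N$ correspond bijectively to zeros of the induced quotient character in $N/K$.

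Next I would compute $\chi(x)$ directly. For each $h\in G$, $x^h=x[x,h]$ with $[x,h]\in\bZ(G)\leq\bZ(N)$, so centrality in $N$ gives $\delta(x^h)=\omega_\delta([x,h])\delta(x)$, and thus
\[
\chi(x)=\delta(x)\sum_{h\in[G/N]}\omega_\delta([x,h]).
\]
If $\delta(x)=0$ we are done. Otherwise the class-function identity $\delta(x^{hn})=\delta(x^h)$ for $n\in N$ forces $\omega_\delta([x,n])=1$ for every $n\in N$, which in the faithful setting means $[x,N]=1$, i.e.\ $x\in\bZ(N)$. The map $\psi_x(g):=\omega_\delta([x,g])$ is then a linear character of $G$ (as $[x,-]\colon G\to\bZ(G)$ is a homomorphism when $x\in\bZ_2(G)$), trivial on $N$ but nontrivial on $G$, because $[x,G]$ is a nontrivial subgroup of the cyclic group $\bZ(G)$ on which the faithful $\omega_\delta$ cannot vanish. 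Hence $\psi_x$ descends to a nontrivial character of the abelian quotient $G/N$, its sum over $G/N$ vanishes by orthogonality, and $\chi(x)=0$.

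The main delicate step will be the class-function consistency argument that pins down $[x,N]=1$ in the non-vanishing case; it is what lets the two subcases merge and is precisely the reason for the reduction to faithful $\omega_\delta$.
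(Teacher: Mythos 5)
Your overall strategy is the same as the paper's: reduce to a faithful situation, produce a noncentral element $x\in\bZ_2(G)\cap N$, and evaluate $\chi(x)$ via $\chi(x)=\delta(x)\sum_{h}\omega_\delta([x,h])$ (the paper simply cites the end of the proof of Theorem C of Moret\'o--Sangroniz for this last computation, which you have correctly reconstructed). The computation itself is sound, including the case split on $\delta(x)=0$ and the orthogonality argument. However, your reduction step has a genuine gap: passing to $G/K$ with $K=\ker\omega_\delta\cap\bZ(G)$ does \emph{not} put you in the case where $\omega_\delta$ is faithful on the center, because the center of $G/K$ may be strictly larger than $\bZ(G)/K$ and $\omega_\delta$ may be trivial on the extra part. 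Worse, the element $x$ chosen beforehand may become central in $G/K$, i.e.\ one may have $[x,G]\leq K$ (equivalently $x\in Z(\chi)$); then $\psi_x$ is trivial, the sum equals $|G:N|$, and $\chi(x)=\delta(x)\,|G:N|\neq 0$, so no zero is produced. This really happens: take $G=G_0\times A$ with $G_0$ and $A$ extraspecial of order $p^3$, $N=N_0\times A$ with $N_0$ maximal in $G_0$, and $\delta=\delta_0\times 1_A$ with $\delta_0$ linear and nontrivial on $\bZ(G_0)$; then $x=(1,a)$ with $a\in A\setminus\bZ(A)$ is a legitimate choice by your first paragraph, yet $\chi(x)=p$.

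The repair is exactly the paper's opening move: factor out the full kernel $\ker\chi$ rather than $K$. One has $\ker\chi=\bigcap_{g}(\ker\delta)^g\leq\ker\delta\leq N$, so the hypotheses descend to $G/\ker\chi$, and a zero of the quotient character in $N/\ker\chi$ lifts to a zero of $\chi$ in $N$. Once $\chi$ is faithful, $Z(\chi)=\bZ(G)$ and $\ker\omega_\delta\cap\bZ(G)=\ker\chi\cap\bZ(G)=1$, so choosing $x$ noncentral \emph{after} this reduction guarantees both that $\omega_\delta$ is faithful where the commutators $[x,g]$ live and that $\omega_\delta$ is nontrivial on the nontrivial subgroup $[x,G]$; your computation then goes through verbatim. (Two cosmetic points: $G/N$ need not be abelian, but orthogonality of the nontrivial linear character $\psi_x$ against the trivial one suffices; and cyclicity of $\bZ(G)$ is never actually used.)
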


\begin{proof}
Without loss of generality, we may assume that $\chi$ is faithful.
Note that $\bZ(G)<N$. Therefore, $N/\bZ(G)\cap\bZ_2(G)/\bZ(G)>1$.
 Thus, there exists
a noncentral element $x\in N\cap\bZ_2(G)$. Now, a similar argument
as at the end of the proof of \cite[Theorem C]{ms} shows that
$\chi(x)=0$.
\end{proof}

Now,  we are ready to prove the promised result.

\begin{pro}
Let $G$ be a finite $p$-group. Suppose that $\chi,\psi\in\Irr(G)$
and $\Van(\chi)=\Van(\psi)$. If $\chi(1)=p$, then $\psi(1)=p$.
\end{pro}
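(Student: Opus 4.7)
I would reduce, via Lemma \ref{indp} and $p$-group monomiality, to showing that a certain character of an index-$p$ subgroup is linear, and then argue by induction on $|G|$.

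Since $G$ is a $p$-group and $\chi(1) = p$, monomiality yields $\chi = \lambda^G$ for some linear $\lambda \in \Irr(H)$, where $H \trianglelefteq G$ has index $p$ (every index-$p$ subgroup of a $p$-group being automatically normal). By Lemma \ref{indp}, $\chi$ vanishes on $G \setminus H$; since $\Van(\chi) = \Van(\psi)$, the same holds for $\psi$, and a second application of Lemma \ref{indp} gives $\psi = \mu^G$ for some $\mu \in \Irr(H)$. Hence $\psi(1) = p\mu(1)$, and it suffices to prove $\mu(1) = 1$. A key structural observation is that $\lambda$ being linear forces $H' \leq \ker\lambda$, whence $H' \leq \ker\chi$ (the $G$-core of $\ker\lambda$), so $H/\ker\chi$ is an abelian $p$-group.

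I would then proceed by induction on $|G|$. For the base case $|G| = p^3$, every non-linear irreducible character of the (necessarily non-abelian) group $G$ has degree $p$, and $\psi$ must be non-linear (else $\Van(\psi) = \emptyset \neq \Van(\chi)$), so $\psi(1) = p$. For the inductive step, set $N := \ker\chi \cap \ker\psi$. When $N > 1$, both $\chi$ and $\psi$ descend to irreducible characters of $G/N$ with the same vanishing set (as $\Van(\chi)$ is a union of $\ker\chi$-cosets, hence of $N$-cosets) and $\chi(1) = p$ preserved, so the inductive hypothesis applied in $G/N$ concludes.

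The remaining case $\ker\chi \cap \ker\psi = 1$ is the hard one. Assume for contradiction $\mu(1) > 1$; then $H$ cannot be abelian (as all irreducible characters of an abelian group are linear), so $H' > 1$. Since $H' \leq \ker\chi$ and $\ker\chi \cap \ker\psi = 1$, we have $H' \cap \ker\psi = 1$. Moreover, $\chi(g) = p \neq 0$ for every $g \in H'$ forces $\psi(g) \neq 0$. Consequently, $\psi|_{H'}$ is a faithful, everywhere non-vanishing character of the nontrivial $p$-group $H'$. The main obstacle is to exploit the Clifford decomposition $\psi|_{H'} = \sum_{i=0}^{p-1} (\mu|_{H'})^{t^i}$ (where $t$ is any element of $G \setminus H$) together with the bound $\mu(1) \geq p$ to rule out such a character on $H'$, thereby forcing $H' = 1$ and $\mu$ linear. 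I expect this final step to require a careful analysis of the values of $\mu$ across $G$-orbits inside $\ker\chi$, possibly in combination with Lemma \ref{zer}.
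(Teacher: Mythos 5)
Your opening reduction is correct and matches the spirit of the paper's first step: write $\chi=\lambda^G$ with $\lambda$ linear on a normal subgroup $H$ of index $p$, apply Lemma \ref{indp} twice to get $\psi=\mu^G$ with $\mu\in\Irr(H)$, and reduce to showing $\mu(1)=1$; the inductive step via $N=\ker\chi\cap\ker\psi>1$ is also sound. But the proposal stops exactly where the proof becomes nontrivial. In the case $\ker\chi\cap\ker\psi=1$ you only record that $\psi_{H'}$ would be a faithful, nowhere-vanishing character of the nontrivial $p$-group $H'$, and you explicitly defer the contradiction to a ``careful analysis'' that is never carried out. This is a genuine gap, not a routine verification: faithful nowhere-vanishing characters of nontrivial $p$-groups exist in abundance (for instance, on a cyclic group, a faithful linear character plus several copies of the trivial one), so faithfulness plus non-vanishing on $H'$ is not by itself contradictory. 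It is also unclear that what you have retained --- a single subgroup $H$ with only the containment $\Van(\chi)\supseteq G-H$ rather than an exact description of $\Van(\chi)$ --- is enough information to run such an argument.

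The paper closes this gap by a different route. Instead of fixing one $H$, it first arranges (via Lemma \ref{indp} and the induction) that $\chi$ and $\psi$ are both induced from \emph{every} maximal subgroup, then takes $N\trianglelefteq G$ with $\bZ(G)\leq N$ and $G/N$ elementary abelian of order $p^2$, shows the constituent $\nu$ of $\chi_N$ is $G$-invariant and that $\chi$ is fully ramified over $N$ with $\nu$ linear, and thereby pins down $\Van(\chi)=G-N$ \emph{exactly}. With $\Van(\psi)=G-N$ known precisely, the three possibilities for the inertia group of a constituent $\delta$ of $\psi_N$ are handled using Burnside's theorem (non-vanishing on $N$ forces $\delta$ linear in the fully ramified case) and Lemma \ref{zer} (a character of a $p$-group induced from a proper normal subgroup must vanish somewhere inside that subgroup). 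To complete your version of the hard case you would need to supply an argument of comparable strength, and the natural way to do so is essentially to switch to the paper's setup.
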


\begin{proof}
We argue by induction on $|G|$. Suppose first that there exists $M$
maximal in $G$ such that $\chi_M,\psi_M\in\Irr(M)$. Then the result
follows from the inductive hypothesis.

Now, let $M$ be a maximal subgroup of $G$. The hypothesis
$\Van(\chi)=\Van(\psi)$ and Lemma~\ref{indp} imply that one of the
characters is induced from $M$ if and only if the other character is
also induced from $M$. Therefore, we may assume that for any $M$
maximal in $G$, both $\chi$ and $\psi$ are induced from $M$.

Let $\bZ(G)\leq N\trianglelefteq G$ such that $G/N$ is elementary
abelian of order $p^2$. Let $\nu\in\Irr(N)$ lying under $\chi$.  Let
$T:=I_G(\nu)$. Since $\chi(1)=p$, Clifford's correspondence implies
that $N<T$. Therefore, if $T<G$ then $T$ is a maximal subgroup of
$G$. Write $G/N=T/N\times U/N$ for some $U$ maximal in $G$. Since
$I_U(\nu)=N$, $\nu$ induces irreducibly to $\mu\in\Irr(U)$. Note
that $\nu$ lies under  $\chi$, so by comparing degrees, we have
$\chi_U=\mu$, and this is a contradiction. It follows that $\nu$ is
$G$-invariant. The previous paragraph also implies that
$G-N\subseteq\Van(\chi)=\Van(\psi)$. Now, by \cite[Problem
6.3]{isa}, $\chi$ is fully ramified with respect to $G/N$. In
particular, since $\nu$ is linear, we conclude that $\chi$ has no
zeros in $N$, whence
$$G-N=\Van(\chi)=\Van(\psi).$$

Now, let $\delta\in\Irr(N)$ lying under $\psi$. Suppose first that
$L:=I_G(\delta)$ is maximal in $G$. It follows from the Clifford
theory that $\psi(1)/\delta(1)=p$. Write $G/N=L/N\times V/N$ for
some $V$ maximal in $G$, so that $\eta=\delta^V\in\Irr(V)$ lies
under $\psi$. We conclude that $\chi_V=\eta$. By Lemma~\ref{indp},
$G-V\not\subseteq\Van(\psi)$, a contradiction.

Now, assume that $L=G$. Using \cite[Problem 6.3]{isa} again, we see
that $\psi$ is fully ramified with respect to $G/N$. Therefore,
$\psi_N=p\delta$. Since $\Van(\psi)\cap N=\emptyset$, it follows
that $\delta$ is linear, by Burnside's theorem. The result follows
in this case too.

Finally, assume that $L=N$, so that $\psi=\delta^G$. By Lemma
\ref{zer}, $\psi$ has some zero in $N$. This is the final
contradiction.
\end{proof}

As a concluding remark, we do not consider in this paper the number
of conjugacy classes on which two certain irreducible characters
vanish simultaneously, or the number of irreducible characters
sharing a certain common zero. We do think that this topic deserves
further attention.


\end{document}